\numberwithin{equation}{section}
\newcommand{\ols}[1]{\mskip.5\thinmuskip\overline{\mskip-.5\thinmuskip {#1} \mskip-.5\thinmuskip}\mskip.5\thinmuskip} 
\newcommand\dD{\mathrm{d}}
\def\eps{\varepsilon }
\newcommand{\sign}{{\text{\rm sgn }}}
\newcommand{\mueq}{\cM_{\rho_0/\eps,\cV}\otimes\bar{\mu}_{t,\bx}}
\newcommand\br{\begin{remark}}
\newcommand\er{\end{remark}}
\newcommand\bp{\begin{pmatrix}}
\newcommand\ep{\end{pmatrix}}
\newcommand{\be}{\begin{equation}}
\newcommand{\ee}{\end{equation}}
\newcommand\ba{\begin{equation}\begin{aligned}}
\newcommand\ea{\end{aligned}\end{equation}}
\newcommand\ds{\displaystyle}
\newcommand{\beg}{\begin{example}}
\newcommand{\eeg}{\end{exaplem}}
\newcommand{\bpr}{\begin{proposition}}
\newcommand{\epr}{\end{proposition}}
\newcommand{\bt}{\begin{theorem}}
\newcommand{\et}{\end{theorem}}
\newcommand{\bc}{\begin{corollary}}
\newcommand{\ec}{\end{corollary}}
\newcommand{\bl}{\begin{lemma}}
\newcommand{\el}{\end{lemma}}
\newcommand{\bd}{\begin{definition}}
\newcommand{\ed}{\end{definition}}
\newcommand{\brs}{\begin{remarks}}
\newcommand{\ers}{\end{remarks}}
\newtheorem{theorem}{Theorem}[section]
\newtheorem{proposition}[theorem]{Proposition}
\newtheorem{corollary}[theorem]{Corollary}
\newtheorem{lemma}[theorem]{Lemma}
\newtheorem{remark}[theorem]{Remark}
\newtheorem{definition}[theorem]{Definition}
\newtheorem{example}[theorem]{Example}
\newcommand{\R}{{\mathbb R}}
\newcommand\bx{{\bm x}}
\newcommand{\by}{\bm y}
\newcommand{\bu}{\bm u}
\newcommand\bfu{{\bm u}}
\newcommand\cA{{\mathcal A}}
\newcommand\cB{{\mathcal B}}
\newcommand\cD{{\mathcal D}}
\newcommand\cE{{\mathcal E}}
\newcommand\cH{{\mathcal H}}
\newcommand\cI{{\mathcal I}}
\newcommand\cJ{{\mathcal J}}
\newcommand\cK{{\mathcal K}}
\newcommand\cL{{\mathcal L}}
\newcommand\cM{{\mathcal M}}
\newcommand\cU{{\mathcal U}}
\newcommand\cV{{\mathcal V}}
\newcommand\cW{{\mathcal W}}
\newcommand\scP{{\mathscr P}}
\newcommand\scC{{\mathscr C}}
\numberwithin{equation}{section}
\title{Concentration phenomena in FitzHugh-Nagumo's equations: a mesoscopic approach}
\date {\today}
\author{Alain Blaustein and Francis Filbet}
\begin{document}
\maketitle

\centerline{\scshape Alain Blaustein}
{\footnotesize
    \centerline{Institut de Math\'ematiques de Toulouse, Universit\'e Paul Sabatier}
    \centerline{Toulouse, France, \email{alain.blaustein@math.univ-toulouse.fr}}

}

\medskip
\centerline{\scshape Francis Filbet}
{\footnotesize
    \centerline{Institut de Math\'ematiques de Toulouse, Universit\'e Paul Sabatier}
    \centerline{Toulouse, France, \email{francis.filbet@math.univ-toulouse.fr}}

}

\bigskip

\begin{abstract}
We consider a spatially extended mesoscopic FitzHugh-Nagumo model with strong local interactions and prove that its asymptotic limit
converges towards the classical nonlocal reaction-diffusion FitzHugh-Nagumo system.
As the local interactions strongly dominate, the weak solution to the mesoscopic equation under consideration converges to the local equilibrium, which has the form of Dirac distribution concentrated to an averaged membrane potential.

Our approach is based on techniques widely developed in kinetic theory (Wasserstein distance, relative entropy method), where macroscopic quantities of the mesoscopic model are compared with the solution to the  nonlocal reaction-diffusion system. This approach allows to make the rigorous link between microscopic and reaction-diffusion models.
\end{abstract}

\vspace{0.5cm}
\noindent
\textit{Keywords:}
FitzHugh-Nagumo system; Wasserstein distance; blow-up profile.
\\

\noindent
\textit{2010MSC:}
35K57;
35Q92;
35D30.


\section{Introduction}
\label{sec:1}
\subsection{Physical model and motivations}
Neuron models often focus on the dynamics of the electrical potential through the membrane of a nerve cell. These dynamics are driven by ionic exchanges between the neuron and its environment through its cellular membranes. A very precise modeling of these ion exchanges led to the well-known Hodgkin-Huxley model \cite{HH}. In this paper, we shall focus on a simplified version, called the FitzHugh-Nagumo model \cite{FHN1} \cite{FHN2}, which keeps its most valuable aspects and remains relatively simple mathematically. More precisely, the FitzHugh-Nagumo model accounts for the variations of the membrane potential {$v\in\R$} of a neuron coupled to an auxiliary variable {$w\in\R$} called the adaptation variable. It is usually written as follows:
\begin{equation*}
\left\{
    \begin{array}{lll}
        & \displaystyle \dD v_t\,=\,\left(N(v_t)\,-\,w_t \,+\, 
      I_{ext}\right)\,\dD t  \,+\, \sqrt{2}\,\dD B_t\,,
      \\[0.8em]
        & \displaystyle \dD w_t\,=\,A\left(v_t,w_t\right)\,\dD t\,,\\
    \end{array}
\right.
\end{equation*}
where the drift $N$ is a confining non-linearity with the following typical form
\[
N(v)\,=\, v \,-\, v^3\,,
\] 
even though a broader class of drifts $N$ is considered here. On the other hand, the drift $A$ is an affine mapping that has the following form 
\begin{equation*}
A(v,w) \,=\, a\,v \,-\, b\,w \,+\, c\,,
\end{equation*} 
where $a$, $c \in \R$ and $b>0$, which means that $A$  also has some confining properties. Here, the Brownian motion $B_t$ has been added in order to take into account random fluctuations in the dynamics of the membrane potential $v_t$. Another mathematical reason for looking at this system is that it is a prototypical model of excitable kinetics. Interest in such systems stems from the fact that although the kinetics are relatively simple, couplings between neurons can produce complex dynamics, where well-known examples are the propagation of excitatory pulses, spiral waves in two-dimensions, and spatio-temporal chaos. Here, we introduce coupling through the input current $I_{ext}$. More specifically, we consider that neurons interact with one another following Ohm's law and that the conductance between two neurons depends on there spatial location $\bx \in K$, where $K$ is a compact set of $\R^d$. The conductance between two neurons is given by a connectivity kernel $\Phi : K\times K \rightarrow \R$. Hence, in the case of a network composed with $n$ interacting neurons described by the triplet voltage-adaptation-position $(v_i,w_i,\bx_i)_{1\leq i \leq n}$, the current received by neuron $i$ from the other neurons is given by
\[
I_{ext}
\,=\,
-\frac{1}{n}\sum_{j=1}^n \Phi(\bx_i,\bx_j)\,(v^i_t-v^j_t)\,,
\]
where the scaling parameter $n$ is introduced here to re-normalize the contribution of each neuron. According to the former discussion, a neural network of size $n$ is described by the system of equations
\begin{equation*}
\left\{
    \begin{array}{lll}
      & \displaystyle \dD v^i_t\,=\,\left(N(v^i_t)\,-\,w^i_t \,-\,\frac{1}{n}\sum_{j=1}^n \Phi(\bx_i,\bx_j)\,(v^i_t-v^j_t)  \right)\,\dD t  \,+\, \sqrt{2}\,\dD B^i_t\,,
      \\[2em]
        & \displaystyle \dD w^i_t\,=\,A\left(v^i_t,w^i_t\right)\,\dD t\,,
    \end{array}
\right.
\end{equation*}
where $i \in \{1,...,n\}$.  In the formal limit $n\rightarrow + \infty$, the behavior of the latter system may be described by the evolution of a distribution function $f:= f(t,\,\bx,\bu)$, with $\bu=(v,w)\in\R^2$, representing the density of neurons at time $t$, position $\bx\in K$  with a membrane potential $v$ and adaptation variable $w\in \R$. It turns out that the distribution function $f$ solves the following mean-field  equation
\begin{equation*}
        \displaystyle \partial_t f
        \,+\,
        \partial_v 
        \left( 
        \left(N(v)
        \,-\,w
        \,-\, \mathcal{K}_{\Phi}[f]\right)\, f\right) \,+\,
        \partial_w
        \left( 
        A(v,w)f\right) 
        \,-\,
        \partial_v^2 f\,=\,0\,,
\end{equation*}
where the operator $\mathcal{K}_{\Phi}[f]$ takes into account spatial interactions and  is given by
\[
\mathcal{K}_{\Phi}[f](t,\,\bx,v)
\,=\,
\int_{K\times\R^{2}}
\Phi(\bx,\bx')\, (v-v')\,f(t,\,\bx',\bu')
\dD\bx'\,\dD \bu'\,.
\]
See for instance \cite{mlimit1, mlimit2, mlimit3, mlimit4} for more details on the  mean field limit for the FitzHugh-Nagumo system and \cite{mlimit_bolley} for a related model in collective dynamics.\\

Various other types of kinetic models have been derived during the past decades depending on the hypotheses assumed for the dynamics of the emission of an action potential. They include for example integrate-and-fire neural networks \cite{brunel,CCP,CPSS} and time-elapsed neuronal models \cite{PPS,CCDR,CHE1,CHE}.

Let us be more specific on the modeling of interactions between neurons. A common assumption consists in considering that {there are two types} of interactions: strong short range interactions and weak long range interactions (see \cite{assumption Phi}, \cite{HJ quininao/touboul} \& \cite{mlimit4}). Here we consider a connectivity kernel of the following type
\begin{equation}\label{def:phi:eps}
\Phi^\eps(\bx,\bx')
\,=\,
\frac{1}{\eps}
\delta_0(\bx-\bx')\,+\, \Psi(\bx,\bx')\,,
\end{equation}
where the Dirac mass $\delta_0$ accounts for strong short range interactions with strength $\eps>0$, whereas the connectivity kernel $\Psi : K \times K \rightarrow \R$ is more regular and represents weak long range interactions.

The purpose of this article is to go through the mathematical analysis of the neural network in the regime of strong interactions, that is when $\eps \ll 1$. More precisely, we prove that the voltage distribution concentrates to a Dirac mass by providing a comprehensive description of this concentration phenomenon. 

\subsection{Formal derivation}
Our problem is multiscale due to interactions between neurons, which induce macroscopic effects at the mesoscopic level.
Consequently, we introduce integrated quantities. First, we consider the spatial distribution of neurons throughout the network
\begin{equation*}
\rho_0^\eps(\bx)
=
\int_{\R^2}f^\eps(t,\,\bx,\bu)\,\dD \bu\,.
\end{equation*}
It is straightforward to check that $\rho_0^\eps$ is indeed
time-homogeneous, integrating the mean field equation with respect to
$\bu\in\R^2$. Second, we introduce the averaged  voltage $\cV^\eps$
and adaptation variable $\cW^\eps$  at a spatial location $\bx$
\begin{equation}\label{macro:q}
\left\{
    \begin{array}{ll}
        \displaystyle \rho_0^\eps
        \left(
        \bx
        \right)\,\cV^\eps(t,\,\bx) 
        &\,=\,
        \ds\int_{\R^2}v~f^\eps(t,\,\bx,\bu)\,\dD\bu\,,
        \\[1.1em]
        \displaystyle \rho_0^\eps
        \left(
        \bx
        \right)\,\cW^\eps(t,\,\bx) 
        &\,=\,
        \ds\int_{\R^2}w~f^\eps(t,\,\bx,\bu)\,\dD\bu\,.
    \end{array}
\right.
\end{equation}
In the sequel, we use the vector notation 
$\ds \mathcal{U}^\eps 
\,=\, 
\left(\,
\mathcal{V}^\eps ,
\mathcal{W}^\eps\, 
\right)
$. 
At the mesoscopic level, we compare probability density functions using the Wasserstein distances. Hence, we renormalize $f^\eps$ as
\[
\rho_0^\eps \,
\mu^\eps
\,=\,
f^\eps\,,
\]
where $\mu^\eps$ is a non-negative function which lies in 
$
\ds
\scC^0
\left(
\R^+
\times 
K\,,\,
L^1
\left(
\R^2
\right)
\right)
$ and verifies 
\[
\int_{\R^2}
\,
\mu^\eps(t,\,\bx,\bu)\,\dD \bu
\,=\,
1
,~~
\forall
\left(
t,\,\bx
\right)
\,\in\,
\R^+
\times 
K\,.\]
Consequently, we denote 
$
\ds
\mu^\eps_{t,\,\bx}
$
the probability density function defined as
$
\ds
\mu^\eps_{t,\,\bx}
\,=\,
\mu^\eps
\left(
t,\,\bx,\cdot
\right)
$.\\
With these notations and our modeling assumptions on the connectivity kernel $\Phi^\eps$ defined by \eqref{def:phi:eps}, the mean-field equation rewrites
\begin{equation}
  \label{kinetic:eq}
  \ds\partial_t \, \mu^\eps
        \,+\,
        \mathrm{div}_{\bu}
        \left[ \,
        \mathbf{b}^\eps
         \mu^\eps\,
         \right] 
       \,-\,
       \partial^2_v \,
        \mu^\eps
        \,=\,
        \frac{1}{\eps}\,
        \rho^\eps_0\,
        \partial_v 
        \left[\,
        (v-\cV^\eps)
        \,\mu^\eps\,
        \right]
        ,
\end{equation}
where $\mathbf{b}^\eps$ is defined for all 
$
\ds
\left(
t,\,\bx,\bu
\right)
\in
\R^+
\times
K
\times 
\R^2
$
as
\begin{equation*}
\mathbf{b}^\eps(t,\,\bx,\bu)
\,=\,
\begin{pmatrix}
\ds\,
N(v)
\,-\,w \,-\,
\mathcal{K}_{\Psi}
[\rho_0^\eps \,\mu^\eps]
\left(
t,\,\bx,v
\right)\,\\[0,9em]
\ds
A(\bu)
\end{pmatrix}
\,.
\end{equation*}
Furthermore, one can notice that the non local term $\ds\mathcal{K}_{\Psi}
        [\rho_0^\eps \,\mu^\eps]$ can be expressed in terms of the macroscopic quantities 
\[
\mathcal{K}_{\Psi}
        [\rho_0^\eps \,\mu^\eps](t,\,\bx,v)
\,=\,
\Psi*_r\rho^\eps_0(\bx) \,v \,-\, \Psi*_r(\rho^\eps_0 \,\cV^\eps)(t,\,\bx)\,,
\]
where $*_r$ is a shorthand for the convolution on the right side of any function $g$ with $\Psi$
\[
\Psi*_r g(\bx)
\,=\,
\int_{K}
\Psi(\bx,\bx')\,g(\bx')\,\dD \bx'\,.
\]
Coming back to the analysis of the strong interaction regime, we look for the leading order in \eqref{kinetic:eq}. In our case, it is induced by strong short range interactions between neurons, and as $\eps\rightarrow 0$, we expect
$$
( v \,-\, \cV^\eps)\,
\mu^\eps\,=\, 0\,,
$$
which means that $\mu^\eps$ concentrates around its mean value with respect to the voltage variable at each spatial location $\bx$ in $K$, that is, $\mu^\eps$ converges to a Dirac mass centred in $\cV^\eps$. Thus, to quantify the asymptotic behavior of $\mu^\eps$ when $\eps\ll 1$,  we denote by $\scP_2(\R^2)$ the set of probability laws with finite second order moments
\[
\scP_2\left(\R^2\right)
\,=\,
\left\{
\mu \in \scP(\R^2), \quad
\int_{\R^2}
|\bu|^2
\dD\mu(\bu) \,<\,+\infty
\right\}
\]
and the Wasserstein distance of order two $W_2$ defined as follows: for any $\mu$ and $\nu$ probability measures in $\scP_2(\R^2)$,
\[
W^2_2\left(\mu,\nu\right)
\,=\,
\inf_{\pi \in \Pi(\mu,\nu)}
\int_{\R^4}
|\bu-\bu'|^2
\dD\pi(\bu,\bu')\,,
\]
where  $\Pi(\mu,\nu)$ stands for the set of distributions $\pi$ over $\R^4$ with marginals $\mu$ with respect to $\bu$ and $\nu$ with respect to $\bu'$, that is, for $\pi\in \Pi(\mu,\nu)$
$$
\pi
\left(\,
A,\,\R^2\,
\right) 
= 
\mu(A) \quad{\rm and}\quad \pi
\left(\,\R^2,\,A\,
\right) = \nu(A)\,, 
$$
for any Borel set $A\subset \R^2$.
{The choice of the $W_2$ metric in our analysis is somehow arbitrary as it might be possible to adapt our approach to other distances metrizing probability spaces. However,} it is easy to quantify the distance
between $\mu^\eps$ and a Dirac mass in this framework since we have

$$
W_2^2\left(\,\mu_{t,\,\bx}^\eps,\, \delta_{\cV^\eps} \otimes\, \bar{\mu}_{t,\,\bx}^\eps\,\right) \,=\, 
\int_{\R^2} \left|v-\cV^\eps \right|^2
\mu^\eps_{t,\,\bx}(\bu) \dD \bu\,,
$$
where $\bar{\mu}^\eps$ is defined as the marginal of $\mu^\eps$ with respect to the voltage variable
\begin{equation*}
\bar{\mu}^\eps_{t,\,\bx}(w)
\,=\,
\int_{\R}\mu^\eps_{t,\,\bx}(v,w)\,\dD v\,.
\end{equation*}
Hence, we  multiply equation \eqref{kinetic:eq} by $|v-\cV^\eps|^2$
and integrate with respect to  $\bu\in\R^2$. We obtain that at each
spatial location $\bx$ in $K$, we have {(see Proposition \ref{prop:2} below for a more precise estimate)}
\[
W_2\left(\,\mu_{t,\,\bx}^\eps,\, \delta_{\cV^\eps(t,\,\bx)} \otimes\, \bar{\mu}_{t,\,\bx}^\eps\,\right) \underset{\eps \rightarrow 0}{\sim}
\sqrt{\eps}\,.
\]
Considering this estimate, we infer that the dynamics of the network when $\eps \ll 1$ are driven by the couple 
$\ds(\cV^\eps,\bar{\mu}^\eps)$,
which displays both the
macroscopic \& the mesoscopic scale. We complete this step of our analysis by deriving the limit of $\ds(\cV^\eps,\bar{\mu}^\eps)$.
Multiplying equation \eqref{kinetic:eq} by $v$~(resp.~$1$) and then integrating over $\bu\in\R^2$~(resp.~$v\in\R$), we obtain that the couple $\ds(\cV^\eps,\bar{\mu}^\eps)$ solves the following system
\begin{equation}\label{macro-eps:eq}
    \left\{
    \begin{array}{llll}
        &\displaystyle \partial_t\cV^\eps \,=\, 
        N(\cV^\eps)
        \,-\,
        \cW^\eps
        \,-\,
       \mathcal{L}_{\rho^\eps_0}[\mathcal{V^\eps}]
        \,+\,
        \cE(\mu^\eps)\,
        ,\\[0.8em]
        &\displaystyle \partial_t \bar{\mu}^\eps 
        \,+\,
        \partial_w
        \left(
        a\int_{\R} v \mu^\eps \dD v 
        \,-\,b \,w\, \bar{\mu}^\eps 
        \,+\,
        c\, \bar{\mu}^\eps 
        \right) = 0\,,
  \end{array}
\right.
\end{equation}
with
\[
\cW^\eps
        \,=\,
        \int_{\R}w \,\bar{\mu}^\eps \, \dD w\,.
        \]
In equation \eqref{macro-eps:eq},  $\mathcal{L}_{\rho_0^\eps}[\cV^\eps]$ is a non local operator given by
\[
\mathcal{L}_{\rho_0^\eps}[\cV^\eps]
=
\cV^\eps \,\Psi*_r\rho_0^\eps \,-\, \Psi*_r(\rho_0^\eps \cV^\eps)\,,
\]
and  the error term $\cE(\mu^\eps)$ is given by
\begin{equation}\label{error}
\cE(\mu^\eps)
\,=\,
\,\int_{\R^2} N(v)\,
        \mu^\eps_{t,\,\bx}(\bu)\, \dD \bu\,-\, N(\cV^\eps)\,.
\end{equation}
Before computing the limit for 
$\ds(\cV^\eps,\bar{\mu}^\eps)$,
we emphasize that multiplying the second equation in \eqref{macro-eps:eq} by $w$ and  integrating with respect to $w\in\R$, we get a closed equation for $\cW^\eps$ since  $A$ is affine, 
\[
\partial_t \cW^\eps
\,=\, A\left(\cV^\eps, \cW^\eps
\right)\,.
\]
Both equations on $\cV^\eps$ and $\bar{\mu}^\eps$ in \eqref{macro-eps:eq} depend on the distribution function $\mu^\eps$. However since our interest here lies in the regime of strong interactions, we replace $\mu^\eps$ in \eqref{macro-eps:eq} by the \textit{ansatz} 
\[
\ds
\mu^\eps
\,\underset{\eps \rightarrow 0}{=}
\,
\delta_{\cV^\eps}
\otimes \bar{\mu}^\eps
\,+\,
O(\sqrt{\eps})\,.
\]
This removes the dependence with respect to $\mu^\eps$ from the system \eqref{macro-eps:eq}. Indeed, we obtain on the one hand
(see Proposition \ref{estimate for the error 2} for more details)
\[
(\cV^\eps,\, \cW^\eps)\,
\underset{\eps \rightarrow 0}{=}\,\, (\cV,\,\cW) \,\,+\,\, O\left(\eps
\right)\,, \quad{\rm as}\quad \eps\rightarrow 0\,,
\]
and on the other hand
\[
\bar{\mu}^\eps
\underset{\eps \rightarrow 0}{=}\,\, 
\bar{\mu} \,\,+\,\, O\left(\sqrt{\eps}
\right)\,, \quad{\rm as}\quad \eps\rightarrow 0\,,
\]
where the couple $(\cV,\bar{\mu})$ solves the following system
\begin{equation}
  \label{macro:eq}
    \left\{
    \begin{array}{l}
        \displaystyle \partial_t\cV
        \,=\, 
        N(\cV)
        \,-\,
        \cW
        \,-\,
       \mathcal{L}_{\rho_0}[\cV]
        \,,\\[0.9em]
        \displaystyle \partial_t \bar{\mu}
        \,+\,
        \partial_w
        \left(
        A(\cV\,,\,w)\, \bar{\mu}
        \right) \,=\, 0\,,
    \end{array}
\right.
\end{equation}
with
$$
 \cW
        \,=\,
        \int_{\R}w\,\dD\bar{\mu}_{t,\,\bx}(w)\,.
        $$
Similar results have already been obtained in a deterministic setting in \cite{limite-hydro-crevat} using relative entropy methods. In the end,  it can be proven that $\mu^\eps$ converges to a mono-kinetic distribution in $v$ with mean $\cV$ and we get the following rate of convergence {(see Corollary \ref{order 0} for more details)}
\begin{equation*}
W_2\left(\mu^\eps_{t,\,\bx},\, \delta_{\cV(t,\,\bx)}
\otimes \bar{\mu}_{t,\,\bx}\right)
\underset{\eps \rightarrow 0}{=}
O(\sqrt{\eps})\,.
\end{equation*}
Actually, this latter convergence estimate corresponds to the expansion at order 0 of $f^\eps$ in the regime of strong interactions. 

\subsection{Introduction of rescaled variables}
In this paper, we introduce some rescaled variables, in order to study
more precisely the asymptotic behavior of the solution and  to improve the order of convergence. The strategy consists in finding the concentration's profile with respect to the potential variable $v$.  This leads to considering the following re-scaled version $\nu^\eps$ of $\mu^\eps$
\begin{equation*}
    \mu^\eps(t,\,\bx,v,w) 
    \,=\,
    \frac{1}{\eps^\alpha}\,
    \nu^\eps 
    \left(t,\,\bx,\,\frac{v-\cV^\eps}{\eps^\alpha},\,
    w-\cW^\eps
    \right)\,,
\end{equation*}
where $\eps^\alpha$ is the concentration rate of $\mu^\eps$ around its mean value $\cV^\eps$ and $\alpha$ needs to be determined. For a proper choice of $\alpha$, we expect $\nu^\eps$ to converge to some limit as $\eps$ vanishes. This limit will be interpreted as the concentration profile of the voltage's distribution throughout the network in the regime of strong interaction.

In order to determine the proper concentration exponent $\alpha$, we derive the equation solved~by~$\nu^\eps$. To this aim, we perform the following change of variable
\begin{equation}
  \label{change:var}
(v,\,w)\mapsto \left(\frac{v-\cV^\eps}{\eps^\alpha},\,w-\cW^\eps\right)
\end{equation}
in equation  \eqref{kinetic:eq} and use the first equation of \eqref{macro-eps:eq} on $\cV^\eps$. It yields
\begin{equation*}
\partial_t\, \nu^\eps
\,+\,
\mathrm{div}_{\bu}
\left[
\,\mathbf{b}^\eps_0\,\nu^\eps \right]
\,=\,
\frac{1}{\eps^{2\alpha}}
\partial_v
\left[\eps^{2\alpha-1}\,\rho_0^\eps
\,v\, \nu^\eps
+
\partial_v \,\nu^\eps
\right]\,,
\end{equation*}
where $\mathbf{b}^\eps_0$ is a centered version of $\mathbf{b}^\eps$ and is given by
\begin{align*}
\ds
\mathbf{b}^\eps_0
\left(
t,\,\bx,\,\bu
\right)
\,=\,
\begin{pmatrix}
\ds\,
\eps^{-\alpha}
\left(
\,
N(\cV^\eps
\,+\,
\eps^\alpha v)
\,-\,
N(\cV^\eps) 
\,-\,
w
\,-\,
\eps^\alpha v\, \Psi *_r \rho^\eps_0(\bx)
\,-\,
\mathcal{E}
\left(
\mu^\eps
\right)
\right)
\\[0,9em]
\ds
A_0(\eps^\alpha v,\, w)
\end{pmatrix}\,,
\end{align*}
 and where $A_0$ is the linear version of $A$
\begin{equation*}
A_0(\bu) = A(\bu)-A(\mathbf{0})\,.
\end{equation*}
It turns out that the only suitable value for $\alpha$ is $1/2$. Indeed, when $\ds \alpha$ is less than $1/2$, we check that  $\nu^\eps$ converges towards a Dirac mass, which means that the scaling is not precise enough. On the contrary, when
$\ds \alpha \,>\, 1/2$,  $\nu^\eps$ converges to $0$, which means that we "zoom in" too much. Hence we obtain the following equation 
\begin{equation}\label{nu:eq}
\partial_t\, \nu^\eps
\,+\,
\mathrm{div}_{\bu}
\left[\,
\mathbf{b}^\eps_0
\,\nu^\eps \,\right]
\,=\,
\frac{1}{\eps}\,
\partial_v
\left[\,\rho_0^\eps
\,v\, \nu^\eps
+
\partial_v \,\nu^\eps\,
\right]\,,
\end{equation}
where we take $\alpha=1/2$ in the definition of $\mathbf{b}^\eps_0$.
Keeping only the leading order, it yields that
\[
\nu^\eps_{t,\,\bx}(v,w) \underset{\eps \rightarrow 0}{=}
\cM_{\rho_0^\eps(\bx)}(v)\otimes \bar{\nu}^\eps_{t,\,\bx}(w) \,+\, O(\sqrt\eps)\,,
\]
where the Maxwellian $\cM_{\rho_0^\eps}$
is defined as
\begin{equation*}
    \mathcal{M}_{\rho_0^\eps }(v)=
    \sqrt{\frac{\rho_0^\eps}{2\pi}}\exp
    \left(
    -\frac{\rho_0^\eps \,|v|^2}{2}
    \right)\,,
\end{equation*}
whereas $\bar{\nu}^\eps$ is the marginal of $\nu^\eps$ with respect to the re-scaled adaptation variable
\begin{equation*}
\bar{\nu}^\eps_{t,\,\bx}(w)
\,=\,
\int_{\R}\nu^\eps_{t,\,\bx}(\bu)\,\dD v\,.
\end{equation*}
At this point, it is possible to answer our initial concern:
$\mu^\eps$ concentrates with Gaussian profile as $\ds\eps \rightarrow 0$. 
Then we complete the analysis by deriving the limit of $\bar{\nu}^\eps$ :  integrating  equation \eqref{nu:eq} with respect to the re-scaled voltage variable $v$, we obtain the equations solved by $\bar{\nu}^\eps$,
\begin{equation*}
\displaystyle \partial_t \bar{\nu}^\eps
        \,-\,b\,
        \partial_{w}
        \left( 
        w\, \bar{\nu}^\eps
        \right)\,=\,
        -a\,\sqrt{\eps}\,\partial_{w}\int_\R v\, \nu_{t,\,\bx}^\eps(\dD v, w)\,.
\end{equation*} 
Once again, the equation still depends on $\nu^\eps$ through the source term in the right-hand side. 
However we obtain that it is in fact of order $\eps$ when we replace $\nu^\eps$ with the following \textit{ansatz} 
\[
\nu^\eps 
\,
 \underset{\eps \rightarrow 0}{=}\,
\cM_{\rho_0^\eps}\otimes \bar{\nu}^\eps \,+\, O(\sqrt\eps)\,.\]
Consequently, we expect the following convergence
\begin{equation*}
  \bar{\nu}^\eps \,\underset{\eps \rightarrow 0}{=}\,
  \bar{\nu} \,  + O(\eps)
  \,,
\end{equation*}
where $\bar\nu$ solves the following linear transport equation
\begin{equation*}
\partial_t \bar\nu
        \,-\,b\,
        \partial_{w}
        \left( 
        w \,\bar\nu\right) \,=\, 0\,,
\end{equation*}
which corresponds to the same equation as \eqref{macro:eq} for $\bar\mu$ after inverting the change of variable \eqref{change:var}. \\ 
We come back to our initial problem, which consisted in building a precise model for the dynamics of $\mu^\eps$ in the regime of strong interactions. We invert the change of variable \eqref{change:var} and obtain in the end
\begin{equation*}
\ds
W_2\left(
\mu^\eps_{t,\,\bx},\,
\mathcal{M}_{\,
\frac{1}{\eps}\,\rho_0(\bx)\,,\,\mathcal{V}(t,\,\bx)}
\otimes
\bar{\mu}_{t,\,\bx}
\right)
\,
\underset{\eps \rightarrow 0}{=}\,
O(\eps)\,,
\end{equation*}
where 
$
\ds
\mathcal{M}_{
\rho_0/\eps,\,\mathcal{V}}
$
is given by 
$
\ds
\mathcal{M}_{
\rho_0/\eps,\,\mathcal{V}}\left(v\right)
\,=\,
\mathcal{M}_{
\rho_0/\eps}
\left(v-\mathcal{V}\right)
$. This result should be regarded as the expansion of $\mu^\eps$ at order $1$  in the regime of strong interactions. It may be compared with the expansion of $\mu^\eps$ at order $0$. 
Furthermore, it enables us to characterize the blow up profile of the distribution function $\mu^\eps$ and to improve the order of convergence.\\
Our result is in line with a broader collection of publications, which focus on the mathematical analysis of the dynamics in a FitzHugh-Nagumo neural networks with strongly interacting neurons. First, we mention \cite{HJ quininao/touboul}, in which similar results are obtained following a Hamilton Jacobi approach. 
The authors study the so-called Hopf-Cole transform $\phi^\eps$ of $\mu^\eps$ defined by the following  \textit{ansatz}
\[
\mu^\eps\,=\,
\exp{
\left(
\phi^\eps / \eps
\right)
}\,.
\]
However, due to this \textit{ansatz}, authors deal with well prepared initial condition in the sense that it is already concentrated at time $t=0$. In our case, we lift this assumption and deal with non-concentrated initial data. Furthermore, the results are stated in a spatially homogeneous setting and the limiting distribution $\bar{\mu}$ of the adaptation variable is not identified in \cite{HJ quininao/touboul}. Secondly,
our work follows on from \cite{limite-hydro-crevat}, which focuses on the expansion of $\mu^\eps$ at order $0$ in a deterministic setting. On top of that, we cite \cite{mlimit_bolley} which deals with mean-field limit in the context of collective dynamics. This article locates itself in a probability framework and the authors develop mathematical methods based on the Wasserstein distance and similar to the ones in the present article. However, the authors adopt a stochastic point of view whereas we focus on the analytic point of view all along this paper. To end with, we mention \cite{fournier/perthame}, where methods related to Wasserstein distances are reviewed for a broad class of models.

The rest of the paper consists in making the asymptotic expansion rigorous. In the next section, we state our assumptions on the parameters of our problem: $N$, $\Psi$ and 
$
\left(
\mu^\eps_{0}
\right)_{\eps\,>\,0}
$. Then we state the main result, Theorem \ref{main:th}. Section
\ref{sec:3} is devoted to {\it a priori} estimates on the solutions
$(\mu^\eps)_{\eps\,>\,0}$, whereas Section \ref{sec:4} contains the proof
of Theorem \ref{main:th}. Finally, in the Appendix, we give the main
ingredients to prove existence and uniqueness of a solution $\mu^\eps$
to equation \eqref{kinetic:eq} for any $\eps>0$. We mention that even
though this well posedness result is not our main concern here, we develop interesting arguments using a modified relative entropy which might have other applications.

\section{Mathematical setting \& main results}
\label{sec:2}

In this section, we give the precise mathematical setting of our analysis and present our main results on the profile of the distribution function $f^\eps$ when $\eps \ll 1$.

\subsection{Mathematical setting}
\label{sec:21}
We suppose the drift $N$ to be of class $\scC^2$ over $\R$. Then we set 
$
\ds
\omega(v)
\,=\,
N(v)/v
$ and suppose that the following coupled pair of confining assumptions are met
\begin{subequations}
\begin{numcases}{}
\label{hyp1:N}
\ds
    \limsup_{|v|\,\rightarrow\,+ \infty}\,
    \omega(v)
    \,=\,-\infty\,,\\[0,9em]
\label{hyp2:N}
\ds
    \sup_{|v|\,\geq\,1}\,
    \frac{
    \left|\omega(v)\right|
    }{|v|^{p-1}}
    \,<\, + \infty\,,
\end{numcases}
\end{subequations}
for some $p \geq 2$. Assumption \eqref{hyp1:N} ensures that $N$ is {super-linearly confining} at infinity. It allows us to obtain uniform estimates in time. However, it would have been replaced by 
\[
\ds
\sup_{|v|\,\geq\, 1}
\omega(v)\,<\,+\infty\,,\]
had we worked on finite time intervals.
Assumption \eqref{hyp2:N}
ensures that the {confinement property} of $N$ is controlled by a polynomial. This assumption is merely technical. Indeed, it may be possible to consider exponential control instead, as long as we also suppose exponential moments for the initial condition $\mu^\eps_0$. In the end, the assumption only dictates the localization we need on the initial condition: 
in our case, we choose polynomial control on the drift $N$ and polynomial moments on the initial condition. A typical choice for $N$ would be any polynomial of the form
\[
P(v)
\,=\,
Q(v)\,
-\,Cv|v|^{p-1}\,,
\]
for some positive constant $C\,>\,0$ and where $Q$ has degree less then $p$.

We turn to the connectivity kernel $\Psi$. We suppose
$
\Psi
\,
\in
\,
\scC^0
\left(
K_{\bx},\,
L^1
\left(
K_{\bx'}
\right)
\right)
$ and assume the following bound to hold
\begin{equation}
\label{hyp1:psi}
\sup_{\bx'\in K} \int_{K}\left|\Psi(\bx,\bx')\right| \,\dD\bx \,<\, +\infty\,.
\end{equation}
Moreover, we consider $r$ in $\ds ]1,\,+\infty]$, define its conjugate $r'\geq 1$ as $1/r + 1/r' = 1$ and suppose
\begin{equation}
\label{hyp2:psi}
\sup_{\bx\in K} \int_{K}\left|\Psi(\bx,\bx')\right|^r \,\dD\bx' \,<\, +\infty\,.
\end{equation}
Our set of assumptions on the connectivity kernel is quite general
since  we consider non-symmetric interactions between neurons and also
authorize the connectivity kernel to follow a {negative} power law, a case which is considered in the physical literature  (see \cite{mlimit4}).
\begin{remark}
It may be possible to adapt our analysis to the case of a discrete spatial variable. This could be done by replacing the Lebesgue measure $\dD \bx$ in the definition of $\Psi$ by a positive Borel measure $\lambda_K$ with finite mass over $K$ (typically a sum of Dirac mass if $K$ is discrete). In this case, we should  also suppose that $\rho_0^\eps$ lies in 
$\ds
L^1
\left(
\lambda_K
\right)
$.
\end{remark}
We now state our assumptions on the sequence of initial data 
$
\left(
\mu^\eps_{0,\bx}
\right)_{\eps\,>\,0}
$. We suppose that for each $\eps\,>\,0$ we have 
\[
\ds
\left(
\bx
\,
\mapsto\,
\mu^\eps_{0,\bx}
\right)
\in
\scC^0
\left(
K\,,\,
\scP_2
\left(\R^2
\right)
\right)\,.
\]
{We also suppose the spatial distribution of the network $\rho_0^\eps$ to be continuous over $K$ and uniformly bounded from above and below, that is, there exists a constant $m_*\,>\,0$ such that for all $\eps > 0$
\begin{equation}
\label{hyp:rho0}
m_* \leq \rho_0^\varepsilon \leq 1/m_*\,,\quad\rho_0^\eps \in \scC^0
\left(
K
\right)\,.
\end{equation}}
On top of that, we assume the following condition: there exist two positive constants $m_p$ and $\ols{m}_p$, independent of $\eps$, such that
\begin{equation}
\label{hyp1:f0}
\sup_{\bx \in K}
\,\int_{\R^2}
|\bu|^{2p} \mu^\eps_{0,\bx} (\dD \bu)
\,\leq\, m_p\,,
\end{equation}
and such that
\begin{equation}
\label{hyp2:f0}
\int_{K \times \, \R^2}\,
|\bu|^{2p r'}
\,\rho_0^\eps(\bx)\,\mu^\eps_{0,\bx} 
(
\dD \bu)\,\dD \bx
\,\leq\, \ols{m}_p\,,
\end{equation}
where $p$ and $r'$ are given in \eqref{hyp2:N} and \eqref{hyp2:psi}.\\

Now let us define the notion of solution we will consider for equation \eqref{kinetic:eq}.
\begin{definition}\label{notion de solution}
For all $\varepsilon>0$ we say that $\mu^\varepsilon$ solves \eqref{kinetic:eq} with initial condition $\mu^\varepsilon_0$ if we have
\begin{enumerate}
\item  $\mu^\eps$ lies in
\[ 
\scC^0
\left(
\R^+\times K\,,\,
L^1
\left(\R^2\right)
\right)
\cap 
L^\infty_{loc} \left(
\R^+\times K\,,\,
\scP_2
\left(\R^2\right)
\right)\,
,\] 
\item for all $\bx \in K$, $t \geq 0$,
and $\ds\varphi \in \scC_c^\infty
\left(\R^2\right)$
, it holds
\begin{align*}
\int_{\R^2}
\varphi(\bu)\,
\left(
\mu^\varepsilon_{t,\bx}
-
\mu^\varepsilon_{0,\bx}
\right)(\bu)\,\dD\bu\,
&=\,
\int_0^t
\int_{\R^2}
\left(
\nabla_{\bu} \varphi(\bu)
\cdot 
\mathbf{b}^\eps(s,\bx,\bu)
        \,+\,
\partial_v^2
 \varphi(\bu)
\right)
\mu^\varepsilon_{s,\bx}(\bu)\,\dD\bu\,\dD s\\[0,8em]
&-\frac{\rho_0^\eps(\bx)}{\eps}
\int_0^t
\int_{\R^2}
\partial_v
 \varphi(\bu)\,
        \left(v-\cV^\eps(s,\bx)\right)
\mu^\varepsilon_{s,\bx}(\bu)\,\dD\bu\,\dD s\,,
\end{align*}
\end{enumerate}
where $\mathcal{V}^\eps$ and $\mathbf{b}^\eps$  are given by \eqref{macro:q} and \eqref{kinetic:eq}.
\end{definition}
With this notion of solution, equation \eqref{kinetic:eq} is well-posed. Indeed, we have
\begin{theorem}
\label{WP mean field eq}
For any $\varepsilon > 0$, suppose that assumptions \eqref{hyp1:N}-\eqref{hyp2:N} and \eqref{hyp1:psi}-\eqref{hyp:rho0} are fulfilled and that the initial condition $\mu_0^\varepsilon$ also verifies
\begin{equation}
\label{hyp3:f0}
\left\{
\begin{array}{l}
\ds\sup_{\bx \in K}
    \int_{\R^2}
    e^{|\bu|^2/2}\,
    \mu^\eps_{0,\bx}(\dD \bu)\,
    \leq\, M^\eps\,,
    \\[1.1em]
    \ds \sup_{\bx \in K}
    \int_{\R^2}
    \ln(\mu^\eps_{0,\bx})\,\mu^\eps_{0,\bx}(\dD \bu)\,
    \leq\, m^\eps\,,
    \end{array}\right.
\end{equation}
and
\begin{equation}\label{hyp4:f0}
\sup_{\bx \in K}
    \left\|
\nabla_{\bu}
\sqrt{\mu^\eps_{0,\bx}}\,
\right\|^2_{L^2(\R^2)}\,
    \leq\, m^\eps\,,
\end{equation}
where $M^\eps$ and $m^\eps$ are two positive constant. Then there exists a unique solution $\mu^\varepsilon$ to equation \eqref{kinetic:eq} with initial condition $\mu^\varepsilon_0$, in the sense of Definition \ref{notion de solution} which verifies
\[
\displaystyle
\sup_{(t\,,\,\bx) \in [0,T]\times K}\,
\int_{\R^2}
e^{|\bu|^2/2}\, \mu^\varepsilon_{t,\bx}(\bu)\,
\dD \bu < +\infty\,,
\]
for all $T\,\geq\,0$.\\ Furthermore, the macroscopic quantities 
$
\ds
\mathcal{V}^\eps
$ and 
$
\ds
\mathcal{W}^\eps
$ given in \eqref{macro:q} lie in
$
\ds
\scC^0
\left(
\R^+ \times K 
\right)
$.
\end{theorem}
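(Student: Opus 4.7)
The plan is to treat \eqref{kinetic:eq} as a nonlinear non-local Fokker-Planck equation parametrized by $\bx\in K$ and to construct a solution via a Banach fixed-point argument, then to globalize via a priori exponential moment estimates, and finally to obtain uniqueness through a modified relative entropy. First I would freeze the nonlinearity: given a candidate $\widetilde{\mu}$ in a suitable metric space, compute the associated macroscopic velocity $\widetilde{\cV}(t,\bx) = \int v\,\widetilde{\mu}_{t,\bx}(\dD\bu)$ and the non-local term $\mathcal{K}_\Psi[\rho_0^\eps\widetilde{\mu}]$, and solve the resulting linear Fokker-Planck equation in $(t,\bu)$, which is non-degenerate parabolic in $v$ and a pure transport in $w$. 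Standard parabolic theory (or, equivalently, the probabilistic representation via the associated SDE) yields existence, uniqueness and non-negativity of $\mu$ in $\scC^0(\R^+\times K; L^1(\R^2))$, and continuity in $\bx$ follows from the continuity of the coefficients together with uniqueness for the linear problem.

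The fixed-point map $T:\widetilde{\mu}\mapsto \mu$ is then shown to be a contraction on a short time interval $[0,\tau]$ for the Wasserstein distance $W_2$ applied pointwise in $\bx$; given two candidates, a synchronous coupling of the underlying SDEs yields an estimate of the form
$$\frac{\dD}{\dD t}\,W_2^2(\mu^1_{t,\bx},\mu^2_{t,\bx}) \,\leq\, C_\eps\Bigl(W_2^2(\mu^1_{t,\bx},\mu^2_{t,\bx}) \,+\, \sup_{\bx'\in K} W_2^2(\widetilde{\mu}^1_{t,\bx'},\widetilde{\mu}^2_{t,\bx'})\Bigr),$$
with $C_\eps$ depending on the Lipschitz constant of $N$ on a ball, on $\Psi$ via \eqref{hyp1:psi}, and on the bounds \eqref{hyp:rho0} on $\rho_0^\eps$. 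Since $N$ is only locally Lipschitz, this argument must be carried out within the set of measures with uniformly bounded Gaussian moments, which is stable under $T$ thanks to the super-linear confinement \eqref{hyp1:N}: multiplying the linear equation by $e^{|\bu|^2/2}$ and integrating yields a uniform-in-time bound of the form required in \eqref{hyp3:f0}, which in turn prevents blow-up and allows one to extend the local solution globally by iteration. Continuity of $\cV^\eps$ and $\cW^\eps$ on $\R^+\times K$ then follows from the continuity of $\mu^\eps$ and the uniform Gaussian moment by dominated convergence.

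The most delicate step is uniqueness, which I would establish via the modified relative entropy announced in the introduction. For two solutions $\mu^1,\mu^2$ sharing the same initial datum, I would consider a functional of the form $H(\mu^1|\mu^2)(t,\bx) = \int\mu^1_{t,\bx}\ln(\mu^1_{t,\bx}/\mu^2_{t,\bx})\,\dD\bu$, finite at $t=0$ thanks to \eqref{hyp3:f0}--\eqref{hyp4:f0} and propagated by the linear theory. A direct computation produces a non-positive dissipation from the Laplacian, a non-positive contribution from the singular $(1/\eps)$-term (which is of Ornstein-Uhlenbeck type once centered around $\cV^\eps$), and error terms from the difference $\mathcal{K}_\Psi[\rho_0^\eps\mu^1]-\mathcal{K}_\Psi[\rho_0^\eps\mu^2]$ and the gap $\cV^{1,\eps}-\cV^{2,\eps}$, which can be bounded via Csisz\'ar--Kullback--Pinsker together with the Gaussian moment control by a multiple of $\int_K H(\mu^1|\mu^2)\,\dD\bx$. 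Gronwall's lemma then forces $H\equiv 0$ and hence $\mu^1=\mu^2$. The main obstacle is precisely to close this entropy estimate in the presence of the non-Lipschitz cubic drift $N$: the argument genuinely needs the uniform Gaussian moments supplied by \eqref{hyp1:N}--\eqref{hyp2:N} in order to dominate the cross terms $N(v)\bigl(\ln(\mu^1/\mu^2)\bigr)$ that appear in the time derivative of $H$.
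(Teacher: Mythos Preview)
Your overall strategy---fixed-point for existence, exponential moment propagation for globalization, relative entropy for uniqueness---is reasonable, but it diverges from the paper's route in two places, and your uniqueness sketch contains a concrete misidentification of the difficulty.

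\textbf{Existence.} The paper does not run a Banach fixed-point in $W_2$. Instead it truncates $N$ to a globally Lipschitz $N^R$, builds solutions of the regularized equation by an iterative scheme (whose convergence is proved by the same modified relative entropy used for uniqueness), and then passes to the limit $R\to\infty$ via Ascoli, using the continuity estimates in $L^1$ with respect to $\bx$ and $t$ (Propositions \ref{continuity:space}--\ref{continuity:time}) to obtain compactness. Your scheme may also work---the one-sided Lipschitz property of $N$ coming from \eqref{hyp1:N} is exactly what a synchronous-coupling $W_2$ estimate needs---but the paper's compactness route avoids having to quantify a contraction constant and gives the $\scC^0(\R^+\times K;L^1)$ regularity as a by-product.

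\textbf{Uniqueness.} You announce a ``modified'' relative entropy but then write the standard one $H[\mu^1|\mu^2]=\int\mu^1\ln(\mu^1/\mu^2)$. The paper's functional is genuinely different:
\[
H_{1/2}[\mu^1|\mu^2]\,=\,\int_{\R^2}\mu^1\ln\!\left(\frac{2\mu^1}{\mu^1+\mu^2}\right)\dD\bu,
\]
with the convex combination in the denominator. This is the point of the construction: $H_{1/2}$ is always finite (indeed bounded by $\ln 2$) with no support condition, and it is equivalent to the $L^1$ distance by Lemma \ref{entropy V.S. L1}. This matters less for uniqueness (both solutions start equal) than for the continuity estimates in $\bx$ and $t$, where the standard entropy could be infinite.

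More importantly, you misplace the obstacle. The ``cross terms $N(v)\ln(\mu^1/\mu^2)$'' you worry about do \emph{not} appear: when you differentiate $H_{1/2}[\mu^1|\mu^2]$ along the two equations, all linear transport contributions---including the $N(v)$, $-w$, $A(\bu)$, and $(\rho_0^\eps/\eps+\Psi*_r\rho_0^\eps)v$ pieces---cancel exactly after integration by parts, because both solutions are advected by the same field. What survives is the Fisher dissipation $-I_{1/2}$ and a single remainder involving $\mathcal{N}^1-\mathcal{N}^2$, which is proportional to $\cV^1-\cV^2$. The real work is to bound $|\cV^1-\cV^2|$ by $H_{1/2}[\mu^1|\mu^2]^{1/2}$; the paper does this via $|\cV^1-\cV^2|\le W_1(\mu^1,\mu^2)$ together with the weighted Csisz\'ar--Kullback--Pinsker inequality of Bolley--Villani, which is precisely where the exponential moments from \eqref{hyp3:f0} enter. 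After that, Young's inequality absorbs the remainder into the Fisher term and Gronwall closes the estimate on $\sup_{\bx\in K}H_{1/2}$.
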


We postpone the proof of this result to the Appendix \ref{Appendix}, which relies on relative entropy estimates. We take advantage of assumption \eqref{hyp3:f0} to derive continuity estimates for both the time and the spatial variable. More precisely, we apply an abstract result from \cite{Bolley/Villani} which ensures that if we suppose some exponential moments such as in \eqref{hyp3:f0}, then the Wasserstein metric is controlled by the relative entropy. We make use of assumption \eqref{hyp4:f0} in order to obtain strong continuity with respect to the time variable. We emphasize that since we are not able to close the estimates using directly the $L^1$ norm, we introduce some modified relative entropy, which turns out to be, in some sense, equivalent to the $L^1$ distance. Our approach is original to our knowledge.  

\begin{remark} We do not make use of assumptions \eqref{hyp3:f0} \& \eqref{hyp4:f0} in the analysis of the asymptotic $\eps \rightarrow 0$. Therefore, both constants $M^\eps$ and $m^\eps$ may blow up as $\eps$ vanishes. 
\end{remark}
We also define solutions for the limiting system \eqref{macro:eq}
\begin{definition}\label{sol:macro}
We say that $
\ds
\left(
\mathcal{V}\,,\,
\bar{\mu}
\right)
$ solves \eqref{macro:eq} with initial condition 
$
\ds
\left(
\mathcal{V}_0\,,\,
\bar{\mu}_0
\right)
$
if we have
\begin{enumerate}
    \item 
    $
    \ds
    \mathcal{V} \in \scC^0
    \left(
    \R^+\times K
    \right)
    $
    and 
    $
    \ds
    \bar{\mu} \in \scC^0
    \left(
    \R^+\times K\,,\,
    L^1
    \left(
    \R
    \right)
    \right)
    $,
    \item $\mathcal{V}$ is a mild solution to \eqref{macro:eq},
    \item for all $t\,\geq\,0$, all $\bx \in K$ and all 
    $
    \ds
    \phi
    \in 
    \scC^\infty_c(\R)
    $ we have
    \[
    \int_{\R}
    \phi(w)\,
    \bar{\mu}_{t,\bx}(w)\,\dD w
    \,=\,
    \int_{\R}
    \phi(w)\,
    \bar{\mu}_{0,\bx}(w)\,\dD w
    \,+\,
    \int_0^{t}
    \int_{\R}
        A(\cV,w)\,
        \partial_w\,
    \phi(w)\,
    \bar{\mu}_{s,\bx}(w)\,\dD w\,\dD s\,.
    \]
\end{enumerate}
\end{definition}
\begin{theorem}\label{wp macro eq}
Under assumptions \eqref{hyp1:N}, \eqref{hyp2:psi}-\eqref{hyp:rho0}, and for any initial condition
\[
\left(
\mathcal{V}_0\,,\,
\bar{\mu}_0
\right)
\in 
\scC^0
\left(
K
\right)
\times
\scC^0
\left(
K\,,\,
L^{1}
\left(
\R
\right)
\right)
,\]
there exists a unique  solution to \eqref{macro:eq} in the sense of Definition \ref{sol:macro} with initial condition
$
\ds
\left(
\mathcal{V}_0\,,\,
\bar{\mu}_0
\right)
$.\\
Furthermore, $\mathcal{V}$ is uniformly bounded over $\R^+ \times K$.
\end{theorem}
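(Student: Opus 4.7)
The key structural observation is that $A(v,w)=av-bw+c$ is affine in $w$, so multiplying the second equation of \eqref{macro:eq} by $w$ and integrating over $\R$ produces a closed first-moment ODE $\partial_t\cW=a\,\cV-b\,\cW+c$ (using that the total mass of $\bar\mu_{t,\bx}$ is conserved). Consequently the couple $(\cV,\cW)$ satisfies the autonomous nonlocal ODE system
\[
\partial_t \cV \,=\, N(\cV)\,-\,\cW\,-\,\mathcal{L}_{\rho_0}[\cV]\,, \qquad \partial_t \cW \,=\, a\,\cV\,-\,b\,\cW\,+\,c\,,
\]
with initial data $(\cV_0,\cW_0)$, where $\cW_0(\bx):=\int_\R w\,\bar\mu_{0,\bx}(\dD w)$ (implicitly assuming $\bar\mu_0$ has a continuous first moment in $\bx$, as is compatible with the derivation of \eqref{macro:eq} from \eqref{macro-eps:eq}). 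My strategy is to first establish well-posedness of this reduced system, and then reconstruct $\bar\mu$ from $\cV$ by the method of characteristics.

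\textbf{Local existence, uniqueness and regularity.} Assumptions \eqref{hyp1:psi}--\eqref{hyp2:psi} together with \eqref{hyp:rho0} and the compactness of $K$ guarantee that $\mathcal{L}_{\rho_0}$ is a bounded linear operator on $L^\infty(K)$: both $\Psi *_r \rho_0$ and $\Psi *_r(\rho_0\,\cV)$ are controlled via Young's inequality and $\|\rho_0\|_{L^\infty(K)}\leq 1/m_*$. Since $N\in\scC^2(\R)$ is locally Lipschitz, a standard Banach fixed-point argument applied to the Duhamel formulation of the reduced system on a closed ball of $\scC^0([0,T]\times K,\R^2)$ centered at $(\cV_0,\cW_0)$ produces a unique local-in-time solution; uniqueness up to a maximal existence time then follows from Gronwall on the difference of any two solutions, using the local Lipschitz bound on $N$.

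\textbf{Main obstacle: global $L^\infty$ bound.} The principal technical step is extending the local solution to $\R^+$ by means of a uniform-in-time bound on $(\cV,\cW)$. Because $\Psi$ is not assumed sign-definite, no maximum principle is available for $\cV$; one must instead absorb the nonlocal term into the super-linear dissipation provided by \eqref{hyp1:N}. A pointwise computation gives
\[
\tfrac{1}{2}\,\partial_t |\cV(t,\bx)|^2 \,=\, \omega(\cV(t,\bx))\,|\cV(t,\bx)|^2 \,-\, \cV\,\cW \,-\, \cV\,\mathcal{L}_{\rho_0}[\cV]\,,
\]
and, setting $M_V(t)=\|\cV(t,\cdot)\|_{L^\infty(K)}$, $M_W(t)=\|\cW(t,\cdot)\|_{L^\infty(K)}$ and $C_0=\|\mathcal{L}_{\rho_0}\|_{L^\infty\to L^\infty}$, evaluating at a spatial maximizer of $|\cV(t,\cdot)|$ and using $|\mathcal{L}_{\rho_0}[\cV]|\leq C_0\,M_V$ yields $M_V'\leq (\omega(M_V)+C_0)\,M_V+M_W$; the analogous estimate for $M_W$ reads $M_W'\leq |a|\,M_V-b\,M_W+|c|$. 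Picking, via \eqref{hyp1:N}, a threshold $R_\lambda$ large enough that $\omega(v)+C_0\leq -\lambda$ for $|v|\geq R_\lambda$ with $\lambda$ chosen strictly larger than $|a|/b$, a bootstrap on the Lyapunov functional $E(t)=M_V(t)+(1/b)\,M_W(t)$ gives a differential inequality $E'\leq -\delta E+C$ outside the box $\{M_V\leq R_\lambda\}$. This delivers a uniform bound $M_V(t)+M_W(t)\leq R$ for all $t\geq 0$, where $R$ depends only on the initial data and the structural constants; in particular this proves global existence and the final assertion of the theorem. The formal manipulation of the sup-norm can be justified either by working on smooth approximations or by using Danskin's theorem at maximizers.

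\textbf{Reconstruction of $\bar\mu$ via characteristics.} With $\cV\in\scC^0(\R^+\times K)$ in hand, the second equation of \eqref{macro:eq} becomes, at each fixed $\bx$, a linear transport equation in $w$ whose characteristic ODE $\dot w=a\,\cV(t,\bx)-b\,w+c$ admits the explicit affine flow
\[
\Phi^\bx_t(w_0) \,=\, e^{-bt}\,w_0 \,+\, \int_0^t e^{-b(t-s)}\,\big(a\,\cV(s,\bx)+c\big)\,\dD s\,,
\]
with constant Jacobian $e^{-bt}$. Defining $\bar\mu_{t,\bx}:=(\Phi^\bx_t)_\#\bar\mu_{0,\bx}$ furnishes the unique distributional solution of the transport equation in the sense of Definition \ref{sol:macro}; a direct change of variables verifies that $\int_\R w\,\dD\bar\mu_{t,\bx}(w)=\cW(t,\bx)$, ensuring consistency with the ODE system. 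Continuity of $(t,\bx)\mapsto \bar\mu_{t,\bx}$ in $L^1(\R)$ follows from the continuity of $(t,\bx)\mapsto \Phi^\bx_t$ (inherited from $\cV\in\scC^0(\R^+\times K)$), the continuity of $\bx\mapsto\bar\mu_{0,\bx}$ in $L^1(\R)$, and the classical $L^1$-continuity of translations of $L^1$ functions, completing the proof.
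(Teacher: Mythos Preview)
Your proof is correct and shares the paper's key structural insight: decouple the system by first extracting the closed ODE system for $(\cV,\cW)$, then treat the transport equation for the adaptation marginal separately. The execution differs in two respects. First, the paper's argument is much terser on the macroscopic ODE: it simply refers to \cite{limite-hydro-crevat} for existence, uniqueness and the uniform bound on $(\cV,\cW)$, whereas you supply an explicit fixed-point argument and a Lyapunov bound exploiting \eqref{hyp1:N}; your account is therefore more self-contained (one minor point: your choice of coefficient $1/b$ in $E=M_V+(1/b)M_W$ is borderline, since the resulting $M_W$ coefficient vanishes---any $\gamma\in(1/b,\lambda/|a|)$ gives the clean inequality $E'\leq -\delta E+C$ you claim). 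Second, for the transport part the paper performs the change of variable \eqref{change:var}, i.e.\ $w\mapsto w-\cW(t,\bx)$, which decouples the equation into $\partial_t\bar\nu-b\,\partial_w(w\bar\nu)=0$; this has the explicit dilation solution $\bar\nu_{t,\bx}(w)=e^{bt}\bar\nu_{0,\bx}(e^{bt}w)$, independent of $\cV$. You instead solve the full equation directly via the affine characteristic flow $\Phi^\bx_t$. The two are of course equivalent---your $\Phi^\bx_t$ is exactly the composition of the dilation with the shift by $\cW(t,\bx)$---but the paper's recentering makes the $L^1$-continuity of $\bar\mu$ in $(t,\bx)$ a touch more transparent, since $\bar\nu$ no longer depends on $\cV$ at all.
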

\begin{proof}
The key argument is that the system \eqref{macro:eq} may be decoupled through the change of variable 
\eqref{change:var}.
Indeed, we consider the following system
\begin{equation*}
  \label{macro:eq2}
    \left\{
    \begin{array}{l}
        \displaystyle \partial_t\cV
        = 
        N(\cV)
        -
        \cW
        -
       \mathcal{L}_{\rho_0}[\cV]\,
        ,\\[0.9em]
        \displaystyle \partial_t\cW
        \,=\, 
        A
        \left(\mathcal{V}\,,\,
        \mathcal{W}
        \right)
        \,,\\[0.9em]
        \displaystyle \partial_t \bar\nu
        \,-\,b\,
        \partial_{w}
        \left( 
        w \,\bar\nu\right) \,=\, 0\,,
    \end{array}
\right.
\end{equation*}
which turns out to be equivalent to \eqref{macro:eq} in the sense that it is solved by
$
\left(
\mathcal{V}\,,\,
\mathcal{W}\,,\,
\bar{\nu}
\right)
$
if and only if
$
\left(
\mathcal{V}\,,\,
\bar{\mu}
\right)
$
solves \eqref{macro:eq}, where $\bar{\mu}$ is defined as
\[
\bar{\mu}
\left(t\,,\,\bx\,,\,w
\right)
\,=\,
\bar{\nu}
\left(t\,,\,\bx\,,\,w\,-\,
\mathcal{W}(t\,,\,\bx)
\right)\,,
\]
for all 
$
\ds
(t,\,\bx,\,w) 
$ in $\R^+ \times K \times \R$.
Existence and uniqueness for $\bar{\nu}$ relies on classical arguments and we refer to \cite{limite-hydro-crevat}, where one can find the proof of existence and uniqueness for the system 
$
\ds
\left(\mathcal{V},\,
\mathcal{W}
\right)
$.
\end{proof}

\subsection{Main results}
\label{sec:22}
The following theorem is the main result of this article. It states that in the regime of strong interactions, the distribution of the voltage variable concentrates with rate $\sqrt{\eps}$ around $\mathcal{V}$ with Gaussian profile. The distribution of the adaptation variable converges towards $\bar{\mu}$. Hence, the couple 
$
\left(
\mathcal{V},\,
\bar{\mu}
\right)
$, which solves \eqref{macro:eq}, encodes the behavior of the system
when $\eps \ll 1$. {The result provides an explicit convergence rate
which is global in time and uniform in $\bx\in K$.}
\begin{theorem}\label{main:th}
Under assumptions \eqref{hyp1:N}-\eqref{hyp2:N} on the drift $N$, \eqref{hyp1:psi}-\eqref{hyp2:psi} on $\Psi$, \eqref{hyp:rho0}-\eqref{hyp2:f0} on the initial conditions $\mu^\eps_0$ and under the additional assumptions of Theorem \ref{WP mean field eq}, consider the solutions $\mu^\eps$ \& 
$
\left(
\mathcal{V},\,
\bar{\mu}
\right)
$ provided by Theorem \ref{WP mean field eq} \& \ref{wp macro eq} respectively. Furthermore, define the initial macroscopic and mesoscopic errors as
\[
\mathcal{E}_{\mathrm{mac}}
\,=\,
\left\|\,
\mathcal{U}_0
\,-\,
\mathcal{U}_0^\eps\,
\right\|_{L^{\infty}(K)}
\,+\,
\|\,
\rho_0
-
\rho_0^\eps\,
\|_{L^{\infty}(K)}\,,
\]
and
\[
\mathcal{E}_{\mathrm{mes}}
\,=\,
\sup_{\bx \in K}\,
W_2
\left(
\Bar{\nu}^\eps_{0,\bx},
\Bar{\nu}_{0,\bx}
\right)\,.
\]
Then there exists 
$(C,\eps_0)\, \in \, 
\left(
\R^+_*
\right)^2
$
such that the following expansion holds for all $\eps \leq \eps_0$,
\begin{align*}
\ds
W_2\left(
\mu^\eps_{t,\,\bx}\,,\, 
\mathcal{M}_{\frac{1}{\eps}\,\rho_0(\bx)\,,\,\mathcal{V}(t,\,\bx)}
\otimes
\bar{\mu}_{t,\,\bx}
\right)
\,
\leq\,
C
\,\left(
\min
\left(
e^{C\,t}
\left(\mathcal{E}_{\mathrm{mac}}
\,+\, \eps\right)\,,\,1
\right)
\,+\,
 \mathcal{E}_{\mathrm{mes}}\,e^{-b\,t}
\,+\,
e^{-\rho_0^\eps(\bx)\,t\,/\,\eps}
\right)
\,,
\end{align*}
for all $
(t,\bx) \in \R^+ \times K$.\\
Moreover, suppose the initial errors to be of order $\eps$, that is
\[
\ds
\mathcal{E}_{\mathrm{mac}}
\,+\,
 \mathcal{E}_{\mathrm{mes}}
\,
\underset{\eps \rightarrow 0}{=}
\,
O
\left(
\eps
\right)\,,
\]
then the following estimate holds, 
\begin{align*}
\ds
W_2\left(
\mu^\eps_{t,\,\bx}\,,\, 
\mathcal{M}_{\frac{1}{\eps}\,\rho_0(\bx)\,,\,\mathcal{V}(t,\,\bx)}
\otimes
\bar{\mu}_{t,\,\bx}
\right)
\,
\leq\,
C\,
\left(
\min
\left(
e^{C\,t}\,
\eps\,,\,1
\right)
\,+\,
e^{-\rho_0^\eps(\bx)\,t/\eps}
\right)
\,,\quad\forall
(t,\bx) \in \R^+ \times K\,.
\end{align*}
\end{theorem}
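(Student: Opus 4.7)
The plan is to insert the intermediate measure $\cM_{\rho_0^\eps/\eps,\,\cV^\eps}\otimes\bar{\mu}^\eps$ and split by triangle inequality:
\begin{equation*}
W_2\bigl(\mu^\eps_{t,\bx},\cM_{\rho_0/\eps,\,\cV}\otimes\bar{\mu}_{t,\bx}\bigr)
\,\leq\,
\underbrace{W_2\bigl(\mu^\eps_{t,\bx},\cM_{\rho_0^\eps/\eps,\,\cV^\eps}\otimes\bar{\mu}^\eps_{t,\bx}\bigr)}_{\text{mesoscopic}}
\,+\,
\underbrace{W_2\bigl(\cM_{\rho_0^\eps/\eps,\,\cV^\eps}\otimes\bar{\mu}^\eps_{t,\bx},\cM_{\rho_0/\eps,\,\cV}\otimes\bar{\mu}_{t,\bx}\bigr)}_{\text{macroscopic}}.
\end{equation*}
The mesoscopic piece measures the distance of $\mu^\eps$ to its Gaussian local equilibrium (with its own averaged parameters) and should furnish both the initial-layer term $e^{-\rho_0^\eps t/\eps}$ and an $O(\eps)$ correction; the macroscopic piece compares $(\rho_0^\eps,\cV^\eps,\bar{\mu}^\eps)$ to $(\rho_0,\cV,\bar{\mu})$ and yields the remaining two contributions.

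\textbf{Mesoscopic step.} I would pass to the rescaled variable $\nu^\eps$ satisfying \eqref{nu:eq}; in that setting the target transforms into the natural local equilibrium $\cM_{\rho_0^\eps}\otimes\bar{\nu}^\eps$. Since $\mu^\eps$ and $\cM_{\rho_0^\eps/\eps,\,\cV^\eps}\otimes\bar{\mu}^\eps$ share the same $w$-marginal $\bar{\mu}^\eps$, a coupling built by disintegrating in $w$ reduces the mesoscopic distance to a purely $v$-direction expression,
\begin{equation*}
W_2^2\bigl(\mu^\eps_{t,\bx},\cM_{\rho_0^\eps/\eps,\,\cV^\eps}\otimes\bar{\mu}^\eps_{t,\bx}\bigr)
\,\leq\,
\eps\int_{\R}W_2^2\bigl(\mu^\eps_{t,\bx}(\cdot\mid w),\,\cM_{\rho_0^\eps/\eps,\,\cV^\eps}\bigr)\,\bar{\mu}^\eps_{t,\bx}(\dD w).
\end{equation*}
To control the right-hand side I would differentiate in time the modulated squared Wasserstein functional along \eqref{nu:eq}. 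The Fokker--Planck block on the right of \eqref{nu:eq} is the Ornstein--Uhlenbeck generator in $v$ with spectral gap $\rho_0^\eps/\eps$, which together with the uniform lower bound $m_*$ from \eqref{hyp:rho0} produces a contraction of rate $\rho_0^\eps(\bx)/\eps$ and is responsible for the boundary-layer exponential $e^{-\rho_0^\eps(\bx)t/\eps}$. The convective perturbation $\mathbf{b}_0^\eps$, obtained by Taylor-expanding $N$ around $\cV^\eps$ at scale $\sqrt{\eps}$, contributes a lower-order forcing of size $\sqrt{\eps}$ that is absorbed thanks to the uniform polynomial moment bounds from Section~\ref{sec:3} (and the sharp preliminary bound of Proposition~\ref{prop:2}).

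\textbf{Macroscopic step.} Using the sub-additivity of $W_2^2$ under tensor products with common $w$-marginal, the macroscopic error splits further into an explicit Gaussian-to-Gaussian piece $W_2\bigl(\cM_{\rho_0^\eps/\eps,\,\cV^\eps},\cM_{\rho_0/\eps,\,\cV}\bigr)\leq|\cV^\eps-\cV|+C\sqrt{\eps}\,|\rho_0^\eps-\rho_0|$, plus $W_2(\bar{\mu}^\eps,\bar{\mu})$. Subtracting \eqref{macro:eq} from \eqref{macro-eps:eq} and applying Grönwall to the linear system for $\cU^\eps-\cU$ gives, using that $|\cE(\mu^\eps)|$ is $O(\eps)$ by \eqref{hyp2:N} and the mesoscopic estimate, the growth $|\cU^\eps-\cU|\leq C e^{Ct}(\cE_{\mathrm{mac}}+\eps)$; this is truncated at $1$ via the trivial bound furnished by uniform second-moment estimates. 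For $W_2(\bar{\mu}^\eps,\bar{\mu})$ I would invoke the Wasserstein contraction for the transport equation in $w$, whose drift $A(\cV,\cdot)$ is contracting at rate $b>0$: this produces an $\cE_{\mathrm{mes}}\,e^{-bt}$ contribution plus a forcing controlled by $|\cV^\eps-\cV|$ that is absorbed into the macroscopic budget.

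\textbf{Main obstacle.} The principal difficulty is the mutual coupling between the two steps: the drift $\mathbf{b}_0^\eps$ in \eqref{nu:eq} depends on the macroscopic unknowns $(\cV^\eps,\cW^\eps)$, while the ODE for $\cV^\eps$ in \eqref{macro-eps:eq} involves $\cE(\mu^\eps)$, which is only controlled by the mesoscopic estimate. I would break this circularity by a bootstrap: first establish the coarse $O(\sqrt{\eps})$ mesoscopic bound (essentially Corollary~\ref{order 0}), feed it into the macroscopic error to get $|\cU^\eps-\cU|=O(\eps+\cE_{\mathrm{mac}})$ (as in Proposition~\ref{estimate for the error 2}), then reinject this refined macroscopic control into \eqref{nu:eq} to upgrade the mesoscopic estimate to $O(\sqrt{\eps})$ in rescaled variables, that is $O(\eps)$ after reverting the change of variable \eqref{change:var}. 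Specializing finally to $\cE_{\mathrm{mac}}+\cE_{\mathrm{mes}}=O(\eps)$ merges the $\cE_{\mathrm{mes}}e^{-bt}$ term into the macroscopic budget and yields the sharper second estimate of the theorem.
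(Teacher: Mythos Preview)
Your decomposition differs from the paper's in a subtle but decisive way: you insert $\cM_{\rho_0^\eps/\eps,\cV^\eps}\otimes\bar\mu^\eps$ (with the $w$-marginal of $\mu^\eps$ itself), whereas the paper inserts $\cM_{\rho_0^\eps/\eps,\cV^\eps}\otimes\bar\nu(\cdot-\cW^\eps)$ (the \emph{limiting} $w$-distribution, recentered by $\cW^\eps$). Your choice makes the mesoscopic piece purely one-dimensional in $v$, which is convenient, but it shifts the whole $w$-direction comparison into the macroscopic piece, and there your argument has a genuine gap.

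The gap is in your estimate of $W_2(\bar\mu^\eps,\bar\mu)$. You write that the forcing is ``controlled by $|\cV^\eps-\cV|$'', but the transport equation for $\bar\mu^\eps$ has drift $w\mapsto a\,E_{\mu^\eps}[v\mid w]-bw+c$, not $a\cV^\eps-bw+c$. The extra piece $E_{\mu^\eps}[v\mid w]-\cV^\eps$ is $w$-dependent and, by the variance decomposition together with Proposition~\ref{prop:2}, has $L^2(\bar\mu^\eps)$ norm only $O(\sqrt\eps)$. Feeding this into a Gr\"onwall argument for the coupled transport in $w$ gives $W_2(\bar\mu^\eps,\bar\mu)=O(\sqrt\eps)$, not $O(\eps)$; your bootstrap therefore stalls at the coarse rate of Corollary~\ref{order 0} and never reaches the sharp rate asserted in Theorem~\ref{main:th}.

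The paper avoids this loss by keeping the $w$-comparison inside the first term $\cD_1$ and treating it through a \emph{joint} coupling: a Fokker--Planck equation~\eqref{combined problem} on $\R^4$ whose marginals are $\nu^\eps$ and $\cM_{\rho_0^\eps}\otimes\bar\nu$, with rank-one diffusion ($\beta=1$, i.e.\ synchronous noise). Within this coupling one can differentiate not only $\cA=\int|v-v'|^2\,\dD\pi^\eps$ and $\cB=\int|w-w'|^2\,\dD\pi^\eps$ but also the cross term $\cB_2=\int v'(w-w')\,\dD\pi^\eps$; a direct ODE for $\cB_2$ shows it is $O(\eps^{3/2})$, which then yields $\cB=O(\eps^2)$. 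This cross-term cancellation is invisible to any marginal-by-marginal argument and is the mechanism your proposal is missing.
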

{An important feature in our work is that we \textbf{do not suppose} the initial condition $\mu^\varepsilon_{0}$ to be concentrated, nor the initial profile $\nu^\varepsilon_{0}$ to be close to its limit. In particular, this means that the problem on $\nu^\eps$ is \textbf{ill-prepared}. Indeed, performing the change of variable \eqref{change:var} in the following \textit{ansatz}
\[
\inf_{\bx \in K}
\int_{\R^2}
\,
|v
\,-\,
\mathcal{V}^\eps_0(\bx)|^2\,
\mu^\eps_{0,\bx}(\bu)\,\dD \bu
\,\geq\, 1\,,
\]
we obtain that $\nu^\eps_0$ blows up in $\scP_2$ as $\eps$ vanishes
\[
\inf_{\bx \in K}
\int_{\R^2}
\,
|v|^2\,
\nu^\eps_{0,\bx}(\bu)\,\dD \bu
\,\geq\, \eps^{-1}\,.
\]
This is why we prove in Proposition \ref{prop:2} some exponential
localizing effects with respect to the variable $t/\eps$, which also
appears in Theorem \ref{main:th}.} \\

Let us outline the main steps of the proof. First, we obtain in
Proposition \ref{prop:1} some uniform moment estimates for
$\mu^\eps$. Second, in Proposition \ref{prop:2}, we estimate a
relative energy, which should be interpreted as the moments of the
re-scaled quantity $\nu^\eps$ after a proper re-normalization, as
mentioned in Remark \ref{moment nu eps}. The last step consists in the
convergence estimate. We focus on the re-scaled quantity $\nu^\eps$
and develop an analytical coupling method in order to estimate its
Wasserstein distance with the limiting profile. The key idea is to
consider a coupled equation which is solved by couplings between
$\nu^\eps$ and its limit and then to estimate some energy for the
solutions to the coupled equation. {The improvement with respect to
\cite{limite-hydro-crevat} is twofold. On the one hand, we prove error
estimate not only on the macroscopic quantities but also on the distribution functions by using the Wasserstein
distance. On the other hand, by considering diffusive effect in $v$
and rescaled variables, we can characterize the asymptotic profile of
the distribution function and then get a better convergence rate.}

Let us now mention some interpretations and consequences of our result. The first consequence of the latter result is the convergence of the averaged quantities 
$
\mathcal{V}^\eps
$
and
$
\mathcal{W}^\eps
$.
In fact, we prove a finer result since we obtain that the couple 
$
\left(
\mathcal{V}^\eps,\,
\bar{\mu}^\eps
\right)
$
converges towards 
$
\left(
\mathcal{V},\,
\bar{\mu}
\right)
$
with rate $\eps$.
\begin{corollary}
Under the assumptions of Theorem \ref{main:th} and supposing the initial errors to be of order $\eps$, that is
\[
\ds
\mathcal{E}_{\mathrm{mac}}
\,+\,
 \mathcal{E}_{\mathrm{mes}}
\,
\underset{\eps \rightarrow 0}{=}
\,
O
\left(
\eps
\right)\,,
\]
there exists 
$(C,\eps_0)\, \in \, 
\left(
\R^+_*
\right)^2
$
such that for all $\eps \leq \eps_0$,
\begin{equation*}
\sup_{\bx \in K}
\left(
\left|\,
\mathcal{V}^\varepsilon
\left(t,\bx\right)
\,-\,\mathcal{V}
\left(t,\bx\right)\,
\right|
\,
+\,
W_2
\left(\bar{\mu}^\varepsilon_{t,\bx},\, 
\bar{\mu}_{t,\bx}
\right)
\right)
\,
\leq
\,
C\,
\left(
e^{- m_* \,t /\eps}
\,+\,
\min{\left(
e^{C\,t}
\eps,\, 1\right)}
\right)\,,
\end{equation*}
for all $t\,\geq\,0$, where $m_*$ is given by \eqref{hyp:rho0}.
\end{corollary}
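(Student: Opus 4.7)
The plan is to reduce the statement directly to the second estimate of Theorem \ref{main:th} via the standard projection/marginalisation property of $W_2$. Recall from \eqref{macro:q} that $\mathcal{V}^\eps(t,\bx)$ is the first $v$-moment of $\mu^\eps_{t,\bx}$, while by construction the shifted Gaussian factor $\mathcal{M}_{\rho_0(\bx)/\eps,\,\mathcal{V}(t,\bx)}(v) = \mathcal{M}_{\rho_0(\bx)/\eps}(v-\mathcal{V}(t,\bx))$ is centred at $\mathcal{V}(t,\bx)$, so that the $v$-mean of $\mathcal{M}_{\rho_0(\bx)/\eps,\,\mathcal{V}(t,\bx)}\otimes\bar{\mu}_{t,\bx}$ is exactly $\mathcal{V}(t,\bx)$. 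Likewise, the $w$-marginals of $\mu^\eps_{t,\bx}$ and of the tensor product are respectively $\bar{\mu}^\eps_{t,\bx}$ and $\bar{\mu}_{t,\bx}$. Hence both quantities appearing on the left-hand side of the corollary can be extracted from any coupling realising the $W_2$ distance bounded in Theorem \ref{main:th}.

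Concretely, fix $(t,\bx)$ and pick an optimal coupling $\pi$ between $\mu^\eps_{t,\bx}$ and $\mathcal{M}_{\rho_0(\bx)/\eps,\,\mathcal{V}(t,\bx)}\otimes\bar{\mu}_{t,\bx}$, with variables $(v,w)$ and $(v',w')$. Jensen's inequality, applied to the probability measure $\pi$, yields
\[
|\mathcal{V}^\eps(t,\bx) - \mathcal{V}(t,\bx)|^2 \,=\, \left|\int (v-v')\,\dD\pi\right|^2 \,\leq\, \int |v-v'|^2 \,\dD\pi\,,
\]
while the push-forward of $\pi$ under $(v,w,v',w') \mapsto (w,w')$ is an admissible coupling of $\bar{\mu}^\eps_{t,\bx}$ and $\bar{\mu}_{t,\bx}$, so that $W_2^2(\bar{\mu}^\eps_{t,\bx}, \bar{\mu}_{t,\bx}) \leq \int |w-w'|^2 \,\dD\pi$. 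Summing the two inequalities, the integrand becomes the squared Euclidean norm on $\R^2$, whence
\[
|\mathcal{V}^\eps(t,\bx) - \mathcal{V}(t,\bx)|^2 + W_2^2(\bar{\mu}^\eps_{t,\bx}, \bar{\mu}_{t,\bx}) \,\leq\, W_2^2\!\left(\mu^\eps_{t,\bx},\,\mathcal{M}_{\rho_0(\bx)/\eps,\,\mathcal{V}(t,\bx)}\otimes\bar{\mu}_{t,\bx}\right),
\]
and then $|\mathcal{V}^\eps - \mathcal{V}| + W_2(\bar{\mu}^\eps, \bar{\mu}) \leq \sqrt{2}\, W_2(\mu^\eps,\, \mathcal{M}\otimes \bar{\mu})$ follows from $a+b \leq \sqrt{2(a^2+b^2)}$.

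It then suffices to insert the second estimate of Theorem \ref{main:th}, which applies thanks to $\mathcal{E}_{\mathrm{mac}} + \mathcal{E}_{\mathrm{mes}} = O(\eps)$, and to use the uniform lower bound $\rho_0^\eps(\bx) \geq m_*$ from assumption \eqref{hyp:rho0} to majorise $e^{-\rho_0^\eps(\bx)\,t/\eps}$ by $e^{-m_*\,t/\eps}$. Taking the supremum over $\bx \in K$ then yields the stated inequality. I do not expect any serious obstacle here: the corollary is essentially the projection of Theorem \ref{main:th} onto the two scalar/marginal quantities of interest, and the argument rests only on identifying the mean of the Gaussian factor and invoking the classical non-expansiveness of marginalisation and of first-moment extraction with respect to $W_2$.
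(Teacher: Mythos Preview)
Your proposal is correct and follows essentially the same approach as the paper: Jensen's inequality to bound $|\mathcal{V}^\eps-\mathcal{V}|$ by the full $W_2$ distance, the non-expansiveness of marginalisation to bound $W_2(\bar{\mu}^\eps,\bar{\mu})$ by the same quantity, and then a direct application of the second estimate in Theorem \ref{main:th} together with $\rho_0^\eps\geq m_*$ from \eqref{hyp:rho0}. The only cosmetic difference is that you combine the two bounds via $a+b\leq\sqrt{2(a^2+b^2)}$ before invoking the theorem, whereas the paper bounds each term separately; this makes no difference.
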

\begin{proof}
Using Jensen's inequality,
we check that
\[
\left|
\mathcal{V}^\eps(t,\bx)
\,-\,
\mathcal{V}(t,\bx)
\right|
\,\leq\,
W_2\left(
\mu^\eps_{t,\bx},\,
\mathcal{M}_{\frac{\rho_0(\bx)}{\eps},\mathcal{V}(t,\bx)}
\otimes
\bar{\mu}_{t,\bx}
\right)\,.
\]
Furthermore, we use the definition of the Wasserstein distance to obtain
\[
W_2
\left(\bar{\mu}^\varepsilon_{t,\bx},\, 
\bar{\mu}_{t,\bx}
\right)
\,\leq\,
W_2\left(
\mu^\eps_{t,\bx},\,
\mathcal{M}_{\frac{\rho_0(\bx)}{\eps},\mathcal{V}(t,\bx)}
\otimes
\bar{\mu}_{t,\bx}
\right)\,.
\]
Then we apply Theorem \ref{main:th} and replace $\rho_0^\eps$ with its lower bound $m_*$ given in assumption \eqref{hyp:rho0}.
\end{proof}

Another interesting consequence of Theorem \ref{main:th}, which may be
interpreted as the expansion of $f^\eps$ at order $1$ when $\ds\eps
\ll 1$, consists in recovering order $0$. In fact, we prove a stronger
result. Indeed, let us make the analogy with other types of
expansions as Taylor expansions. The expansion at order $1$ yields an equivalence result at order $0$. This is exactly what we obtain in our case: we prove that the distance between $f^\eps$ and
$\ds
\delta_{\mathcal{V}}
\otimes
\bar{\mu}
$ is exactly of order $\ds\sqrt{\eps}$.
This justifies our approach for two reasons. First, it means that it
is not possible to achieve convergence at order $\eps$ if we restrict the analysis to the convergence towards a Dirac mass. Second, the choice of the Wasserstein metric in our analysis instead of another stronger norm enables to compare easily our result with the convergence of $f^\eps$ towards a Dirac mass.
\begin{corollary}\label{order 0}
Under the assumptions of Theorem \ref{main:th} and supposing the initial errors to be of order $\sqrt{\eps}$, that is
\[
\ds
\mathcal{E}_{\mathrm{mac}}
\,+\,
 \mathcal{E}_{\mathrm{mes}}
\,
\underset{\eps \rightarrow 0}{=}
\,
O
\left(
\sqrt{\eps}
\right)\,,
\]
there exists 
$(C,\eps_0)\, \in \, 
\left(
\R^+_*
\right)^2
$, such that for all $\eps \leq \eps_0$
\begin{equation*}
\sup_{\bx \in K}
\left(
W_2\left(
\mu^\eps_{t,\bx},\,
\delta_{\mathcal{V}(t,\bx)}
\otimes
\bar{\mu}_{t,\bx}
\right)
\right)
\,
\leq
\,
C\,
\left(
e^{- m_* \,t /\eps}
\,+\,
\min{\left(
e^{Ct}
\sqrt{\eps},\, 1\right)}
\right)\,,
\end{equation*}
for all $t\,\geq\,0$, where $m_*$ is given by \eqref{hyp:rho0}.\\
Moreover, supposing the initial errors to be of order $\eps$, that is
\[
\ds
\mathcal{E}_{\mathrm{mac}}
\,+\,
 \mathcal{E}_{\mathrm{mes}}
\,
\underset{\eps \rightarrow 0}{=}
\,
O
\left(
\eps
\right)\,,
\]
and considering two positive times $t_0$ and $T$ such that $t_0\,<\,T$, there exists 
$(C,\eps_0)\, \in \, 
\left(
\R^+_*
\right)^2
$
such that for all $\eps \leq \eps_0$
\begin{equation*}
C^{-1}
\sqrt{\eps}
\,\leq\,
W_2\left(
\mu^\eps_{t,\bx},\,
\delta_{\mathcal{V}(t,\bx)}
\otimes
\bar{\mu}_{t,\bx}
\right)
\,
\leq
\,
C
\sqrt{\eps}\,,
\quad
\forall
(t,\bx) \in [\,t_0\,,\,T\,] \times K\,.
\end{equation*}
\end{corollary}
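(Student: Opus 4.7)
The plan is to deduce the corollary directly from Theorem \ref{main:th} by the (reverse) triangle inequality, after computing the exact $W_2$ distance between the Gaussian factor $\mathcal{M}_{\rho_0(\bx)/\eps,\mathcal{V}(t,\bx)}\otimes\bar\mu_{t,\bx}$ and its Dirac collapse $\delta_{\mathcal{V}(t,\bx)}\otimes\bar\mu_{t,\bx}$. The key auxiliary identity is the tensorisation property
$$W_2^2\bigl(\nu_1\otimes\bar\mu_{t,\bx},\,\nu_2\otimes\bar\mu_{t,\bx}\bigr)\,=\,W_2^2(\nu_1,\nu_2)\,,\qquad \nu_1,\nu_2\in\scP_2(\R)\,,$$
where the upper bound follows from the product coupling combined with the identity on $\bar\mu_{t,\bx}$, and the lower bound from projecting any coupling onto the first coordinate and dropping the non-negative $|w-w'|^2$ term. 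Applying this with $\nu_1=\mathcal{M}_{\rho_0(\bx)/\eps,\mathcal{V}(t,\bx)}$ and $\nu_2=\delta_{\mathcal{V}(t,\bx)}$ and using the explicit variance of the Gaussian yields
$$W_2\bigl(\mathcal{M}_{\rho_0(\bx)/\eps,\mathcal{V}(t,\bx)}\otimes\bar\mu_{t,\bx},\,\delta_{\mathcal{V}(t,\bx)}\otimes\bar\mu_{t,\bx}\bigr)\,=\,\sqrt{\eps/\rho_0(\bx)}\,,$$
which by assumption \eqref{hyp:rho0} lies in $[\sqrt{\eps\,m_*},\,\sqrt{\eps/m_*}]$.

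For the first assertion (upper bound under $\mathcal{E}_{\mathrm{mac}}+\mathcal{E}_{\mathrm{mes}}=O(\sqrt{\eps})$), I start from the triangle inequality
$$W_2\bigl(\mu^\eps_{t,\bx},\,\delta_{\mathcal{V}(t,\bx)}\otimes\bar\mu_{t,\bx}\bigr)\,\leq\,W_2\bigl(\mu^\eps_{t,\bx},\,\mathcal{M}_{\rho_0/\eps,\mathcal{V}}\otimes\bar\mu_{t,\bx}\bigr)\,+\,\sqrt{\eps/\rho_0(\bx)}\,,$$
and apply Theorem \ref{main:th}. With $\mathcal{E}_{\mathrm{mac}}+\eps\leq C\sqrt{\eps}$ and $\mathcal{E}_{\mathrm{mes}}\,e^{-bt}\leq C\sqrt{\eps}\leq C\min(e^{Ct}\sqrt\eps,1)$, the right-hand side of Theorem \ref{main:th} is controlled by $C(\min(e^{Ct}\sqrt\eps,1)+e^{-m_* t/\eps})$, using $\rho_0^\eps\geq m_*$. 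The residual $\sqrt{\eps/\rho_0(\bx)}$ is itself dominated by a constant multiple of $\min(e^{Ct}\sqrt\eps,1)$, giving the first estimate after renaming constants.

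For the two-sided bound on $[t_0,T]\times K$ under $\mathcal{E}_{\mathrm{mac}}+\mathcal{E}_{\mathrm{mes}}=O(\eps)$, Theorem \ref{main:th} delivers
$$W_2\bigl(\mu^\eps_{t,\bx},\,\mathcal{M}_{\rho_0/\eps,\mathcal{V}}\otimes\bar\mu_{t,\bx}\bigr)\,\leq\,C\bigl(\min(e^{CT}\eps,1)+e^{-m_* t_0/\eps}\bigr)\,\leq\,C'\,\eps\,,$$
for all $\eps\leq\eps_0(T)$ small enough, since the exponential factor in $t_0/\eps$ is super-polynomially small and $\min(e^{CT}\eps,1)=e^{CT}\eps$ once $\eps\leq e^{-CT}$. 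The upper bound $W_2(\mu^\eps_{t,\bx},\,\delta_{\mathcal{V}}\otimes\bar\mu_{t,\bx})\leq C\sqrt\eps$ follows by the same triangle inequality as before. For the matching lower bound, the reverse triangle inequality together with the Gaussian-to-Dirac identity gives
$$W_2\bigl(\mu^\eps_{t,\bx},\,\delta_{\mathcal{V}}\otimes\bar\mu_{t,\bx}\bigr)\,\geq\,\sqrt{\eps/\rho_0(\bx)}\,-\,W_2\bigl(\mu^\eps_{t,\bx},\,\mathcal{M}_{\rho_0/\eps,\mathcal{V}}\otimes\bar\mu_{t,\bx}\bigr)\,\geq\,\sqrt{\eps\,m_*}\,-\,C'\,\eps\,,$$
which is at least $\tfrac12\sqrt{\eps\,m_*}$ after shrinking $\eps_0$ so that $C'\sqrt{\eps_0}\leq\tfrac12\sqrt{m_*}$. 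The only conceptual point beyond invoking Theorem \ref{main:th} is the tensorisation identity, together with the \emph{rate-gap} observation that Theorem \ref{main:th} converges at rate $\eps$ while the intrinsic Gaussian-to-Dirac distance is of order $\sqrt\eps$; it is exactly this gap that allows the lower bound to be extracted via the reverse triangle inequality, and its absence is what makes the convergence $\mu^\eps\to\delta_{\mathcal{V}}\otimes\bar\mu$ no better than $\sqrt\eps$.
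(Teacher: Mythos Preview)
Your proof is correct and follows exactly the approach the paper indicates: the paper's own proof is the single line ``direct consequence of Theorem \ref{main:th} and the triangular inequality for $W_2$,'' and you have simply fleshed out those details, including the tensorisation identity and the explicit Gaussian-to-Dirac distance $\sqrt{\eps/\rho_0(\bx)}$ that make the argument run. Nothing is missing or superfluous.
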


\begin{proof}
The proof is a direct consequence of Theorem \ref{main:th} and the triangular inequality for $W_2$.
\end{proof}


\section{A priori estimates}
\label{sec:3}

In this section, we provide uniform estimates with respect to $\eps$ for the moments of $\mu^\eps$ and  for the relative energy given by
\begin{equation*}
\left\{
\begin{array}{l}
\ds M_{q}
\left[\,\mu^\eps\,
\right](t,\bx)
\,:=\,\int_{\R^2}
|\bu|^{q}
\,\dD\mu^\eps_{t,\bx}(\bu)\,,
\\[1.1em]
\ds D_{q}
\left[\,\mu^\eps\,
\right](t,\bx)\,:=\, 
\int_{\R^2} |v- \cV^\eps(t,\bx)|^{q} \,\dD\mu^\eps_{t,\bx}(\bu)\,,
\end{array}\right.
\end{equation*}
where $q\geq 2$.
\\

The key point here is to obtain uniform estimates with respect to time for both $M_{q}^\eps$ and $D_{q}^\eps$ using confining properties of $N$ and $A$. It is actually the only place where we use the super-linear {confinement} of the drift $N$  \eqref{hyp1:N}.

\begin{proposition}[Propagation of moment]
\label{prop:1}
Under assumptions \eqref{hyp1:N} on the drift $N$ and \eqref{hyp1:psi}-\eqref{hyp2:psi} on the interaction kernel $\Psi$, consider a sequence of solutions $(\mu^\eps)_{\eps\,>\,0}$ to \eqref{kinetic:eq} with initial conditions satisfying assumptions \eqref{hyp:rho0}-\eqref{hyp2:f0}. Then, for all positive $\eps$ and all $q$ lying in 
$
\ds
[\,2,\,2p\,]$ holds the following estimate
\[
M_{q}[\,\mu^\eps\,](t,\bx)
\,\,\leq\,\,
M_{q}[\,\mu^\eps\,](0,\bx)\,\exp\left(-{t}/{C}\right) \,+\, C, \quad\forall \,(t,\bx)\, \in\,\R^+\times K\,,
\]
where $C>0$ is a positive constant which only depends on $m_*$, $\ols{m}_p$ and the data of the problem: $\Psi$,~$A_0$ and $N$.
\end{proposition}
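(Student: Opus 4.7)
The plan is to test equation \eqref{kinetic:eq} against the weight $|v|^q + |w|^q$, which is pointwise equivalent to $|\bu|^q$ up to multiplicative constants depending only on $q$, and derive a Gronwall-type differential inequality
\[
\frac{\dD}{\dD t}M_q[\mu^\eps](t,\bx) \,\leq\, -\,c\,M_q[\mu^\eps](t,\bx) \,+\, C,
\]
with $c$ and $C$ independent of $\eps,t,\bx$. The motivation for splitting into $v$- and $w$-moments is that the strong interaction operator $(\rho_0^\eps/\eps)\,\partial_v[(v-\cV^\eps)\mu^\eps]$, which carries the troublesome $1/\eps$ factor, is a pure divergence in $v$. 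Its contribution to the $w$-moment $M_q^w := \int |w|^q \mu^\eps\,\dD\bu$ therefore vanishes identically, whereas its contribution to $M_q^v := \int|v|^q\mu^\eps\,\dD\bu$ equals $-(q\rho_0^\eps/\eps)[M_q^v - \cV^\eps J_{q-1}]$ with $J_{q-1} := \int v|v|^{q-2}\mu^\eps\,\dD\bu$. Since $|J_{q-1}| \leq (M_q^v)^{(q-1)/q}$ and $|\cV^\eps|\leq (M_q^v)^{1/q}$ by Jensen's inequality, the bracket is nonnegative and the $1/\eps$ contribution is dissipative, or at worst null.

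With the $1/\eps$ term rendered harmless, the remainder is at $O(1)$. For $M_q^v$, writing $vN(v) = v^2\omega(v)$, the super-linear confinement \eqref{hyp1:N} yields dissipation with arbitrary rate: for any prescribed $\Lambda > 0$ I choose $R = R(\Lambda)$ such that $\omega(v) \leq -\Lambda$ on $\{|v|\geq R\}$, obtaining $q\int|v|^q\omega(v)\mu^\eps\,\dD\bu \leq -q\Lambda M_q^v + C(\Lambda)$. For $M_q^w$, the contribution of $wA(v,w) = w(av-bw+c)$ provides the dissipation $-qb\,M_q^w$ from the quadratic $-bw^2$ part. The cross terms $-q\int|v|^{q-2}vw\mu^\eps\,\dD\bu$ and $qa\int|w|^{q-2}vw\mu^\eps\,\dD\bu$, the linear contribution $qc\int|w|^{q-2}w\mu^\eps\,\dD\bu$, and the diffusion term $q(q-1)M_{q-2}^v$ are all absorbed by Young's inequality into a small multiple of $M_q^v + M_q^w$ plus a constant.

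The \textbf{main obstacle} is the nonlocal drift $-q\int|v|^{q-2}v\,\cK_\Psi\mu^\eps\,\dD\bu$. Using $\cK_\Psi = (\Psi *_r\rho_0^\eps)\,v\,-\,\Psi *_r(\rho_0^\eps\cV^\eps)$, the first part contributes $-q(\Psi *_r\rho_0^\eps)\,M_q^v$, which is dominated by the super-confinement by taking $\Lambda$ larger than the uniform bound $\|\Psi *_r \rho_0^\eps\|_{L^\infty(K)}$, finite by \eqref{hyp:rho0} and \eqref{hyp1:psi}. The second part is bounded via Hölder by $\|\Psi(\bx,\cdot)\|_{L^r(K)}\,\|\rho_0^\eps\cV^\eps\|_{L^{r'}(K)}$, the first factor being uniformly controlled by \eqref{hyp2:psi}. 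To control $\|\rho_0^\eps\cV^\eps\|_{L^{r'}(K)}$ uniformly in time, I apply Jensen's inequality to obtain $|\cV^\eps|^{r'} \leq \int|v|^{r'}\mu^\eps\,\dD\bu$ and propagate the spatial moment $\int_{K\times\R^2}|\bu|^{r'}\rho_0^\eps\mu^\eps\,\dD\bu\,\dD\bx$ from its initial bound, provided by assumption \eqref{hyp2:f0}, by running the same kind of Gronwall estimate on this spatially-averaged quantity in parallel. Choosing $\Lambda$ large enough to dominate every residual $O(1)$ coefficient and applying Gronwall to the resulting differential inequality for $M_q^v + M_q^w$ then yields the claimed bound after translating back to $M_q$.
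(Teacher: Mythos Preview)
Your proposal is correct and follows essentially the same two-step strategy as the paper: first propagate a spatially-averaged moment $\ols{M}[\rho_0^\eps\mu^\eps]$ to obtain a uniform-in-time bound that controls the nonlocal convolution, then close the pointwise Gronwall inequality for $M_q[\mu^\eps](t,\bx)$. The only tactical difference is the order in which you apply H\"older and Young to the term $\Psi*_r(\rho_0^\eps\cV^\eps)\int v|v|^{q-2}\mu^\eps$: you H\"older first in $\bx'$ and thereby need only the auxiliary moment $\ols{M}_{r'}$, whereas the paper applies Young first on $|\cV^\eps(\bx')|\cdot|v|^{q-1}$, then Jensen, then H\"older, and ends up propagating $\ols{M}_{qr'}$. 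Both close; yours is marginally leaner.

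Two small points to tidy up. First, the bound $\|\Psi*_r\rho_0^\eps\|_{L^\infty(K)}<\infty$ follows from \eqref{hyp2:psi} (together with $K$ compact and \eqref{hyp:rho0}), not from \eqref{hyp1:psi}, which controls integration in the \emph{first} variable; \eqref{hyp1:psi} is what you need instead when you integrate the nonlocal term in $\bx$ for the spatially-averaged estimate. Second, your auxiliary moment has exponent $r'$, but nothing guarantees $r'\geq 2$; if $r'<2$ the diffusion term tested against $|v|^{r'}$ produces $\int|v|^{r'-2}\mu^\eps$, which is not controlled by moments. The fix is immediate: propagate $\ols{M}_{\max(2,r')}$ (the initial bound still comes from \eqref{hyp2:f0}), and use $\|\rho_0^\eps\cV^\eps\|_{L^{r'}(K)}\leq C\|\rho_0^\eps\cV^\eps\|_{L^{\max(2,r')}(K)}$ on the compact set $K$.
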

\begin{proof}
Let us choose an exponent $\theta \geq 2$, multiply equation \eqref{kinetic:eq} by $|\bu|^\theta/\theta$ and integrate with respect to $\bu \in \R^2$. Integrating by part, this leads us to the following relation
\begin{align*}
    \frac{1}{\theta}\,\frac{\dD}{\dD t}M_\theta[\mu^\eps](t,\bx)
\,    =\,
        \cI\,,
\end{align*}
where $\cI$ splits into
$
\cI \,=\,  \cI_{1}
\,+\, \cI_{2}
\,+\, \cI_{3}
\,+\, \cI_{4}
$ with 
\begin{equation*}
\left\{
\begin{array}{l}
    \displaystyle  \cI_{1} \,=\,
    -\frac{1}{\eps}\,
    \rho_0^\eps(\bx)
    \int_{\R^2}
    v\,|v|^{\theta-2}\,
    \left(v\,-\,\cV^\eps(t)\right)
    \,\mu^\eps_{t,\bx}(\bu)\,\dD \bu\,
    ,\\[0.9em]
    \displaystyle \cI_{2}
    \,=\,
    \int_{\R^2}v\,|v|^{\theta-2}\,N(v)
   \,\mu^\eps_{t,\bx}(\bu)\,\dD \bu\,,\\[0.9em]
    \displaystyle \cI_{3} \,=\,
    \int_{\R^2}
    \left(
    w\,|w|^{\theta-2}\, A(\bu)
    -v|v|^{\theta-2}\,w
    \,+\,
    (\theta-1)\,|v|^{\theta-2}
    \right)\,
    \mu^\eps_{t,\bx}(\bu)\,\dD \bu\,
    ,\\[0.9em]
    \displaystyle \cI_{4}\,=\,
    -
    \int_{\R^2} 
    v\,|v|^{\theta-2}\,
    \cK_{\Psi}[\rho_0^\eps\, \mu^\eps]
    \,\mu^\eps_{t,\bx}(\bu)\,\dD \bu\,.
    \end{array}
    \right.
    \end{equation*}
We first handle the stiff term $\cI_{1}$, which can be simply re-written as
\[
\cI_1
\,=\, -\frac{\rho_0^\eps(\bx)}{\eps}
\int_{\R^2}
\left(v\,|v|^{\theta-2}\,-\,\cV^\eps\,|\cV^\eps|^{\theta-2}\right)\,\left(v\,-\,\cV^\eps\right)\,\mu^\eps_{t,\bx}(\bu)\,\dD \bu
\,\leq\, 0\,.
\]
Then we evaluate $\cI_{2}$, which may involve higher order moments due to the non-linearity $N$. To overcome this difficulty, we split it in two parts
\[ 
v\,|v|^{\theta-2}\,N(v) \,=\, 
|v|^{\theta}\,\frac{N(v)}{v}\, 
\mathds{1}_{|v| \geq 1}
\,\,+\,\,
v\,|v|^{\theta-2}\,N(v)
\,\mathds{1}
_{|v| < 1}\,.
\]
According to assumption \eqref{hyp1:N} and since $N$ is continuous, we obtain
\[
\cI_{2}
\,\leq\,
\int_{\R^2}
\left(C\,-\,\omega^-(v)\right)\,
|v|^{\theta} \,\mu^\eps_{t,\bx}(\bu)\,\dD \bu
\,+\, C\,,
\]
for some constant $C>0$ only depending on $N$ and where $\omega^-$ is the following nonnegative function
\[
\omega^-(v)
\,=\,
\left(
\omega(v) \,\,\mathds{1}_{|v| \geq 1},
\right)^{-}\,,
\]
with $s^-=\max(0,-s)$ and where $\omega$ is given by \eqref{hyp1:N}.

Now we evaluate $\cI_{3}$, which gathers low order terms.
Applying Young's inequality, we obtain the following estimate
\[
\cI_{3}
\,\leq\,
\frac{C}{\eta^\theta}
\,\int_{\R^2}|v|^\theta \,\mu^\eps_{t,\bx}(\bu)\,\dD \bu
\,+\,
\int_{\R^2}(C\,\eta\,-\,b)\,|w|^\theta\,\mu^\eps_{t,\bx}(\bu)\,\dD \bu
\,+\,
\frac{C}{\eta^\theta}\,,
\]
for some constant $C>0$ and all $\eta \in ]0,1[$.

Finally, to evaluate the non-local term $\cI_{4}$, we estimate $\cV^\eps$ by applying Jensen's inequality, which yields
\[
\left|
\cV^\eps
\right|^\theta
\,\leq\,
\int_{\R^2}
|v|^\theta \,\mu^\eps_{t,\bx}(\bu)\,\dD \bu\,,
\]
hence, applying Young's inequality, we obtain that
\[
\cI_{4}
\,\leq\,
C\,
\left(
\|
\Psi
*_r
\rho_0^\eps
\|_{L^\infty(K)}
\,\int_{\R^2}
|v|^\theta\,\mu^\eps_{t,\bx}(\bu)\,\dD \bu
\,+\,
\int_{K\times \R^{2}}
\left|
\Psi(\bx,\bx')
\right|
\,|v|^\theta \,
 \rho_0^\eps(\bx')\, \mu^\eps_{t,\bx'}(\bu)\,\dD\bu\,\dD\bx'
\right)\,,
\]
where $C$ is a positive constant only depending on $\theta$. Then we use
Hölder's inequality and
assumption \eqref{hyp2:psi} \& \eqref{hyp:rho0} (we do not use the constraint $r>1$ here), which yields
\[
\|\,
\Psi
*_r
\rho_0^\eps\,
\|_{L^\infty(K)}\,
\leq\,
\|\,
\rho_0^\eps\,
\|_{L^\infty(K)}
\sup_{\bx \in K}
\|\,
\Psi(\bx,\cdot)\,
\|_{L^r(K)}\,
\leq\, C\,,
\]
for some $C>0$ independent of $\eps$.\\
In the former computations we choose $\eta$ such that $b\,-\,C\eta\,>\,0$. With the notation $\alpha\,=\,b\,-\,C\eta$, this yields
\begin{equation}
\label{Mp:0}
\begin{array}{ll}
\ds
\frac{1}{\theta}\,\frac{\dD}{\dD t}\,M_\theta[\,\mu^\eps\,](t,\bx)\, 
& 
\ds\leq\, 
\int_{\R^2}
\left[\,
\left(C
\,-\,
\omega^-(v)
\right)\,|v|^\theta
\,-\,\alpha\, |w|^{\theta}\,
\right]\, \mu^\eps_{t,\bx}(\bu)\,\dD \bu
\,+\,C
\\[0,8em]
&\ds\,+\,C\,\int_{K\times \R^{2}}
\left|
\Psi(\bx,\bx')
\right|
|v|^\theta
\,
 \rho_0^\eps(\bx')\, \mu^\eps_{t,\bx'}(\bu)\,\dD\bu\,\dD\bx'\,,
\end{array}
\end{equation}
for another constant $C>0$ depending on $\theta$, $m_*$, $A$, $N$ and $\Psi$ but not on 
$(t,\,\bx) \in \R^+ \times K$ nor on $\eps$. 

Now, we fix $q$ in $[2,\,2p]$ and proceed in two steps. On the one hand, choosing $\theta=qr'\geq 2$ in \eqref{Mp:0}, we evaluate the averaged moments $\ols{M}_{qr'}[\rho_0^\eps\, \mu^\eps]$ given by
\[
\ols{M}_{qr'}\left[\,
 \rho_0^\eps\, \mu^\eps\,\right](t)
\,=\,
\int_{K\times\R^2} |\bu|^{qr'}\,
 \rho_0^\eps(\bx)\, \mu^\eps_{t,\bx}(\bu)\,\dD\bu\,\dD\bx\,,
\]
where $r'$ is given by \eqref{hyp2:psi}.

On the other hand, choosing $\theta=q$ in \eqref{Mp:0}, we use the latter estimate to control the non-local contribution on the right hand side of \eqref{Mp:0} and evaluate $M_{q}[\mu^\eps](t,\bx)$ at each $(t,\bx)\in\R^+\times K$.

Starting from \eqref{Mp:0} with $\theta=qr'\geq 2$, we multiply it by $\rho_0^\eps(\bx)$ and integrate with respect to $\bx\in K$, which yields
\begin{align*}
\frac{1}{qr'}\,\frac{\dD}{\dD t}\ols{M}_{qr'}\left[\,
 \rho_0^\eps\, \mu^\eps\,\right]
&\,\leq\,
\int_{K\times \R^2}
\left(
\left(C
\,-\,
\omega^-(v)
\right)\,|v|^{qr'} 
-\,\alpha\,
|w|^{qr'}\right)\,
 \rho_0^\eps(\bx)\, \mu^\eps_{t,\bx}(\bu)\,\dD\bu\,\dD\bx
\\[0.9em]
&\,+\,
C\,\int_{K\times K\times \R^2}
|\Psi(\bx,\bx')|
\,|v|^{qr'}\,
\rho_0^\eps(\bx)
\,\,
 \rho_0^\eps(\bx')\, \mu^\eps_{t,\bx'}(\bu)\,\dD\bu\,\dD\bx'\,\dD\bx\,+\, C\,.
\end{align*}
According to assumption \eqref{hyp1:psi} \& \eqref{hyp:rho0}, we have 
\[
\int_{K\times K\times \R^2}
|\Psi(\bx,\bx')|
\,|v|^{qr'}\,
\rho_0^\eps(\bx)
\,\,
 \rho_0^\eps(\bx')\, \mu^\eps_{t,\bx'}(\bu)\,\dD\bu\,\dD\bx'\,\dD\bx
\,\leq \,C\,
\int_{K\times \R^2}
|v|^{qr'}\,
 \rho_0^\eps(\bx)\, \mu^\eps_{t,\bx}(\bu)\,\dD\bu\,\dD\bx\,,
\]
for some positive constant $C$ depending on $m_*$ and $\Psi$. Hence it yields
\[
\frac{1}{qr'}\,\frac{\dD}{\dD t}\,\ols{M}_{qr'}\left[\,
 \rho_0^\eps\, \mu^\eps\,\right](t)
\leq 
\int_{K\times \R^2}
\left(
\left(C
\,-\,
\omega^-(v)
\right)\,|v|^{qr'}
-\,\alpha\,
|w|^{qr'}\right)\,
 \rho_0^\eps(\bx)\, \mu^\eps_{t,\bx}(\bu)\,\dD\bu\,\dD\bx
\,+\,C\,.
\]
From assumption \eqref{hyp1:N},  $\omega^-(v)$ goes to infinity with $|v|$. Consequently, we are led to 
\[
\frac{1}{qr'}\,\frac{\dD}{\dD t}\, \ols{M}_{qr'}\left[\,
 \rho_0^\eps\, \mu^\eps\,\right](t)
\,\leq\,
C\,-\,\frac{1}{C}\, \ols{M}_{qr'}\left[\,
 \rho_0^\eps\, \mu^\eps\,\right](t),
\]
for $C>0$ great enough. Using Gronwall's lemma and the assumption \eqref{hyp2:f0} on the non-local moment of $\mu^\eps_0$, we obtain that for all $\eps > 0$,
\[
\ols{M}_{qr'}[\rho_0^\eps\, \mu^\eps](t) \,\leq\, C\,,\,\,
\forall t \in \R^+\,,
\]
where $C$ may depend on $\ols{m}_p$.

Now, we come back to the local estimate on the moment $M_{q}[\mu^\eps](t,\bx)$. We replace $\theta$ with $q$ in \eqref{Mp:0} and estimate the non-local contribution. First, we apply Hölder's inequality and assumptions \eqref{hyp2:psi} \& \eqref{hyp:rho0} to the non-local contribution in \eqref{Mp:0}. This gives
\[
\int_{K\times \R^{2}}
\left|
\Psi(\bx,\bx')
\right|
|v|^{q}\,
\rho_0^\eps(\bx')\, \mu^\eps_{t,\bx'}(\bu) \,\dD\bu\,\dD \bx'\,
\leq\,
C
\,
\left|
\ols{M}_{qr'}^\eps(t)
\right|^{1/r'}\,,
\]
where we emphasize that we use the constraint $r>1$ in the former estimate. 
Then we use the latter bound on $\ols{M}_{qr'}^\eps(t)$ and obtain
\[
\int_{K\times \R^{2}}
\left|
\Psi(\bx,\bx')
\right|
|v|^{q}\,
\rho_0^\eps(\bx')\, \mu^\eps_{t,\bx'}(\bu) \,\dD\bu\,\dD \bx'\,
\leq
C\,.
\]
Hence, it yields
\begin{equation*}
\frac{\dD}{\dD t}M_{q}[\mu^\eps](t,\bx) 
\,\leq\,
\int_{\R^2}
\left(
\left(C\,-\,\omega^-(v)
\right)\,|v|^{q}
\,-\,
\alpha
|w|^{q}
\right)\, \mu^\eps_{t,\bx}(\bu)\,\dD \bu \,+\,C\,.
\end{equation*}
Using the same arguments as before, we get some $C>0$ such that,
\[
\frac{\dD}{\dD t}
M_{q}[\mu^\eps](t,\bx) \,\leq\,
C\,-\,\frac{1}{C}\,M_{q}[\mu^\eps](t,\bx)\,,
\]
hence we conclude this proof applying Gronwall's lemma.
\end{proof}

As a straightforward consequence of Proposition \ref{prop:1}, we obtain uniform bounds with respect to time, space and $\eps$ for the macroscopic quantities
\begin{corollary}
\label{cor:1}
Under the assumptions of Proposition \ref{prop:1}, there exists a constant $C>0$, independent of $\eps$,  such that,
\[
\left|\cV^\eps(t,\,\bx)
\right| \,+\, \left|\cW^\eps(t,\,\bx)
\right|  \,\leq\, C, \quad \forall\,(t,\,\bx)\,\in\R^+\times K\,,
\]
where $\cV^\eps$ and $\cW^\eps$ are given by \eqref{macro:q}.
\end{corollary}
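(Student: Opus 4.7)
The plan is to derive the corollary as an immediate consequence of Proposition \ref{prop:1} applied with $q=2$, combined with Jensen's inequality on the definitions in \eqref{macro:q}.

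First, I observe that since $\mu^\eps_{t,\bx}$ is a probability measure on $\R^2$, the definition \eqref{macro:q} together with $\rho_0^\eps\,\mu^\eps = f^\eps$ gives
\[
\cV^\eps(t,\bx)\,=\,\int_{\R^2} v\,\mu^\eps_{t,\bx}(\bu)\,\dD\bu,
\qquad
\cW^\eps(t,\bx)\,=\,\int_{\R^2} w\,\mu^\eps_{t,\bx}(\bu)\,\dD\bu.
\]
Hence by Jensen's (or Cauchy--Schwarz) inequality,
\[
|\cV^\eps(t,\bx)|^2\,+\,|\cW^\eps(t,\bx)|^2\,\leq\,\int_{\R^2}|\bu|^2\,\mu^\eps_{t,\bx}(\bu)\,\dD\bu\,=\,M_2[\mu^\eps](t,\bx).
\]

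Second, since $p\geq 2$ we have $2\in[2,2p]$, so Proposition \ref{prop:1} applies with $q=2$ and yields
\[
M_{2}[\mu^\eps](t,\bx)\,\leq\,M_{2}[\mu^\eps](0,\bx)\,\exp(-t/C)\,+\,C,
\qquad\forall(t,\bx)\in\R^+\times K.
\]
The initial moment $M_{2}[\mu^\eps](0,\bx)$ is bounded uniformly in $\bx\in K$ and $\eps>0$: assumption \eqref{hyp1:f0} gives a uniform bound on $M_{2p}[\mu^\eps](0,\bx)$, and Hölder's inequality on the probability measure $\mu^\eps_{0,\bx}$ yields $M_{2}[\mu^\eps](0,\bx)\leq M_{2p}[\mu^\eps](0,\bx)^{1/p}\leq m_p^{1/p}$. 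Combining the two displays provides a constant $C>0$ independent of $(t,\bx,\eps)$ such that $M_2[\mu^\eps](t,\bx)\leq C$, from which the claimed bound $|\cV^\eps|+|\cW^\eps|\leq C$ follows after adjusting the constant.

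There is no real obstacle here: the corollary is essentially a direct reading of Proposition \ref{prop:1} at the exponent $q=2$, the only minor point being to check that the initial data assumption \eqref{hyp1:f0} indeed dominates the second-order moment uniformly in $\bx$ and $\eps$, which is immediate from Hölder since $\mu^\eps_{0,\bx}$ is a probability measure.
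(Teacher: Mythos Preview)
Your proof is correct and follows essentially the same route as the paper: bound $|\cV^\eps|$ and $|\cW^\eps|$ by $M_2[\mu^\eps]^{1/2}$ via Cauchy--Schwarz, then invoke Proposition~\ref{prop:1} with $q=2$. You are slightly more explicit than the paper in checking that the initial moment $M_2[\mu^\eps](0,\bx)$ is uniformly controlled via \eqref{hyp1:f0} and H\"older, which is a welcome clarification.
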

\begin{proof}
Applying the Cauchy-Schwarz inequality, we have for any $(t,\bx)\in\R^+\times K$,
\[
|\cV^\eps(t,\bx)| 
\,+\,
|\cW^\eps(t,\bx)|
\,\,\leq\,\, 2\,\left| M_2
[\mu^\eps]
(t,\bx)\right|^{1/2}\,,
\]
hence the result follows on from  Proposition \ref{prop:1}.
\end{proof}

We turn to the estimates for the relative energy $D_{q}[\mu^\eps]$, which quantifies the convergence of $\mu^\eps$ towards a Dirac mass centered on $\cV^\eps$.

\begin{proposition}[Relative energy]
\label{prop:2}
Under assumptions \eqref{hyp1:N}-\eqref{hyp2:N} on the drift $N$ and \eqref{hyp1:psi}-\eqref{hyp2:psi} on the interaction kernel $\Psi$, consider a sequence of solutions $(\mu^\eps)_{\eps\,>\,0}$ to \eqref{kinetic:eq} with initial conditions satisfying assumption \eqref{hyp:rho0}-\eqref{hyp2:f0}. There exists a positive constant $C>0$, which may depend on $m_p$ and $\ols{m}_p$, such that for all $\eps >0$ and all $q$ in 
$
\ds
[\,2,\,2p\,]$ holds the following estimate,
\[
D_q[\,\mu^\eps\,](t,\bx)
\,\,\leq\,\,
C \,\left[\,D_q[\,\mu^\eps\,](0,\bx) \,\exp \left(-\frac{q\rho_0^\eps(\bx)}{\eps}\,t \right)
\,+\,
\left(\frac{\eps}{\rho_0^\eps(\bx)}\right)^{q/2}\,\right], \quad\forall (t,\bx)\in\R^+\times K\,.
\]
\end{proposition}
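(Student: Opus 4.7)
The plan is to derive a differential inequality for $D_q[\mu^\eps](t,\bx)$ by differentiating it along the evolution prescribed by \eqref{kinetic:eq}, and to close the bound with Gr\"onwall's lemma. The key observation is that the stiff Fokker--Planck operator on the right-hand side of \eqref{kinetic:eq} provides a dissipation of rate $q\rho_0^\eps/\eps$ on $D_q$, which dominates the remaining drift and diffusion contributions once we exhibit the correct scalings. Concretely, multiplying \eqref{kinetic:eq} by $|v - \cV^\eps(t,\bx)|^q / q$ and integrating in $\bu \in \R^2$, taking into account the time dependence of $\cV^\eps$ via the chain rule and integrating by parts in $v$ and $w$, three contributions appear: the stiff dissipation $-\tfrac{q\rho_0^\eps(\bx)}{\eps}\, D_q[\mu^\eps]$; a diffusive correction $q(q-1)\int |v - \cV^\eps|^{q-2}\,\dD\mu^\eps_{t,\bx}$; and a drift correction $q \int |v - \cV^\eps|^{q-2}(v - \cV^\eps)(b_1^\eps - \partial_t \cV^\eps)\,\dD\mu^\eps_{t,\bx}$, where $b_1^\eps$ denotes the voltage component of $\mathbf{b}^\eps$. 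Invoking the first equation of \eqref{macro-eps:eq} for $\partial_t \cV^\eps$, the nonlocal convolution terms involving $\Psi *_r (\rho_0^\eps \cV^\eps)$ cancel exactly, so that
\[
b_1^\eps - \partial_t \cV^\eps \,=\, \bigl[N(v) - \bar N(\mu^\eps_{t,\bx})\bigr] \,-\, (w - \cW^\eps) \,-\, \bigl(\Psi *_r \rho_0^\eps(\bx)\bigr)(v - \cV^\eps),
\]
where $\bar N(\mu) := \int_{\R^2} N(v)\,\dD\mu$.

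Next I would bound each correction using the uniform moment estimates of Proposition \ref{prop:1} together with the $L^\infty$ bounds on $\cV^\eps, \cW^\eps$ from Corollary \ref{cor:1} and the polynomial growth \eqref{hyp2:N} of $N$. Applying H\"older's inequality in $\mu^\eps_{t,\bx}$ produces
\[
\Bigl|\int |v-\cV^\eps|^{q-2}(v-\cV^\eps)(b_1^\eps - \partial_t\cV^\eps)\,\dD\mu^\eps_{t,\bx}\Bigr| \,\leq\, C\, D_q^{(q-1)/q}, \qquad \int |v - \cV^\eps|^{q-2}\,\dD\mu^\eps_{t,\bx} \,\leq\, D_q^{(q-2)/q},
\]
with $C$ depending only on $m_p$, $\ols{m}_p$ and the data of the problem. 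The subcontribution $-\Psi *_r \rho_0^\eps(\bx)\cdot D_q$ extracted from the drift has the correct sign, and in any case $|\Psi *_r \rho_0^\eps|$ is uniformly bounded by \eqref{hyp2:psi} and \eqref{hyp:rho0}, so this term is absorbed into the stiff dissipation.

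To close the inequality, I apply Young's inequality $x^{(q-\ell)/q} \leq \tfrac{q-\ell}{q} Y\, x + \tfrac{\ell}{q} Y^{-(q-\ell)/\ell}$ for $\ell \in \{1,2\}$, with $Y$ chosen proportional to $\rho_0^\eps(\bx)/\eps$, so that the coefficient of $D_q$ remains strictly below $q\rho_0^\eps/\eps$ while the inhomogeneous remainder is controlled by $\tfrac{\rho_0^\eps}{\eps}(\eps/\rho_0^\eps)^{q/2}$. This produces
\[
\frac{\dD}{\dD t}\, D_q[\mu^\eps](t,\bx) \,\leq\, -\frac{q\rho_0^\eps(\bx)}{\eps}\, D_q[\mu^\eps](t,\bx) \,+\, \frac{C\,\rho_0^\eps(\bx)}{\eps}\,\Bigl(\frac{\eps}{\rho_0^\eps(\bx)}\Bigr)^{q/2},
\]
and Gr\"onwall's lemma yields the announced estimate. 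The main technical obstacle lies in the nonlinear contribution $N(v) - \bar N(\mu^\eps_{t,\bx})$ when $q$ approaches $2p$, since by \eqref{hyp2:N} the drift grows like $|v|^p$ while Proposition \ref{prop:1} only propagates moments of order at most $2p$. To remain within the available moments for every $q \in [2, 2p]$, I would split $N(v) - \bar N(\mu^\eps) = [N(v) - N(\cV^\eps)] + [N(\cV^\eps) - \bar N(\mu^\eps)]$, use the $\scC^2$ regularity of $N$ together with \eqref{hyp2:N} to control $N'$ polynomially, and tune the exponents in H\"older's inequality so that only moments $M_{q'}[\mu^\eps]$ with $q' \leq 2p$ are invoked.
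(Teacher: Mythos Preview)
Your overall strategy---differentiate $D_q$ along \eqref{kinetic:eq}, isolate the stiff dissipation $-q\rho_0^\eps/\eps\,D_q$, and control the remaining terms---is exactly the route the paper takes. However, your treatment of the nonlinear drift contains a genuine gap.

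The H\"older estimate $\bigl|\int |v-\cV^\eps|^{q-2}(v-\cV^\eps)(b_1^\eps-\partial_t\cV^\eps)\,\dD\mu^\eps\bigr|\leq C\,D_q^{(q-1)/q}$ requires $\|N(v)-N(\cV^\eps)\|_{L^q(\mu^\eps)}\leq C$, and under \eqref{hyp2:N} this calls for moments of order $pq$, which are unavailable as soon as $q>2$. Your proposed fix---bound $N'$ polynomially and tune the H\"older exponents---does not close the loop either: writing $|N(v)-N(\cV^\eps)|\leq C(1+|v|^{p-1})|v-\cV^\eps|$ leads to $\int(1+|v|^{p-1})|v-\cV^\eps|^{q}\,\dD\mu^\eps$, and any splitting via H\"older forces a moment of order at least $q+p-1$, which exceeds $2p$ once $q>p+1$. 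The paper bypasses this entirely by \emph{keeping the sign}: since $\cV^\eps$ is uniformly bounded (Corollary~\ref{cor:1}) and $N$ is confining in the sense of \eqref{hyp1:N}, one has the one-sided bound
\[
(v-\cV^\eps)\bigl(N(v)-N(\cV^\eps)\bigr)\,\leq\,C\,|v-\cV^\eps|^2,\qquad\forall\,v\in\R,
\]
so the $N$-contribution is simply $\leq C\,D_q$ with no higher moment needed. This is the step your proposal misses.

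A smaller point: your Young-inequality scheme with $Y\propto\rho_0^\eps/\eps$ absorbs a fraction of the stiff rate and yields decay $\exp(-(q-C\delta)\rho_0^\eps t/\eps)$ rather than the stated $\exp(-q\rho_0^\eps t/\eps)$. The paper obtains the exact rate by a different device: after reaching $\tfrac{1}{q}\tfrac{\dD}{\dD t}D_q+\tfrac{\rho_0^\eps}{\eps}D_q\leq C(D_q+D_q^{(q-2)/q})$, it sets $u=D_q^{2/q}$, first bounds $u\leq C$ uniformly via Proposition~\ref{prop:1} (so the right-hand side becomes a constant), and only then applies Gr\"onwall to $\tfrac{\dD u}{\dD t}+\tfrac{2\rho_0^\eps}{\eps}u\leq C$. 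This two-step argument delivers both the correct exponential rate and the correct power $(\eps/\rho_0^\eps)^{q/2}$ simultaneously.
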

\begin{proof}
We choose some $q$ in $[2,\,2p]$,
multiply equation \eqref{kinetic:eq} by  $|v-\cV^\eps|^q/q$ and integrate with respect to $\bu\in\R^2$. After integrating by part and using equation \eqref{macro:q}, it yields
\[
\displaystyle\frac{1}{q}\frac{\dD}{\dD t}
    D_q[\mu^\eps](t,\bx)
\,=\, \cJ\,,
\]
where $\cJ$ is split as
$$
\cJ\,=\,\cJ_{1}\,+\,\cJ_{2}\,+\,\cJ_{3}\,-\,
    \frac{\rho_0^\eps}{\eps}D_q[\mu^\eps](t,\bx)
    \,+\,
    (q-1)\,D_{q-2}^\eps[\mu^\eps](t,\bx)\,,
$$ 
with 
\begin{equation*}
\left\{
\begin{array}{l}
     \displaystyle  \cJ_{1} \,=\,
     -
   \,\int_{\R^2}
    (v\,-\,\cV^\eps)\,
    |v\,-\,\cV^\eps|^{q-2}\,
    \left(\cK_{\Psi}[\rho_0^\eps\,\mu^\eps]\,
    -\,
    \cL_{\rho_0^\eps}[\cV^\eps]
    \right)\,\mu^\eps_{t,\bx}(\bu)\,\dD \bu\,,
    \\[1.1em]
     \displaystyle \cJ_{2}
     \,=\,
    \int_{\R^2}
    (v\,-\, \cV^\eps)\,
    |v\,-\,\cV^\eps|^{q-2}\,
    \left(
    N(v)\,-\,N(\cV^\eps)
    \right)\,\mu^\eps_{t,\bx}(\bu)\,\dD \bu\,,\\[1.1em]
    \displaystyle \cJ_{3}\,=\,
    -\int_{\R^2}
    (v\,-\, \cV^\eps)\,
    |v\,-\,\cV^\eps|^{q-2}\,
    \left(\,
    w\,-\,\cW^\eps
    \,+\,
    \cE(\mu^\eps_{t,\bx})\,
    \right)\,\mu^\eps_{t,\bx}(\bu)\,\dD \bu\,.
    \end{array}
    \right.
    \end{equation*}
To estimate the non-local term $\cJ_{1}$, we observe that 
\[
\cK_{\Psi}[\rho_0^\eps\,\mu^\eps]
    \,-\, \cL_{\rho_0^\eps}(\cV^\eps)
\,=\,
(v\,-\,\cV^\eps)\,
\Psi *_r \rho_0^\eps(\bx)\,,
\]
hence, using assumptions \eqref{hyp2:psi} \& \eqref{hyp:rho0}, we obtain
\begin{align*}
    \cJ_{1}\,
    =\,-
    \,\Psi *_r \rho_0^\eps(\bx)\,
    \,D_q[\mu^\eps](t,\bx)
    \,\leq\, 
    C\, D_q[\mu^\eps](t,\bx)\,.
\end{align*}
We turn to $\cJ_{2}$, which  involves higher order moments since it displays the non-linearity $N$. On the one hand, Corollary \ref{cor:1} ensures that $\cV^\eps$ is uniformly bounded. On the other hand $N$ lies in $\scC^1(\R)$ and meets the confining assumption \eqref{hyp1:N}. Hence, there exists a constant $C>0$ independent of $\eps$ such that
\begin{equation*}
    (v-\cV^\eps)\,\left(N(v)\,-\,N(\cV^\eps)\right) 
    \,\leq\, C\,|v-\cV^\eps|^2\,, \quad \forall (t,\,\bx,\,v)\in\R^+\times K\times\R\,.
\end{equation*}
Thus, it yields
\[
\cJ_{2}
    \,\leq\,
    C\, D_{q}[\mu^\eps](t,\bx)\,.
\]
Finally we estimate $\cJ_{3}$, which gathers the low order terms. According to assumption \eqref{hyp2:N} on $N$  and applying Proposition \ref{prop:1}, we have for all positive $\eps$,
\[
\int_{\R^2}
    \left|
    w\,-\,\cW^\eps
    \right|^q
    \mu^\eps_{t,\bx}(\bu)\,\dD \bu
           \,+\,
|\cE(\mu^\eps_{t,\bx})|
\,\leq\, C\,, \quad \forall\,(t,\,\bx) \in \R^+ \times K\,,
\]
for some constant $C$ that may depend on $m_p$ and $\ols{m}_p$.
Hence, applying H\"older's inequality, it yields
\[
\cJ_{3}
\,\leq\, C \, D_q[\mu^\eps]^{{(q-1)}/{q}}(t,\bx)
\,\leq\,
C\, \left( D_q[\mu^\eps](t,\bx) \,+\,  D_{q}[\mu^\eps]^{{(q-2)}/{q}}(t,\bx) \right)\,.
\]
Gathering these computations and applying H\"older's inequality to $D_{q-2}^\eps$, we obtain
\[
\frac{1}{q}\,\frac{\dD}{\dD t}D_q[\mu^\eps](t,\bx)
\,+\, \frac{\rho_0^\eps}{\eps}D_q[\mu^\eps](t,\bx)
\,\leq\,
C
\,\left(
D_{q}[\mu^\eps](t,\bx)
\,+\, D_{q}[\mu^\eps]^{{(q-2)}/{q}}(t,\bx)
\right)\,.
\]
To estimate $D_{q}[\mu^\eps]$, we introduce the function $
\ds u\,=\,
\left(
D_q[\mu^\eps]
\right)^{2/q}$, which satisfies the following differential inequality
\[
\frac{1}{2}\,\frac{\dD u}{\dD t}
\,+\,\frac{\rho_0^\eps}{\eps}\,u
\,\leq\, C\,\left( u \,+\, 1\right)\,.
\]
Applying Proposition \ref{prop:1}, we get a first bound on $u$ since
$$
D_q[\mu^\eps](t,\bx)
\,\leq\, C\, \,M_{q}[\mu^\eps](t,\bx)
\,\leq\, C\,,
$$
hence, we substitute this estimate on the right hand side of the former differential inequality and obtain
\[\frac{\dD u }{\dD t}
\,+\,
\frac{2\,\rho_0^\eps}{\eps}\,u
\,\leq\, C\,,
\]
which implies that
\[
u(t) \,\leq\, u(0)\,\exp
\left(-\frac{2\rho_0^\eps\,t}{\eps}
\right)
\,+\, 
C\,\frac{\eps}{\rho_0^\eps}\, \left( 1 \,-\,\exp\left(-\frac{2\,\rho_0^\eps\,t}{\eps}\right)\right)\,.
\]
The result follows from replacing $u$ by $\left(D_q[\mu^\eps]\right)^{2/q}$.
\end{proof}

Both Propositions \ref{prop:1} \& \ref{prop:2} may be interpreted in terms of the re-scaling $\nu^\eps$ according to the following remark

\begin{remark}\label{moment nu eps}
Performing the change of variable \eqref{change:var} in the expression of $M_q[\mu^\eps]$ and $D_q[\mu^\eps]$, we obtain the following relations
\[
\left\{
\begin{array}{l}
\ds\int_{\R^2} |v|^q \, \nu^\eps_{t,\bx}(\bu)\,\dD\bu \,=\, \eps^{-q/2} D_q[\mu^\eps](t,\bx)\,,
\\[1.1em]
\ds\int_{\R^2}|w|^q \, \nu^\eps_{t,\bx}(\bu)\,\dD\bu \,=\, \int_{\R^2}|w-\cW^\eps|^q \, \mu^\eps_{t,\bx}(\bu)\,\dD\bu\,\leq\, C\,M_q[\mu^\eps](t,\bx)\,.
\end{array}\right.
\]
\end{remark}

To conclude this section, we deduce from Propositions \ref{prop:1} \& \ref{prop:2} the following error estimate

\begin{proposition}\label{estimate for the error 2}
Under assumptions \eqref{hyp1:N}-\eqref{hyp2:N} on the drift $N$, \eqref{hyp1:psi}-\eqref{hyp2:psi} on the interaction kernel $\Psi$ consider a sequence of solutions $(\mu^\eps)_{\eps\,>\,0}$ to \eqref{kinetic:eq} with initial conditions satisfying assumption \eqref{hyp:rho0}-\eqref{hyp2:f0}. There exists a constant $C>0$ such that for all $\eps >0$ we have
\[
\left|\mathcal{E}(\mu^\eps_{t,\bx})\right| 
\leq 
C\left(
e^{-2\rho_0^\eps(\bx) t/\eps}
+
\eps\right)\,,\quad\forall
(t,\bx) \in \R^+ \times\,K\,.
\]
\end{proposition}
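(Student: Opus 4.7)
The strategy is to exploit the fact that, by the very definition \eqref{macro:q} of $\mathcal{V}^\eps$, the mean of $v-\mathcal{V}^\eps$ under $\mu^\eps_{t,\bx}$ vanishes. This suggests performing a second-order Taylor expansion of $N$ around $\mathcal{V}^\eps$. Since $N\in\scC^2(\R)$, we may write
\[
N(v) \,=\, N(\mathcal{V}^\eps) \,+\, N'(\mathcal{V}^\eps)\,(v-\mathcal{V}^\eps) \,+\, R^\eps(v)\,(v-\mathcal{V}^\eps)^2\,,
\]
where $R^\eps(v) = \int_0^1 (1-s)\,N''\bigl(\mathcal{V}^\eps + s(v-\mathcal{V}^\eps)\bigr)\,\dD s$. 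Integrating against $\mu^\eps_{t,\bx}$, the linear contribution drops out by \eqref{macro:q}, leaving the exact identity
\[
\mathcal{E}(\mu^\eps_{t,\bx}) \,=\, \int_{\R^2} R^\eps(v)\,(v-\mathcal{V}^\eps)^2\,\mu^\eps_{t,\bx}(\bu)\,\dD \bu\,.
\]

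The next step is to control the remainder $R^\eps$. Assumption \eqref{hyp2:N} allows $N$ to grow at most polynomially of order $p$; for the drifts admitted by our hypotheses (polynomial with leading term of order $|v|^p$) one obtains $|N''(u)|\leq C(1+|u|^{p-2})$. Combined with the uniform bound on $\mathcal{V}^\eps$ from Corollary \ref{cor:1}, this yields $|R^\eps(v)|\leq C(1+|v|^{p-2})$. Inserting this bound and splitting the resulting integral into two pieces, I would apply H\"older's inequality with conjugate exponents $p/(p-2)$ and $p/2$ to the $|v|^{p-2}$-term (the case $p=2$ being trivial, since $N''$ is then merely locally bounded), which delivers
\[
|\mathcal{E}(\mu^\eps_{t,\bx})| \,\leq\, C\,\Bigl( D_2[\mu^\eps](t,\bx) \,+\, M_p[\mu^\eps](t,\bx)^{(p-2)/p}\,D_p[\mu^\eps](t,\bx)^{2/p}\Bigr)\,.
\]

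To conclude, it suffices to plug in the a priori estimates already available. Proposition \ref{prop:1} applied with $q=p$ gives a uniform bound on $M_p[\mu^\eps]$. Proposition \ref{prop:2} applied with $q=2$ and $q=p$, together with the moment assumptions \eqref{hyp1:f0}--\eqref{hyp2:f0} on the initial data and the lower bound $\rho_0^\eps\geq m_*$ from \eqref{hyp:rho0}, yield
\[
D_2[\mu^\eps](t,\bx) \,+\, D_p[\mu^\eps](t,\bx)^{2/p} \,\leq\, C\,\bigl(e^{-2\rho_0^\eps(\bx)t/\eps} \,+\, \eps\bigr)\,,
\]
since raising $(\eps/\rho_0^\eps)^{p/2}$ to the power $2/p$ returns $\eps/\rho_0^\eps\leq\eps/m_*$, and similarly raising $e^{-p\rho_0^\eps t/\eps}$ to the power $2/p$ returns precisely the announced exponent. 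Combining these bounds produces the claimed estimate.

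The only delicate point is bookkeeping: the polynomial growth rate $p-2$ of $N''$ and the moment order $p$ controlled by Proposition \ref{prop:1} must be compatible with the H\"older exponents used. This is arranged by design, since the moment assumptions on $\mu^\eps_0$ in \eqref{hyp1:f0}--\eqref{hyp2:f0} are tailored to exactly the exponent $p$ appearing in \eqref{hyp2:N}. No genuine obstacle arises; the proof is essentially a mean-zero Taylor expansion combined with the two a priori estimates of the preceding section.
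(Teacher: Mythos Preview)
Your overall strategy---Taylor expansion around $\mathcal{V}^\eps$ so that the first-order term vanishes by \eqref{macro:q}, then control the quadratic remainder via Propositions~\ref{prop:1} and~\ref{prop:2}---is the same as the paper's. The gap lies in your bound on the remainder $R^\eps$.

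You claim $|N''(u)|\leq C(1+|u|^{p-2})$, justifying it with the parenthetical ``polynomial with leading term of order $|v|^p$''. But the hypotheses \eqref{hyp1:N}--\eqref{hyp2:N} together with $N\in\scC^2(\R)$ do \emph{not} restrict $N$ to polynomials, nor do they imply any growth control on $N''$. For instance, take $p=3$ and perturb the cubic drift by $v\mapsto \sin(v^4)/v^4$ (suitably smoothed near the origin): this perturbation is bounded, so \eqref{hyp1:N}--\eqref{hyp2:N} still hold, yet its second derivative behaves like $-16\,v^2\sin(v^4)$ at infinity, which violates $|N''(u)|\leq C(1+|u|)$. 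Your H\"older argument then breaks down because the exponent you feed into $M_q$ is no longer compatible with the available moment bounds.

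The paper circumvents this by splitting the integration domain into $\{|v-\mathcal{V}^\eps|\leq 1\}$ and its complement. On the near set, since $\mathcal{V}^\eps$ is uniformly bounded (Corollary~\ref{cor:1}), $v$ stays in a fixed compact set and the Taylor remainder is controlled by a \emph{local} bound on $N''$, yielding $C\,D_2[\mu^\eps]$. On the far set, one uses assumption \eqref{hyp2:N} on $N$ itself (not $N''$) to get $|N(v)-N(\mathcal{V}^\eps)|\leq C|v-\mathcal{V}^\eps|^p$, producing $C\,D_p[\mu^\eps]$. The linear piece over the near set is handled by the same mean-zero trick you use, converting it into an integral over the far set, where $|v-\mathcal{V}^\eps|\leq |v-\mathcal{V}^\eps|^2$. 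Your argument becomes correct if you insert this near/far decomposition in place of the global $N''$ bound.
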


\begin{proof}
We rewrite the error term $\mathcal{E}(\mu^\eps_{t,\bx})$ given by \eqref{error} as follows
\[
\mathcal{E}( \mu^\eps_{t,\bx})
\,=\,
\mathcal{E}_1
\,+\,
\mathcal{E}_2
\,+\,
\mathcal{E}_3\,,
\]
where
\[
\left\{
\begin{array}{l}
\ds \mathcal{E}_1 \,=\, 
N'\left(
\mathcal{V}^\eps
\right) \int_{\R^2} 
\left(
v
-
\mathcal{V}^\eps
\right)
\mathds{1}_{\,|v
-
\mathcal{V}^\eps|\, \leq\, 1}
\,\mu^\eps_{t,\bx}(\bu)\,
\dD \bu\,,
\\[1.1em]
\ds \mathcal{E}_2 \,=\, \int_{\R^2} 
\left(
N\left(
v
\right)
-
N\left(
\mathcal{V}^\eps
\right)
-
N'\left(
\mathcal{V}^\eps
\right)
\left(
v
-
\mathcal{V}^\eps
\right)
\right)
\mathds{1}_{\,|v
-
\mathcal{V}^\eps| \,\leq\, 1}
\,\mu^\eps_{t,\bx}(\bu)\,
\dD \bu\,,
\\[1.1em]
\ds \mathcal{E}_3 \,=\, \int_{\R^2} 
\left(
N\left(
v
\right)
-
N\left(
\mathcal{V}^\eps
\right)
\right)
\mathds{1}_{\,|v
-
\mathcal{V}^\eps|\, > \,1}
\,\mu^\eps_{t,\bx}(\bu)\,
\dD \bu\,.
\end{array}\right.
\]
We start with $\mathcal{E}_1$. According to the definition of $\mathcal{V}^\eps$, we have
\[
\mathcal{E}_1
=
0\,
-\,
N'\left(
\mathcal{V}^\eps
\right)
\int_{\R^2} 
\left(
v
-
\mathcal{V}^\eps
\right)
\mathds{1}_{\,|v
-
\mathcal{V}^\eps|\, > \, 1}
\,\mu^\eps_{t,\bx}(\bu)\,
\dD \bu\,.
\]
Since $N \in \scC^1(\R)$ and applying Corollary \ref{cor:1} \& Proposition \ref{prop:2}, it yields
\[
\left|
\mathcal{E}_1
\right|
\,\leq\,
C\,
\left(
e^{-2\rho_0^\eps t/\eps}
\,+\,
\eps
\right)\,.
\]
We turn to $\mathcal{E}_2$. Since $N$ is of class $\scC^2$, and using Corollary \ref{cor:1}, we have
\[
\left|
N\left(
v
\right)
-
N\left(
\mathcal{V}^\eps
\right)
-
N'\left(
\mathcal{V}^\eps
\right)
\left(
v
-
\mathcal{V}^\eps
\right)
\right|
\mathds{1}_{\,|v
-
\mathcal{V}^\eps| \,\leq\, 1}
\,\leq\,
C\,|v
-
\mathcal{V}^\eps|^2\,,
\]
for some constant $C>0$ independent of $\ds(t,\bx,v) \, \in \, \R^+ \times K \times \R$ and $\eps >0$. Hence, we obtain the following estimate applying Proposition \ref{prop:2}
\[
\left|
\mathcal{E}_2
\right|
\,\leq\,
C\,
\left(
e^{-2\rho_0^\eps t/\eps}
\,+\,
\eps
\right)\,.
\]
To end with, we estimate $\mathcal{E}_3$ using the assumption \eqref{hyp2:N} on $N$ and Corollary \ref{cor:1}. Indeed, we have
\[
\left|
N\left(
v
\right)
-
N\left(
\mathcal{V}^\eps
\right)
\right|
\mathds{1}_{\,|v
-
\mathcal{V}^\eps|\, > \,1}
\,\leq\,
C\,|v
-
\mathcal{V}^\eps|^p\,.
\]
Hence, applying Proposition \ref{prop:2}, we obtain
\[
\left|
\mathcal{E}_3
\right|
\leq
C \,\left( \,e^{-p\rho_0^\eps t/\eps}
\,+\,
\eps^{p/2}\right)
\]
and gathering these computations, it yields  the expected result.
\end{proof}

In the present section, we derived pointwise estimates for the moments of $\mu^\eps$, a relative energy which corresponds to the moments of $\nu^\eps$ and the macroscopic error term $\mathcal{E}(\mu^\eps)$. We build on these results to prove Theorem \ref{main:th}.


\section{Proof of Theorem \ref{main:th}}
\label{sec:4}

To achieve the proof of Theorem \ref{main:th}, we quantify the concentration of  $(\mu^\eps)_{\eps>0}$ around its asymptotic profile $\mathcal{M}_{\rho_0/\eps,
\mathcal{V}}
\otimes
\bar{\mu}$ when $\eps\rightarrow 0$. Therefore, this section consists in estimating the error term  $\ds W_2\left(\mu^\eps,
\mathcal{M}_{\rho_0/\eps,
\mathcal{V}}
\otimes
\bar{\mu}
\right)$,
which we decompose as follows: for any $(t,\bx)\in\R^+\times K$,
\[
W_2^2
\left(
\mu^\eps_{t,\, \bx},\,\mueq\right)\,
\leq\, \cD_1\,+\, \cD_2 \,+\, \cD_3,
\]
where $ \cD_1$, $ \cD_2$ and $ \cD_3$ are defined as
$$
\left\{
\begin{array}{l}
\ds\cD_1 \,:=\, 
W_2^2 \left( \mu^\eps_{t,\,\bx}\,,\,
\mathcal{M}_{\rho_0^\eps/\eps
,\,\mathcal{V}^\eps}
\otimes \bar{\nu}_{t,\,\bx}
\left(
\cdot \,-\, \mathcal{W}^\eps
\right)
\right)\,,
\\[1.1em]
\ds\cD_2
\,:=\, 
W_2^2
\left(
\mathcal{M}_{\rho_0^\eps/\eps,\mathcal{V}^\eps}
\otimes 
\bar{\nu}_{t,\bx}
\left(
\cdot - \mathcal{W}^\eps
\right)\, ,\,
\mathcal{M}_{\rho_0^\eps/\eps,\mathcal{V}}
\otimes 
\bar{\mu}_{t,\bx}
\right)\,,
\\[1.1em]
\cD_3\,:=\, 
W_2^2
\left(
\mathcal{M}_{\rho_0^\eps/\eps,\mathcal{V}}
\otimes 
\bar{\mu}_{t,\bx}\,,\,
\mathcal{M}_{\rho_0/\eps,\mathcal{V}}
\otimes 
\bar{\mu}_{t,\bx}
\right)\,.
\end{array}\right.
$$
The term $\cD_1$ quantifies the distance between the concentration profile  $\nu^\eps$ and the Gaussian distribution since from the change of variable \eqref{change:var}, we have
$$
\cD_1 \,=\, \inf_{\pi^\eps\in\Pi^\eps}\int_{\R^4} 
\left(\eps \left|v-v' \right|^2 
+
\left|w-w' \right|^2 \right)
 \dD \pi^\eps (\bu,\bu')\,,
 $$
where $\Pi^\eps$ stands for the set of distributions over $\R^4$ with marginals $\nu^\eps_{t,\bx}$ and $\cM_{\rho^\eps_0}\otimes \bar{\nu}_{t,\bx}$,   whereas $\cD_2$  quantifies the error between the macroscopic quantities. Indeed, with the same change of variable, we obtain
\[
\cD_2
=
\left|\mathcal{V}^\eps
-
\mathcal{V}
\right|^2
+
\left|
\mathcal{W}^\eps
-
\mathcal{W}
\right|^2.
\]
Finally, the term $\cD_3$ is simply the error on the macroscopic density. In the following we give an estimate for each term $(\cD_i)_{1\leq i\leq 3}$.

\subsection*{Estimate for $\cD_1$}

Our strategy to estimate $\ds\cD_1$ consists in introducing a coupled equation on $\pi^\eps \in \Pi^\eps$. Let us consider the following problem, 
\begin{equation}
\label{combined problem}
    \begin{array}{ll}
 &   \displaystyle\partial_t \pi^\eps 
    \,+\,
    \text{div}_{\bu}
    \left[
    \mathbf{b}^\eps_0(t,\bx,\bu)\,
    \pi^\eps
    \right]
    \,+\,
\partial_{w'}
\left[A_0\left(0,w' \right)\pi^\eps \right]
\\[1.1em]
&    \,=\,
    \ds
    \frac{1}{\eps}\,\text{div}_{v,v'}
    \left[
    \rho_0^\eps(\bx)
    \left(
    v,\,v'
    \right)^\top
\pi^\eps
    \,+\,
    D \cdot
    \nabla_{v,v'} \pi^\eps
    \right],
    \end{array}
\end{equation}
where $\mathbf{b}^\eps_0$ is given in \eqref{nu:eq} and the diffusion matrix $D$ is given by
\[
D
\,=\,
\begin{pmatrix}
1 & \beta\\
\beta & 1 \\
\end{pmatrix}\,,
\]
for some $\beta \in [-1,1]$. On the one hand, integrating the former equation with respect to $\bu'=(v',w')$, we obtain equation \eqref{nu:eq} on $\nu^\eps$. On the other hand, equation \eqref{combined problem} integrated with respect to $\bu$ is given by
\begin{equation*}
    \displaystyle\partial_t \,\nu
    \,+\,\partial_{w'}
\left[A_0\left(0,w' \right)\nu \right]
    \,=\,
    \frac{1}{\eps}\partial_{v'}
    \left(
    \rho_0^\eps v'\, \nu
    +
    \partial_{v'}\, \nu
    \right)\,,
\end{equation*}
which is solved by $\mathcal{M}_{\rho_0^\eps}\otimes \bar{\nu}$.
Consequently, if we take some initial data $\pi^\eps_0$ such that
\[
\pi^\eps_0
\in \Pi
\left(
\nu^\eps_{0,\bx}\,,\, 
\mathcal{M}_{\rho_0^\eps}\otimes \bar{\nu}_{0,\bx}
\right)\,,
\]
we obtain that the solution  $\pi^\eps$ to \eqref{combined problem} has marginals $\nu^\eps_{t,\bx}$ with respect to $\bu$ and $\mathcal{M}_{\rho_0^\eps} \otimes \bar{\nu}_{t,\bx}$ with respect to $\bu'$ at all time $t \geq 0$. Hence, according to the definition of the Wasserstein metric, the following inequality holds
\[
\cD_1
\,\leq\,
\int_{\R^4} 
\left(\eps \left|v-v' \right|^2 
\,+\,
\left|w-w' \right|^2 \right)
 \dD \pi^\eps (\bu,\bu')\,.
\]
Moreover, we say that the equation is coupled because of the diffusion matrix $D$.
Condition $\beta~\in~[-1,1]$ ensures that the matrix is positive and hence is required in order for equation \eqref{combined problem} to be well-posed. In the case $\beta=0$, there is no coupling between variables $\bu$ and $\bu'$. However, we can see that there is only one suitable choice for the parameter $\beta$. Indeed, considering only the leading order in equation \eqref{combined problem}, the equation rewrites
\begin{equation*}
    \begin{array}{ll}
    \ds
    \displaystyle\partial_t \pi^\eps 
    \,=\,
    \ds
    \frac{1}{\eps}\,\text{div}_{v,v'}
    \left[
    \rho_0^\eps 
    \left(
    v,\,v'
    \right)^\top
\pi^\eps
    \,+\,
    D \cdot
    \nabla_{v,v'} \pi^\eps
    \right]\,.
    \end{array}
\end{equation*}
Multiplying the former equation by $|v-v'|^2/\,2$ and integrating by part, we obtain
\[
\frac{1}{2}\,\frac{\dD}{\dD t}\int_{\R^4} 
\left|v-v' \right|^2 
 \dD \pi^\eps (\bu,\bu')
 \,+\,
 \frac{\rho_0^\eps}{\eps}
 \int_{\R^4} 
\left|v-v' \right|^2 
 \dD \pi^\eps (\bu,\bu')
\,=\,
2\,\frac{1-\beta}{\eps}\,.
\]
Consequently, $\beta=1$ is the unique suitable choice in order to avoid blow up in the asymptotic
$\eps \, \rightarrow\, 0$. Hence, \textbf{we fix parameter} $\beta$ to $1$ in what follows.
On a probabilistic point of view, this choice corresponds to taking the same Brownian motion for the processes associated to $\nu^\eps$ and $\mathcal{M}_{\rho_0^\eps} \otimes \bar{\nu}$. \\
Before estimating $\cD_1$, we precise the nature of solutions we consider for equation \eqref{combined problem}
\begin{definition}\label{notion de solution combined problem}
For any $\bx$ in $K$ and some $\varepsilon>0$, we say that $\pi^\varepsilon$ solves \eqref{combined problem} with initial condition $\pi^\varepsilon_0$ if we have
\begin{enumerate}
\item  $\pi^\eps$ lies in
\[ 
\scC^0 \left(
\R^+\,,\,
\cD'
\left(\R^4\right)
\right)
\cap 
L^\infty_{loc}
\left(
\R^+\,,\,
L\,\log{
L
}
\left(\R^4\right)
\right)\,
,\]
where $\cD'(\R^4)$ stands for the set of distributions over $\R^4$ and $L\,\log{
L
}$ stands for the set of $L^1$ functions with finite entropy over $\R^4$.
\item for all $t \geq 0$,
and $\ds\varphi \in \scC_c^\infty
\left(\R^4\right)$, it holds
\begin{align*}
\int_{\R^4}
\varphi\,
\left(
\pi^\varepsilon_{t}
-
\pi^\varepsilon_{0}
\right)\,\dD\bu\,
\dD\bu'\,
&=\,
\int_0^t
\int_{\R^4}
\left(
\nabla_{\bu} \varphi
\cdot 
\mathbf{b}^\eps_0(s,\bx,\bu)
        \,+\,
\partial_{w'}
 \varphi\,A_0(0,\, w') 
\right)
\pi^\varepsilon_{s}\,\dD\bu\,\dD\bu'\,\dD s\\[0,8em]
&-\frac{1}{\eps}
\int_0^t
\int_{\R^4}
\left(
\rho_0^\eps(\bx)
\nabla_{v,v'}
\varphi \cdot
\left(
v,v'\right)^\top
\,-\,
\mathrm{div}_{(v,v')}
\left(
D\cdot
\nabla_{v,v'}
\varphi
\right)
        \right)
\pi^\varepsilon_{s}\,\dD\bu\,\dD\bu'\,\dD s\,,
\end{align*}
\end{enumerate}
where $\mathcal{V}^\eps$ and $\mathbf{b}^\eps_0$  are given by \eqref{macro:q} and \eqref{kinetic:eq}.
\end{definition}
We prove the following result for equation \eqref{combined problem}
\begin{proposition}\label{WP combined problem}
Under the assumptions of Theorem \ref{WP mean field eq}, consider a positive $\eps$, some $\bx$ lying in $K$ and a coupling 
\[
\pi^\eps_0~
\in~\Pi
\left(
\nu^\eps_{0,\bx}\,,\, 
\mathcal{M}_{\rho_0^\eps}\otimes \bar{\nu}_{0,\bx}
\right).\]
There exists a solution $\pi^\eps$ to equation \eqref{combined problem} with initial condition $\pi^\eps_0$ and parameter $\beta=1$ in the sense of Definition \ref{notion de solution combined problem}. Furthermore, we have 
\[
\pi^\eps(t,\cdot)
\in \Pi
\left(
\nu^\eps_{t,\bx}\,,\,
\mathcal{M}_{\rho_0^\eps}\otimes \bar{\nu}_{t,\bx}
\right),\quad
\forall t \in \R^+\,.
\]
\end{proposition}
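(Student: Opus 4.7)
\textit{Plan.} My approach is to build $\pi^\eps$ as a vanishing-viscosity limit of a non-degenerate regularization of \eqref{combined problem} and then identify its marginals via uniqueness of the corresponding Fokker-Planck equations. Since $\mu^\eps$ is provided by Theorem \ref{WP mean field eq}, the macroscopic quantities $\mathcal{V}^\eps$, $\mathcal{W}^\eps$ and the drift $\mathbf{b}^\eps_0$ are fixed continuous data on $\R^+\times K\times\R^2$, and equation \eqref{combined problem} is therefore linear in $\pi^\eps$. The real difficulty is the degeneracy of $D$ when $\beta=1$: its kernel is spanned by $(1,-1)^\top$ in the $(v,v')$ plane, so nothing immediately prevents $\pi^\eps$ from concentrating along that direction.

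For $\delta>0$ I would replace $D$ by $D_\delta := D + \delta\,\mathrm{Id}_2$. The resulting equation is uniformly parabolic in $(v,v')$ with transport-type dependence in $(w,w')$, and a non-negative weak solution $\pi^{\eps,\delta}\in \scC^0(\R^+,L^1(\R^4))\cap L^\infty_{\mathrm{loc}}(\R^+,L\log L(\R^4))$ with initial data $\pi^\eps_0$ can be constructed by a verbatim adaptation of the modified relative entropy method developed in the appendix for Theorem \ref{WP mean field eq}. I would then derive $\delta$-uniform bounds: a second-moment estimate based on the confining property of $A_0$ and on the control of $\mathbf{b}^\eps_0$ inherited from Proposition \ref{prop:1}, together with an entropy estimate obtained by testing the regularized equation against $1+\log\pi^{\eps,\delta}$. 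In this last step the dissipation term
\[
\frac{1}{\eps}\int_{\R^4}\frac{(D_\delta\nabla_{v,v'}\pi^{\eps,\delta})\cdot\nabla_{v,v'}\pi^{\eps,\delta}}{\pi^{\eps,\delta}}\,\dD\bu\,\dD\bu'\,\geq\,\frac{1}{\eps}\int_{\R^4}\frac{(\partial_v\pi^{\eps,\delta}+\partial_{v'}\pi^{\eps,\delta})^2}{\pi^{\eps,\delta}}\,\dD\bu\,\dD\bu'
\]
is non-negative uniformly in $\delta$, while the drift contributions reduce to $\int\pi^{\eps,\delta}\bigl(\mathrm{div}_\bu\mathbf{b}^\eps_0 + \partial_{w'}A_0(0,w')\bigr)$, which the moment estimate controls since $N\in\scC^2$ and $A_0$ is affine. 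Gronwall's lemma then closes a uniform entropy bound on every compact time interval; Dunford-Pettis compactness in $L\log L$, combined with the equation itself to control time derivatives and with Aubin-Lions, then provides a limit point $\pi^\eps$ solving \eqref{combined problem} with $\beta=1$ in the sense of Definition \ref{notion de solution combined problem}.

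The marginal identification is then direct. Testing the limit equation against a function $\varphi(\bu)$ depending only on $\bu$ kills the $\partial_{w'}$ contribution, while the $v'$-drift and the cross term $\partial_v\partial_{v'}\pi^\eps$ integrate to zero in $\bu'$; hence the $\bu$-marginal $\int\pi^\eps\,\dD\bu'$ solves \eqref{nu:eq} with initial data $\nu^\eps_{0,\bx}$, which by uniqueness (equivalent through the change of variable \eqref{change:var} to that of \eqref{kinetic:eq} granted by Theorem \ref{WP mean field eq}) identifies it with $\nu^\eps_{t,\bx}$. The analogous test against $\varphi(\bu')$ shows that $\tilde\nu:=\int\pi^\eps\,\dD\bu$ solves the linear Fokker-Planck equation
\[
\partial_t\tilde\nu\,+\,\partial_{w'}\left[A_0(0,w')\tilde\nu\right]\,=\,\frac{1}{\eps}\partial_{v'}\left[\rho_0^\eps(\bx)\,v'\,\tilde\nu\,+\,\partial_{v'}\tilde\nu\right]
\]
with initial datum $\mathcal{M}_{\rho_0^\eps}\otimes\bar{\nu}_{0,\bx}$; since $\mathcal{M}_{\rho_0^\eps}\otimes\bar{\nu}_{t,\bx}$ is already seen in the formal derivation to solve this equation (using that $\rho_0^\eps v'\mathcal{M}_{\rho_0^\eps}+\partial_{v'}\mathcal{M}_{\rho_0^\eps}=0$ and that $\bar\nu$ satisfies $\partial_t\bar\nu-b\partial_{w'}(w'\bar\nu)=0$), uniqueness of this linear parabolic equation yields $\tilde\nu=\mathcal{M}_{\rho_0^\eps}\otimes\bar{\nu}_{t,\bx}$.

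I expect the main obstacle to lie in the $\delta$-uniform entropy bound in the second step, because nothing in the limiting equation prevents concentration in the $(1,-1)$-direction of the $(v,v')$ plane. The decisive ingredient is the $\delta$-independent lower bound on the dissipation coming from the $(\partial_v+\partial_{v'})^2$ component of $D$, which, once combined with the moment bounds of Proposition \ref{prop:1}, is precisely what keeps the $L\log L$ norm of $\pi^{\eps,\delta}$ under control as $\delta\to 0$.
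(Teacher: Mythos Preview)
Your approach is essentially the one taken in the paper. The paper's proof (Appendix~B) also rests on two ingredients: an $L^1$ contraction argument showing that any solution to \eqref{combined problem} necessarily has the correct marginals, and an a priori entropy estimate $\frac{\dD}{\dD t}H[\pi^\eps_t]+\frac{1}{\eps}I_{v,v'}[\pi^\eps_t]\leq C$ built on exactly the degenerate Fisher information $I_{v,v'}[\pi]=\int|(\partial_v+\partial_{v'})\ln\pi|^2\pi$ that you isolate. The paper's treatment of the $N$-term is slightly different from yours: rather than integrating by parts to produce $\int N'(\cV^\eps+\sqrt{\eps}v)\,\pi$, it keeps the term as $\eps^{-1/2}\int N(\cV^\eps+\sqrt{\eps}v)\,\partial_v(\ln\pi)\,\pi$, observes that since $N$ does not depend on $v'$ this equals $\eps^{-1/2}\int N\,(\partial_v+\partial_{v'})(\ln\pi)\,\pi$, and then Young's inequality absorbs half of $I_{v,v'}$ and leaves $\int|N|^2\pi$, controlled via moments. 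Both routes require moments in $v$ of the first marginal; the paper obtains these by first proving the marginal identification and then invoking Proposition~\ref{prop:2}, whereas you would need $\delta$-uniform moment bounds on the marginals of $\pi^{\eps,\delta}$, which follow from the same computation since the extra $\delta\,\partial_v^2$ is dissipative. Your explicit $D_\delta$-regularization simply fills in the construction that the paper leaves implicit.

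One small correction: for the $\bu$-marginal identification you invoke Theorem~\ref{WP mean field eq}, but that theorem gives uniqueness for the \emph{nonlinear} problem \eqref{kinetic:eq}, whereas what you need is uniqueness for the \emph{linear} equation \eqref{nu:eq} with $\mathbf{b}^\eps_0$ frozen by the already-given $\mu^\eps$. The paper handles this more directly (Lemma~\ref{uniqueness combined pb}): the difference $\pi^1-\nu^\eps$ solves a linear transport-diffusion equation, so multiplying by its sign shows $\|\pi^1-\nu^\eps\|_{L^1}$ is non-increasing. This is shorter and avoids the change of variable back to \eqref{kinetic:eq}.
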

We postpone the proof to Appendix \ref{proof wp combined pb}. It is mainly technical since equation \eqref{combined problem} is linear.\\

We come back to the main concern of this section which consists in estimating $\cD_1$ and prove the following estimate
\begin{proposition}
\label{estimate D 1}
Under the assumptions of Theorem \ref{main:th}, there exist two positive constants $
\ds C
$ and $
\ds\eps_0
$ such that for all $\eps \leq \eps_0$, holds the following estimate
\begin{align*}
\cD_1
\,\leq\,
C\,
\left(\,
W_2^2
\left(\,
\Bar{\nu}^\eps_{0,\bx}\,,\,
\Bar{\nu}_{0,\bx}\,
\right) e^{-2\,b\, t}\,
+\,
e^{-2\,\rho_0^\eps \,t /\eps}
\,+\,
\eps^2\,
\right)\,,
\quad
\forall
(t,\bx)\in   \R^+ \times K\,
.
\end{align*}
\end{proposition}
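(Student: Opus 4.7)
The plan is to specialize $\pi^\eps$ to the solution of the coupled equation~\eqref{combined problem} with $\beta = 1$, which Proposition~\ref{WP combined problem} provides with marginals $\nu^\eps_{t,\bx}$ and $\cM_{\rho_0^\eps(\bx)} \otimes \bar\nu_{t,\bx}$ at all times. I would take the initial coupling $\pi^\eps_0$ so that its $(w,w')$-marginal realizes the optimal transport plan between $\bar\nu^\eps_{0,\bx}$ and $\bar\nu_{0,\bx}$, which contributes $W_2^2(\bar\nu^\eps_{0,\bx}, \bar\nu_{0,\bx})$ initially to $I_w(t):=\int_{\R^4} |w-w'|^2\,\dD\pi^\eps$, and couple the $(v,v')$-variables arbitrarily; by Assumption~\eqref{hyp1:f0} together with the identity $\int v'^2\,\dD\cM_{\rho_0^\eps}(v')=1/\rho_0^\eps\le 1/m_*$ from~\eqref{hyp:rho0}, this yields $\eps I_v(0)\le C$ with $I_v(t):=\int_{\R^4}|v-v'|^2\,\dD\pi^\eps$. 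By definition of $\pi^\eps$ and of the Wasserstein distance, $\cD_1\le E(t):=\eps I_v(t)+I_w(t)$.

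The core of the proof consists in deriving two differential inequalities for $I_v$ and $I_w$. Testing equation~\eqref{combined problem} against $|v-v'|^2$ and $|w-w'|^2$ respectively, and exploiting that the choice $\beta=1$ makes the diffusion matrix $D$ degenerate along $v=v'$ (so that $(\partial_v+\partial_{v'})^2|v-v'|^2=0$, as observed in the derivation of~\eqref{combined problem}), the stiff diffusion contributes only the favourable dissipation $-2\rho_0^\eps I_v/\eps$. One arrives at
\[
\frac{\dD I_v}{\dD t}\,=\,-\frac{2\rho_0^\eps}{\eps}\,I_v\,+\,2\int_{\R^4}(v-v')\,\mathbf{b}^\eps_{0,v}\,\dD\pi^\eps\,,\qquad\frac{\dD I_w}{\dD t}\,=\,-2b\,I_w\,+\,2a\sqrt{\eps}\int_{\R^4} v\,(w-w')\,\dD\pi^\eps\,,
\]
where the second identity uses $\mathbf{b}^\eps_{0,w}-A_0(0,w')=a\sqrt\eps\,v-b(w-w')$. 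The cross term in the $I_w$-equation is controlled by Cauchy-Schwarz as $2|a|\sqrt{D_2[\mu^\eps]}\sqrt{I_w}$, so that $Y:=\sqrt{I_w}$ satisfies $Y'+bY\le |a|\sqrt{D_2[\mu^\eps]}$; Gronwall combined with the sharp estimate of Proposition~\ref{prop:2} then yields the $W_2^2(\bar\nu^\eps_{0,\bx},\bar\nu_{0,\bx})e^{-2bt}$ part of the bound.

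The last step is to bound the forcing in the $I_v$-equation. Writing $v_\mu:=\mathcal{V}^\eps+\sqrt\eps v$ and decomposing
\[
2\sqrt\eps(v-v')\bigl[N(v_\mu)-N(\mathcal{V}^\eps)\bigr]\,=\,2(v_\mu-\mathcal{V}^\eps)\bigl[N(v_\mu)-N(\mathcal{V}^\eps)\bigr]\,-\,2\sqrt\eps\, v'\bigl[N(v_\mu)-N(\mathcal{V}^\eps)\bigr]\,,
\]
the first piece is controlled by the confining inequality $(v_\mu-\mathcal{V}^\eps)[N(v_\mu)-N(\mathcal{V}^\eps)]\le C|v_\mu-\mathcal{V}^\eps|^2$ already used in Proposition~\ref{prop:2}, giving an $O(D_2[\mu^\eps])$ contribution, and the second piece via the polynomial growth~\eqref{hyp2:N} combined with Cauchy-Schwarz and the higher-order moment estimates from Propositions~\ref{prop:1}-\ref{prop:2}. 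The linear $-w/\sqrt\eps$ term is split as $w=(w-w')+w'$ and estimated by Young's inequality with weight $\eta=\rho_0^\eps/\sqrt\eps$, which absorbs half of the decay $-2\rho_0^\eps I_v/\eps$; the remainder involves $\int w'^2\,\dD\bar\nu_{t,\bx}$, which decays like $e^{-2bt}$ thanks to the transport equation solved by $\bar\nu$. The nonlocal $\Psi*_r\rho_0^\eps$ contribution is handled by Young's inequality together with~\eqref{hyp2:psi} and Corollary~\ref{cor:1}, and the macroscopic error term $\mathcal{E}(\mu^\eps)/\sqrt\eps$ is controlled via Proposition~\ref{estimate for the error 2}. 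Gronwall then yields $\eps I_v(t)\le C(e^{-2\rho_0^\eps t/\eps}+\eps^2)$, which, combined with the $I_w$-bound above, gives the claimed estimate. The main obstacle lies in the second nonlinear piece $\sqrt\eps\,v'[N(v_\mu)-N(\mathcal{V}^\eps)]$: closing it requires a careful interplay between the Gaussian second moment of $\cM_{\rho_0^\eps}$, the polynomial control~\eqref{hyp2:N}, and the moments of $\mu^\eps$ up to order $2p$, whose boundedness is granted by Assumptions~\eqref{hyp1:f0}-\eqref{hyp2:f0}.
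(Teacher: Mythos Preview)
Your treatment of $I_w$ has a genuine gap that prevents you from reaching the $\eps^2$ in the statement. Applying Cauchy--Schwarz directly to the cross term gives
\[
2|a|\sqrt{\eps}\left|\int_{\R^4} v\,(w-w')\,\dD\pi^\eps\right|\,\le\,2|a|\sqrt{D_2[\mu^\eps]}\,\sqrt{I_w}\,,
\]
and Proposition~\ref{prop:2} only yields $\sqrt{D_2[\mu^\eps]}\le C\bigl(e^{-\rho_0^\eps t/\eps}+\sqrt{\eps}\,\bigr)$. Plugging this into your Gronwall inequality for $Y=\sqrt{I_w}$ gives $Y(t)\le Y(0)e^{-bt}+C\sqrt{\eps}$, hence $I_w(t)\le C\bigl(I_w(0)e^{-2bt}+\eps\bigr)$. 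This is only $O(\eps)$, not $O(\eps^2)$, so the final bound on $\cD_1\le\eps I_v+I_w$ is off by a full power of $\sqrt{\eps}$ and you recover no more than the zeroth-order rate of Corollary~\ref{order 0}.

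The missing idea is that the cross term must be split as $v=(v-v')+v'$. The piece $\int(v-v')(w-w')\,\dD\pi^\eps$ is harmless: Cauchy--Schwarz together with your estimate $\eps I_v\le C(e^{-2\rho_0^\eps t/\eps}+\eps^2)$ gives a forcing of size $C(e^{-m_*t/\eps}+\eps)\sqrt{I_w}$, which integrates to $O(\eps^2)$. The delicate piece is $\cB_2(t)=\int v'(w-w')\,\dD\pi^\eps$: a direct Cauchy--Schwarz bound $|\cB_2|\le C\sqrt{I_w}$ is again too crude. The paper instead differentiates $\cB_2$ itself and observes that it satisfies
\[
\frac{\dD\cB_2}{\dD t}\,=\,-\Bigl(\frac{\rho_0^\eps}{\eps}+b\Bigr)\cB_2\,+\,a\sqrt{\eps}\int_{\R^4} v\,v'\,\dD\pi^\eps\,,
\]
so that the stiff damping $\rho_0^\eps/\eps$ acts on $\cB_2$ directly and forces $|\cB_2(t)|=O(\eps^{3/2})$ after the initial layer. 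This extra ODE (a hypocoercivity-type cross-moment argument) is precisely what upgrades $I_w$ from $O(\eps)$ to $O(\eps^2)$; without it the proof cannot close at the claimed order.

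As a secondary remark, your handling of $I_v$ is considerably more intricate than needed: the paper simply bounds the whole forcing $\cA_1$ by $C\eps^{-1/2}\cA^{1/2}$ via one Cauchy--Schwarz, without decomposing $N(v_\mu)-N(\cV^\eps)$ or invoking $\int w'^2\,\dD\bar\nu_{t,\bx}$ (whose finiteness is not among the stated assumptions on $\bar\mu_0$). The elaborate decomposition buys nothing here since the target $\eps I_v=O(\eps^2)$ already follows from the crude estimate.
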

\begin{proof} We consider 
$
\pi^\eps_0
\in \Pi
\left(
\nu^\eps_{0,\bx}\,,\,
\mathcal{M}_{\rho_0^\eps}\otimes \bar{\nu}_{0,\bx}
\right)
$
and the associated solution $\pi^\eps$ to equation \eqref{combined problem} given by Proposition \ref{WP combined problem}. 
When the context is clear, we omit the dependence with respect to $t$.
To evaluate the term $\cD_1$, we introduce the following quantities
$$
\left\{
\begin{array}{l}
\displaystyle
\ds\cA(t) \,:=\, \int_{\R^4} \left|v-v' \right|^2\,\dD \pi^\eps (\bu,\bu')\,,
\\[1.1em]
\ds\cB(t) \,:=\, \int_{\R^4}\left|w-w' \right|^2 \,\dD \pi^\eps (\bu,\bu')\,.
\end{array}\right.
$$
Observing that $\cD_1(t) \leq \eps\,\cA(t) \,+\, \cB(t)$, this proof consists in showing that $\cA(t)$ is of order $\eps$ whereas $\cB(t)$ is of order $\eps^2$.
\\ 
We begin with $\cA$ and multiply equation \eqref{combined problem} by 
$\ds
|v-v'|^2/2
$
and integrate by part with respect to $(\bu,\bu')$, it yields
\begin{align*}
    \frac{1}{2}\frac{\dD}{\dD t}\displaystyle 
    \cA
    \,+\,
    \frac{\rho_0^\eps}{\eps}\cA
    \displaystyle=
    \cA_{1}\,,
\end{align*}
where
\begin{equation*}
     \displaystyle  \cA_{1} \,=\,
     \frac{1}{\sqrt{\eps}}
   \,\int_{\R^4}
    (v-v')\,
    \left(
    N(\mathcal{V}^\eps
    +
    \sqrt{\eps}\, v)
    \,-\,
    w
    \,-\,
    \sqrt{\eps}\,v\, \Psi*_r\rho_0^\eps(\bx)
    \right)\,
    \dD\pi^\eps (\bu,\bu')\,.
    \end{equation*}
We apply Cauchy-Schwarz inequality, assumptions \eqref{hyp2:N}, \eqref{hyp2:psi} \& \eqref{hyp:rho0} and Corollary \ref{cor:1} to estimate $\cA_1$, it yields
\[
\cA_1
\,\leq\,
\frac{C}{\sqrt{\eps}}
\,\cA(t)^{1/2}
\,\left(
\int_{\R^2}
    \left(
    1\,
    +\,
    |w|^2
    \,+\,
    |\sqrt{\eps}\,v|^{2p}
    \right)\,
    \dD\nu^\eps_{t,\bx} (\bu)
\right)^{1/2}\,.
\]
Then we perform the change of variable \eqref{change:var} in the latter integral and apply Propositions \ref{prop:1} \& \ref{prop:2} and obtain
\begin{align*}
    \ds\frac{1}{2}\frac{\dD}{\dD t}
    \cA(t)
    \,+\,
    \frac{\rho_0^\eps}{\eps}\,\cA(t)
    \displaystyle
    \,\leq\,
    \frac{C}{\sqrt{\eps}}\,\cA(t)^{1/2}\,.
\end{align*}
Dividing the former estimate by $\ds \cA(t)^{1/2}$ and applying Gronwall's lemma, this yields
\[
\cA(t)^{1/2}
\,\leq\,
\cA(0)^{1/2}\,
\exp\left(
-\rho_0^\eps\, t/\eps
\right)
\,+\,
C\,\eps^{1/2}\,,
\]
where the constant $C>0$ may depend on the lower bound $m_*$ of the spatial distribution $\rho_0^\eps$ (see assumption \eqref{hyp:rho0}), $m_p$ and $\bar{m}_p$. We point out that since we do not prepare the initial condition, $\cA(0)$ may blow up as $\eps$ vanishes. Indeed, we have
\[
\cA(0)
\,\leq\,
2\,
\left(
\int_{\R^2}
    |v|^{2}
    \,
    \dD\nu^\eps_{0,\bx} (\bu)
\,+\,
\frac{1}{\rho^\eps_0(\bx)}
\right)\,.
\]
Hence, operating the change of variable \eqref{change:var} in the latter integral and applying assumptions \eqref{hyp:rho0} \& \eqref{hyp1:f0}, the former estimate becomes
\[
\cA(0)
\,\leq\,
\frac{C}{\eps}\,.
\]
Therefore, for all $\eps$ less than $1$, we obtain
\[
\cA(t)
\leq
\frac{C}{\eps}
\left(\exp{\left(
-2\,\rho_0^\eps(\bx) \,t/\eps
\right)}
\,+\,\eps^2
\right)\,.
\]
We turn to $\cB$ and prove that it is of order $\eps^2$ using the previous estimate on $\cA$. Indeed, we compute the derivative of $\cB$ multiplying equation \eqref{combined problem} by $|w-w'|^2/2$ and integrating by part with respect to $(\bu,\bu')$, it yields
\begin{align*}
\frac{1}{2}\,\frac{\dD}{\dD t} 
\cB(t)
&\,=\,
-\,b\, \cB(t)
\,+\,
a\,\sqrt{\eps}
\,\left(
\cB_1(t)
\,+\,
\cB_2(t)
\right)\,,
\end{align*}
where
\begin{equation*}
\left\{
\begin{array}{l}
     \displaystyle  \cB_{1}(t) \,=\,
     \int_{\R^4}
(v-v')\,(w-w')\,\dD \pi^\eps(\bu,\bu')\,,
    \\[1.1em]
    \displaystyle \cB_2(t)\,=\,
    \int_{\R^4}
v'\,
(w-w')\,  
\dD \pi^\eps(\bu,\bu')\,.
    \end{array}
    \right.
    \end{equation*}
In order to estimate $\cB_1$, we apply Cauchy-Schwarz inequality, use the estimate on $\cA(t)$ and assumption \eqref{hyp:rho0} on $\rho_0^\eps$, which yields
\[
a\,\sqrt{\eps}\,
\cB_1
\,\leq\,
C\,
\left(
\exp{
\left(
-m_*\, t/\eps
\right)}\,
+\,
\eps
\right)
\,\cB(t)^{1/2}\,,
\]
for some $C>0$ and where  $m_*$ is given in \eqref{hyp:rho0}. 
The next step consists in proving that $\cB_2(t)$ is of order $\eps^{3/2}$. We compute the derivative of $\cB_2(t)$ multiplying equation \eqref{combined problem} by $\ds
v' (w-w')\,  $ and integrating by part with respect to $(\bu,\bu')$, it yields
\begin{equation*}
    \frac{\dD}{\dD t}
    \cB_2(t)
\,=\,
-\left(\frac{\rho_0^\eps}{\eps}
\,+\,
b \right)
\cB_2(t)
\,+\,
a\,\sqrt{\eps}\,
\int_{\R^4}
v\,v'\,
\dD \pi^\eps
(\bu,\bu').
\end{equation*}
Then we apply Young's inequality, invert the change of variable \eqref{change:var} and apply Proposition \ref{prop:2}. It yields
\begin{equation*}
    a\,\sqrt{\eps}\,
\int_{\R^4}
\left|
v\,v'\right|\,
\dD \pi^\eps
\left(
\bu,\, \bu'
\right)
\,\leq\,
C\,\left(
\eps^{-1/2}\,e^{-2\rho_0^\eps t/\eps}
\,+\,
\eps^{1/2}
\right),
\end{equation*}
where the positive constant $C$ may depend on $m_*$, $m_p$ and $\bar{m}_p$.
Then, we multiply the equation on $\ds\cB_2$ by its sign and apply Gronwall's lemma. In the end, this leads to
\[
\left|\cB_2(t)\right|
\leq
\left|\cB_2(0)\right|\exp\left(
-\rho_0^\eps t/\eps
\right)
+
C
\left(
\eps^{1/2}
\exp\left(
-\rho_0^\eps t/\eps
\right)
+
\eps^{3/2}
\right).
\]
for any $\eps$ less than $m_*/(2b)$.
Furthermore, applying Cauchy-Schwarz inequality in $\cB_2(0)$ and using assumption \eqref{hyp:rho0}, we obtain
\[
\left|
\cB_2(0)
\right|\,
\leq\,
C\,\cB(0)^{1/2}.
\]
Gathering the former computations and applying assumption \eqref{hyp:rho0}, we obtain 
\begin{equation*}
\frac{1}{2}\frac{\dD}{\dD t}\, \cB(t) 
+
b\, \cB(t)
\,\leq\,
C\,
\left(
e^{
-m_* t/\eps}\,
+\,
\eps\,
\right)
\cB(t)^{1/2}\,
+\,
C
\left(
\eps^{1/2}\,
\cB(0)^{1/2}\,
e^{
-m_* t/\eps}\,
+\,
\eps\,
e^{
-m_* t/\eps}\,
+\,
\eps^2\,
\right)\,.
\end{equation*}
To estimate $\cB(t)$, we will construct an upper-bound $\cB_+(t)$ by considering the following ODE
\begin{equation*}
\left\{
\begin{array}{ll}
\ds
\frac{1}{2}\,\frac{\dD}{\dD t}\, \cB_+(t) 
\,+\,
b \,\cB_+(t)\,
=
\,
2\,C
\,\left(
e^{
\left(b
\,-\,
m_*/\eps
\right)\, t}\,
+\,
\eps
\right)
\cB_+(t)^{1/2}\,,
\\[1,1em]
\ds
\cB_+(0)^{1/2}\,=\,
\cB(0)^{1/2}\,+\, \frac{2\,C\,\eps}{b}\,,
\end{array}
\right.
\end{equation*}
whose exact solution is given by
\[
\cB_+(t)^{1/2}
\,=\,
\cB_+(0)^{1/2}\,e^{-bt}
\,+\,
\frac{2\,C\,\eps}{b}
\left(
1\,-\,e^{-bt}
\right)
\,+\,
\frac{2\,C\,\eps}{m_*\,-\,2b\eps}
\,\left(
e^{-bt}
\,-\,
e^{-
\left(
m_*/\eps
\,-\, b 
\right)\,t}\,
\right)\,.
\]
We check that the following condition is fulfilled at all time $t$, 
\begin{equation*}
\ds
\cB_+(t)^{1/2}\,\geq\,\eps^{1/2}\,\cB(0)^{1/2}\,e^{-b\,t}\,+\,\eps\,,
\end{equation*}
as long as $\ds 2\,C\,\geq\,b$ and $\ds \eps \,<\, \min
{\left\{\,m_*\,/\,(2b)\,,\,1\,\right\}}$.
Making use of the latter inequality, we get
\[
\eps^{1/2}\,
\cB(0)^{1/2}
\,e^{
-m_* t/\eps}\,
+\,
\eps
\,e^{
-m_* t/\eps}\,
+\,
\eps^2\,
\leq\,
\left(
e^{(b\,-\,m_*/\eps) t}
\,+\,
\eps
\right)
\,\cB_+(t)^{1/2}\,.
\]  
Therefore, defining $u(t) =\cB_+(t)\,-\,\cB(t)$, we check that the following inequality holds
\[
\frac{1}{2}\frac{\dD u}{\dD t} 
\,+\,
b\, 
u\,
\geq\,
C\,
\frac{e^{
-m_* t/\eps}\,
+\,
\eps}{
\cB(t)^{1/2}\,+\,\cB_+(t)^{1/2}
}\,
u\,.
\]
Since $\cB$ is non negative and $\ds\cB_+(t)$ stays lower bounded by $\eps^2$, we apply Gronwall's lemma to the latter estimate and noticing that $u(0)\,\geq\,0$, it yields that $u(t)$ is non-negative. Then, we deduce
\[ \cB(t) 
\,\leq \, 
\cB_+(t)
\,\leq\, 
2\,\cB(0)\,e^{-2bt}
\,+\,
C\,\eps^2\,,
\]
for all $t \geq 0$, 
as long as $\ds 2\,C\,\geq\,b$ and $\ds \eps \,<\,\min
{\left\{\,m_*\,/\,(2b)\,,\,1\,\right\}}$.\\
Gathering the former computations, we obtain the following estimate for $\cD_1$
\[
\cD_1
\,\leq\,
C\,
\left(
\cB(0)\, e^{-2b t}
\,+\,
e^{
-2 \,\rho_0^\eps\, t/\eps
}
\,+\,
\eps^2
\right)\,.
\]
We conclude the proof taking the infimum over all $\pi^\eps_0$.
\end{proof}

\subsection*{Estimate for $\cD_2$}
We mentioned that the change of variables \eqref{change:var} in $\cD_2$ yields
\[
\cD_2
\,=\,
|\mathcal{V}-\mathcal{V}^\eps|^2
\,+\,
|
\mathcal{W}-
\mathcal{W}^\eps
|^2\,.
\]
Hence the proof consists in injecting the error estimate obtained in Proposition \ref{estimate for the error 2} in equations \eqref{macro-eps:eq} \& \eqref{macro:eq}
\begin{proposition}
\label{estimate D 2}
Under the assumptions of Theorem \ref{main:th}, there exist two positive constants $
\ds C
$ and $
\ds\eps_0
$ such that for all $\eps \leq \eps_0$, holds the following estimate
\[
\cD_2^{1/2}
\,\leq\,
C\,
\min
{
\left(
e^{C\,t}
\left(
\mathcal{E}_{\mathrm{mac}}\,
+\,
\eps
\right),
1
\right)},
\quad
\forall\,(t,\bx) \in \R^+ \times K\,,
\]
where 
$\mathcal{E}_{\mathrm{mac}}$ is defined in Theorem \ref{main:th}.
\end{proposition}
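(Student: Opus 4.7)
The plan is to derive evolution equations for the errors $e_{\mathcal{V}} := \mathcal{V} - \mathcal{V}^\eps$ and $e_{\mathcal{W}} := \mathcal{W} - \mathcal{W}^\eps$ by subtracting system \eqref{macro-eps:eq} from system \eqref{macro:eq}, and then to run a Gr\"onwall argument in the $L^\infty(K)$ norm. Recalling that the change of variables \eqref{change:var} gives the identity $\cD_2 = |e_{\mathcal{V}}|^2 + |e_{\mathcal{W}}|^2$, and that $\mathcal{V}, \mathcal{V}^\eps, \mathcal{W}, \mathcal{W}^\eps$ are uniformly bounded on $\R^+\times K$ thanks to Corollary \ref{cor:1} and Theorem \ref{wp macro eq}, we already have the trivial bound $\cD_2^{1/2} \leq C$, which will account for the $\min(\cdot, 1)$ in the statement. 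It thus remains to establish the linear-exponential bound $\cD_2^{1/2} \leq C e^{Ct}(\mathcal{E}_{\mathrm{mac}} + \eps)$.

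The error system takes the form
\[
\left\{
\begin{array}{l}
\partial_t e_{\mathcal{V}} \,=\, N(\mathcal{V}) - N(\mathcal{V}^\eps) \,-\, e_{\mathcal{W}} \,-\, \bigl(\mathcal{L}_{\rho_0}[\mathcal{V}] - \mathcal{L}_{\rho_0^\eps}[\mathcal{V}^\eps]\bigr) \,-\, \mathcal{E}(\mu^\eps), \\[0.6em]
\partial_t e_{\mathcal{W}} \,=\, a\, e_{\mathcal{V}} \,-\, b\, e_{\mathcal{W}},
\end{array}
\right.
\]
with initial data bounded by $\mathcal{E}_{\mathrm{mac}}$. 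Since $\mathcal{V}$ and $\mathcal{V}^\eps$ take values in a fixed compact set on which $N \in \scC^1$ is Lipschitz, I will bound $|N(\mathcal{V}) - N(\mathcal{V}^\eps)| \leq C|e_{\mathcal{V}}|$. For the non-local term, the key algebraic decomposition is
\[
\mathcal{L}_{\rho_0}[\mathcal{V}] - \mathcal{L}_{\rho_0^\eps}[\mathcal{V}^\eps]
\,=\,
e_{\mathcal{V}}\,(\Psi *_r \rho_0) \,-\, \Psi *_r (\rho_0^\eps\, e_{\mathcal{V}}) \,+\, \mathcal{V}^\eps \,\Psi *_r (\rho_0 - \rho_0^\eps) \,-\, \Psi *_r \bigl((\rho_0 - \rho_0^\eps)\,\mathcal{V}\bigr),
\]
which, together with assumption \eqref{hyp2:psi}, Corollary \ref{cor:1} and the uniform bound on $\mathcal{V}$, yields the $L^\infty(K)$ estimate
\[
\bigl\|\mathcal{L}_{\rho_0}[\mathcal{V}] - \mathcal{L}_{\rho_0^\eps}[\mathcal{V}^\eps]\bigr\|_{L^\infty(K)}
\,\leq\, C\,\|e_{\mathcal{V}}\|_{L^\infty(K)} \,+\, C\,\|\rho_0 - \rho_0^\eps\|_{L^\infty(K)}.
\]

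Combining these bounds and invoking Proposition \ref{estimate for the error 2} to control $|\mathcal{E}(\mu^\eps)| \leq C\bigl(e^{-2\rho_0^\eps t/\eps} + \eps\bigr)$, I obtain the differential inequality
\[
\frac{\dD}{\dD t}\bigl(\|e_{\mathcal{V}}(t)\|_{L^\infty(K)} + \|e_{\mathcal{W}}(t)\|_{L^\infty(K)}\bigr)
\,\leq\,
C\,\bigl(\|e_{\mathcal{V}}(t)\|_{L^\infty(K)} + \|e_{\mathcal{W}}(t)\|_{L^\infty(K)}\bigr)
\,+\,
C\,\mathcal{E}_{\mathrm{mac}}
\,+\,
C\,\bigl(e^{-2 m_* t/\eps} + \eps\bigr),
\]
where I used assumption \eqref{hyp:rho0} to replace $\rho_0^\eps$ by its lower bound $m_*$ in the exponential. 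Gr\"onwall's lemma then delivers
\[
\|e_{\mathcal{V}}(t)\|_{L^\infty(K)} + \|e_{\mathcal{W}}(t)\|_{L^\infty(K)}
\,\leq\,
C\, e^{C t}\left(\mathcal{E}_{\mathrm{mac}} \,+\, \int_0^t e^{-2 m_* s/\eps}\,\dD s \,+\, \eps\, t\right),
\]
and the time integral of the stiff transient is bounded by $\eps/(2 m_*)$ uniformly in $t$, producing the announced factor $e^{Ct}(\mathcal{E}_{\mathrm{mac}} + \eps)$ after absorbing the $\eps t$ term into a slightly larger constant $C$ in the exponent. Taking the minimum with the a priori bound $1$ yields the full statement.

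The one genuinely non-routine point is the treatment of the non-local operator $\mathcal{L}_{\rho_0}[\mathcal{V}] - \mathcal{L}_{\rho_0^\eps}[\mathcal{V}^\eps]$: since it couples different spatial locations, a pointwise-in-$\bx$ ODE argument is not available and the Gr\"onwall estimate must be carried out at the level of the $L^\infty(K)$ norm, which is precisely why the decomposition above (separating the dependence in $e_{\mathcal{V}}$ from that in $\rho_0 - \rho_0^\eps$) is the central step. Everything else reduces to combining uniform bounds from Section \ref{sec:3}.
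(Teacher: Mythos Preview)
Your proposal is correct and follows essentially the same approach as the paper: derive the error system for $(\mathcal{V}-\mathcal{V}^\eps,\mathcal{W}-\mathcal{W}^\eps)$, use the uniform bounds from Corollary \ref{cor:1} and Theorem \ref{wp macro eq} both for the trivial bound and to linearize $N$, decompose the nonlocal term to separate the $e_{\mathcal{V}}$ and $\rho_0-\rho_0^\eps$ contributions, invoke Proposition \ref{estimate for the error 2} for $\mathcal{E}(\mu^\eps)$, and close a Gr\"onwall argument in $L^\infty(K)$. Your explicit four-term expansion of $\mathcal{L}_{\rho_0}[\mathcal{V}]-\mathcal{L}_{\rho_0^\eps}[\mathcal{V}^\eps]$ is algebraically equivalent to the paper's two-term splitting $\mathcal{L}_{\rho_0}[\mathcal{V}-\mathcal{V}^\eps]+\mathcal{L}_{\rho_0-\rho_0^\eps}[\mathcal{V}^\eps]$, and both routes correctly identify the need to work at the level of $\|\cdot\|_{L^\infty(K)}$ because of the spatial coupling.
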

\begin{proof}
We omit the dependence with respect to $(t,\bx)$ when the context is clear and write $\ds\|\cdot\|_{\infty}$ instead of $\ds\|\cdot\|_{L^\infty(K)}$ in this proof.
According to equations \eqref{macro-eps:eq} \& \eqref{macro:eq}, we have
\begin{equation*}
\left\{
\begin{array}{ll}
\ds
\frac{\dD}{\dD t}
\left|
\mathcal{V}
-
\mathcal{V}^\eps
\right|&=\ds\,
\sign(\mathcal{V}
-\mathcal{V}^\eps)
\left[
N(\mathcal{V})-N(\mathcal{V}^\eps)
-
\left(
\mathcal{W}-\mathcal{W}^\eps
+
\mathcal{L}_{\rho_0}
\left(
\mathcal{V}\right)
-
\mathcal{L}_{\rho_0^\eps}
\left(
\mathcal{V}^\eps
\right)
\right)
-
\mathcal{E}(\mu^\eps)
\right]\,,\\[0.9em]
\ds
\frac{\dD}{\dD t}
\left|\mathcal{W}
-\mathcal{W}^\eps
\right|
&=\ds
\sign(\mathcal{W}
-\mathcal{W}^\eps)\,
A_0
\left(\,\mathcal{V}\,-\,\mathcal{V}^\eps\,,\,
\mathcal{W}\,-\,\mathcal{W}^\eps\,\right)\,,
\end{array}
\right.
\end{equation*}
where $\sign(v) = v\,/\,|\,v\,|$ for all $v \in \R^*$. According  to Corollary \ref{cor:1} and Theorem \ref{wp macro eq}, $
\ds
\left(\mathcal{V},\mathcal{V}^\eps\right)$ is uniformly bounded by a constant $R>0$ with respect to $\eps$, time \& space. Consequently, we have
\[
(\mathcal{V}\,-\,\mathcal{V}^\eps)
\left(
N(\mathcal{V})\,-\,N(\mathcal{V}^\eps)
\right)\,
\leq\, C\,|\mathcal{V}\,-\,\mathcal{V}^\eps|^2\,,
\]
where $C$ stands for the Lipschitz constant of $N$ over the ball of radius $R$.
We now estimate the contribution of the non-local terms. Using the linearity of $\mathcal{L}$ we split the term as follows
\[
\mathcal{L}_{\rho_0}
\left(
\mathcal{V}
\right)\,
-\,
\mathcal{L}_{\rho_0^\eps}
\left(
\mathcal{V}^\eps
\right)\,
=\,
\mathcal{L}_{\rho_0}
\left(
\mathcal{V}\,
-\,
\mathcal{V}^\eps
\right)\,
+\,
\mathcal{L}_{(\rho_0-\rho_0^\eps)}
\left(
\mathcal{V}^\eps
\right)\,.
\]
According to assumption \eqref{hyp2:psi} (
we do not use the constraint $r>1$ here), and since $\mathcal{V}^\eps$ 
is uniformly bounded (see Corollary \ref{cor:1}), we have
\[
-\sign
\left(
\mathcal{V}\,
-\,
\mathcal{V}^\eps
\right)\,
\mathcal{L}_{(\rho_0-\rho_0^\eps)}
\left(
\mathcal{V}^\eps
\right)\,
\leq\,
C
\,
\left\|
\rho_0\,-\,\rho_0^\eps
\right\|_{\infty}\,.
\]
Furthermore, according to assumptions \eqref{hyp2:psi} \& \eqref{hyp:rho0} (we do not use the constraint $r>1$ here),  we obtain 
\[
-\sign
\left(
\mathcal{V}\,
-\,
\mathcal{V}^\eps
\right)\,
\mathcal{L}_{\rho_0}
\left(
\mathcal{V}
\,
-\,
\mathcal{V}^\eps
\right)\,
\leq\,
C\,
\left|\mathcal{V}-\mathcal{V}^\eps
\right|\,
+\,
\left|
\Psi
\right|
*_r
\left(
\left|\mathcal{V}-\mathcal{V}^\eps
\right|
\rho_0
\right)\,.
\]
We estimate the non-local term using assumptions \eqref{hyp2:psi} \& \eqref{hyp:rho0} (we do not use the constraint $r>1$ here). It yields
\[
\left|
\Psi
\right|
*_r
\left(
\left|\mathcal{V}-\mathcal{V}^\eps
\right|
\rho_0
\right)
\,\leq\,
C\,
\left\|\mathcal{V}-\mathcal{V}^\eps
\right\|_{\infty}\,,
\]
for some constant $C$ only depending on $m_*$ and $\Psi$.
Then we gather the former computations and replace $\mathcal{E}(\mu^\eps)$ by the bound obtained in Proposition \ref{estimate for the error 2}. It yields
\begin{equation*}
\frac{\dD}{\dD t}
\left|\,
\mathcal{U}
\,-\,
\mathcal{U}^\eps\,
\right|
\,
\leq\,
C
\left(
\left\|\,
\mathcal{U}
\,-\,
\mathcal{U}^\eps\,
\right\|_{\infty}
\,+\,
\eps\,
+\,
e^{-2\,m_*\,t/\eps}\,
+\,
\left\|\rho_0\,-\,\rho_0^\eps
\right\|_{\infty}
\right)\,,
\end{equation*}
for some positive constant $C$ which may depend on $m_p$, $\ols{m}_p$ and $m_*$ but not on $\eps$ and $(t,\bx)$. Integrating the latter inequality between $0$ and $t$ and taking the supremum over all $\bx$ in $K$, we end up with the following inequality
\begin{equation*}
\left\|\,
\mathcal{U}(t)
\,-\,
\mathcal{U}^\eps(t)\,
\right\|_{\infty}
\, \leq\,
\left\|\,
\mathcal{U}_0
\,-\,
\mathcal{U}^\eps_0\,
\right\|_{\infty}
\,+\,
C
\int_0^t
\left\| \mathcal{U}(s)
\,-\,
\mathcal{U}^\eps(s)
\right\|_{\infty}
\,+\,
\eps\,
+\,
e^{-m_*\, s/\eps}\,
+\,
\left\|\rho_0\,-\,\rho_0^\eps
\right\|_{\infty}
\,\dD s\,.
\end{equation*}
We conclude the proof applying Gronwall's lemma and using that $\mathcal{U}$ and $\mathcal{U}^\eps$ are uniformly bounded according to Theorem \ref{wp macro eq} and Corollary \ref{cor:1}.
\end{proof}

\subsection*{Estimate for $\cD_3$}
We now turn to the last term $\cD_3$. This section is only technical and we prove that $\cD_3$ is negligible in comparison to $\cD_1$ and $\cD_2$.
{ Indeed, since $\cD_3$ is the distance between two tensors, it is bounded by the sum of the distances appearing in each tensor
\[
\cD_3\,\leq\, 
W_2^2
\left(
\mathcal{M}_{\rho_0^\eps/\eps,\mathcal{V}}
\,,\,
\mathcal{M}_{\rho_0/\eps,\mathcal{V}}
\right)
\,+\,
W_2^2
\left(
\bar{\mu}_{t,\bx}\,,\,
\bar{\mu}_{t,\bx}
\right)
\,=\,
W_2^2
\left(
\mathcal{M}_{\rho_0^\eps/\eps,\mathcal{V}}
\,,\,
\mathcal{M}_{\rho_0/\eps,\mathcal{V}}
\right)
\,.
\]}
Then, changing variables, we obtain
\[
\cD_3\,\leq\, 
\frac{2\,\eps}{\rho_0^\eps}\, W_2^2\left(\mathcal{M}_1,\mathcal{M}_1\right)
\,+\, 2\,\eps\,
\left(\frac{1}{\sqrt{\rho_0^\eps}}
\,-\,\frac{1}{\sqrt{\rho_0}}
\right)^2\,.
\]
Hence, according to assumption \eqref{hyp:rho0}, we obtain
\[
\cD_3
\,
\leq \,
\frac{\eps}{2\,m_*^{3}}
\,\|
\rho_0\,
-
\,
\rho_0^\eps
\|_{L^{\infty}(K)}^2\,.
\]

Therefore applying Propositions \ref{estimate D 1} and \ref{estimate D 2} together with the latter  estimate on $\cD_3$, we have proven  Theorem \ref{main:th}.

\section{Conclusion \& Perspectives}

In this paper, we  have characterized the blow-up profile (Gaussian distribution) of the voltage distribution in the regime of strong \& short-range coupling between neurons and have  computed the limiting distribution for the adaptation variable as well. Our result should be interpreted as the first order expansion  of the network's distribution as $\eps$ vanishes.  Indeed, it allows to improve on former convergence result in the sense that we gain an order $\sqrt{\eps}$ in our convergence rate. On top of that, we benefit from the first order expansion  to derive an asymptotic equivalent of the distribution at order zero (see Corollary \ref{order 0}).\\

Let us also mention a few natural questions that arise from this work. The natural continuation of this article consists in obtaining a strong convergence result towards the concentration profile (see \cite{blaustein:prep}). It appears that the relative entropy approach we developed in Appendix \ref{Appendix} may adapt to the analysis of the asymptotic $\eps \ll 1$. We also point out that it should be possible to derive the next corrective terms in the expansion of the macroscopic quantities thanks to this work. Indeed, based on our result with a slight improvement, it may be possible to prove
\[
\ds
\left(\,
\mathcal{V}^\eps,\,
\mathcal{W}^\eps\,
\right)
\,
\underset{\epsilon
\rightarrow
0
}{=}
\,
\left(\,
\ols{
\mathcal{V}}^\eps,\,
\ols{
\mathcal{W}}^\eps\,
\right)
\,+\,
O
\left(
\eps^{3/2}
\right),
\]
where the limiting macroscopic system $\left(\,
\ols{
\mathcal{V}}^\eps,\,
\ols{
\mathcal{W}}^\eps\,
\right)$ solves the following system
\begin{equation*}
    \left\{
    \begin{array}{l}
        \displaystyle \partial_t\,
        \ols{
\mathcal{V}}^\eps
        \,=\, 
        N\left(
        \ols{
\mathcal{V}}^\eps
        \right)
        \,-\,
        \ols{
\mathcal{W}}^\eps
        \,-\,
       \mathcal{L}_{\rho_0}[\,\ols{
\mathcal{V}}^\eps\,]
        \,+\,
        \frac{\eps}{2}\,
        \rho_0\,
        N''\left(
        \ols{
\mathcal{V}}^\eps
        \right)
        ,\\[0.9em]
        \displaystyle \partial_t\,
        \ols{
\mathcal{W}}^\eps
        \,=\, 
        A
        \left(\ols{
\mathcal{V}}^\eps\,,\,
        \ols{
\mathcal{W}}^\eps
        \right)
        .
    \end{array}
\right.
\end{equation*}
The corrective term adds a dependence with respect to the spatial distribution of neurons and it might add some complexity to the dynamics of the limiting macroscopic system. {Our last comment on this work is that it might be possible to improve it approach in order to obtain uniform in time convergence estimate. This could be done by carrying stability analysis of the limiting macroscopic system \eqref{macro:eq}. Indeed, going back to the proof of Theorem \ref{main:th}, the only estimate which is not uniform with respect time is the one given for $\cD_2$ (see Proposition \ref{estimate D 2}), which corresponds to the error between the macroscopic quantities $\cU$ and $\cU^\eps$. Therefore, it should be possible to obtain some uniform in time convergence results by taking $\cU^\eps$ close to an equilibrium state of the limiting macroscopic equation $\eqref{macro:eq}$.
}


\section*{Acknowledgment}
Francis Filbet gratefully acknowledges the support of  ANITI (Artificial and Natural Intelligence Toulouse Institute). 
This project has received support from ANR ChaMaNe No: ANR-19-CE40-0024.


\appendix
\addcontentsline{toc}{section}{Appendices}

\section{Proof of Theorem \ref{WP mean field eq}}\label{Appendix}
In this section, we prove existence and uniqueness of solution for equation \eqref{kinetic:eq} in the sense of Definition \ref{notion de solution}. The main difficulty is to prove uniqueness and continuity of the solution
$$
\mu^\eps \in \scC^0\left(\R^+\times K\,,\,L^1\left(\R^2\right)\right)\,. 
$$
To this aim, we  first establish {\it a priori} estimates on the solution by propagating  exponential moments in $\bfu\in\R^2$. Then the key point of the proof is to define a modified relative entropy $H_\alpha\left[\,\mu\, |\,\nu\,\right]$, given for any $\alpha$ in $]0,1[$ as
    \begin{equation}
      \label{def:Halpha}
H_{\alpha}\left[\,\mu\, |\,\nu\,\right]\,=\,\int_{\R^2}
\mu
\,\ln{
\left(
\frac{
\mu
}{
\alpha\mu + (1-\alpha)\nu
}
\right)
}
\,\dD\bu\,, 
\end{equation}
for two non-negative functions
$
\mu,\,
\nu
\,\in\,
L^1
\left(
\R^2
\right)
$  with mass equal to one.
Relative entropy has several technical advantages in comparison to the $L^1$ norm. On the one hand, it makes explicit the dissipation due to the Laplace operator, which we refer as the Fisher information in what follows. On the other hand, the latter modified relative entropy, where the denominator is a convex combination of $\mu$ \& $\nu$ is well defined for all positive functions $\mu$ and $\nu$ with mass $1$, with no further condition on the domain on which they vanish. Indeed, under these conditions, holds the following inequality
\[
{0 \,\leq}\, H_{\alpha}\left[\,
\mu\, |\,
\nu\,
\right]
\,\leq\,
-\ln{(\alpha)}\,<\,+\infty.
\]
We even prove the following stronger result.
\begin{lemma}
  \label{entropy V.S. L1}
For any two non-negative functions
$\mu$,  $\nu \in L^1(\R^2)$ with integral equal to one, the following estimate holds
\begin{equation}
\frac{(1\,-\, \alpha)^2}{2}
\left\|
\mu\,-\,
\nu
\right\|_{L^1
\left(
\R^2
\right)}^2
\,\leq\,
H_{\alpha}
\left[
\mu\,|\,
\nu
\right]
\,\leq\,
\frac{1-\alpha}{\alpha}
\left\|
\mu
\,-\,
\nu
\right\|_{L^1(\R^2)}\,.
\end{equation}
\end{lemma}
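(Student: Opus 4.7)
The plan is to handle the two bounds separately, viewing $H_\alpha[\mu\,|\,\nu]$ as a \emph{standard} Kullback--Leibler divergence between $\mu$ and the auxiliary density $g := \alpha\mu + (1-\alpha)\nu$, which is itself a probability density on $\R^2$. This identification is the central observation because it reduces everything to classical inequalities together with the identity $\mu - g = (1-\alpha)(\mu-\nu)$, so that $\|\mu-g\|_{L^1(\R^2)} = (1-\alpha)\|\mu-\nu\|_{L^1(\R^2)}$.

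For the lower bound, I would simply invoke the classical Csisz\'ar--Kullback--Pinsker inequality applied to the pair $(\mu,g)$:
\[
\int_{\R^2}\mu\,\ln\!\left(\frac{\mu}{g}\right)\,\dD\bu \,\geq\, \frac{1}{2}\,\|\mu-g\|_{L^1(\R^2)}^2 \,=\, \frac{(1-\alpha)^2}{2}\,\|\mu-\nu\|_{L^1(\R^2)}^2,
\]
which yields the left-hand side of the claim. (For this one only needs that $\mu$ and $g$ are probability densities, which is immediate since $\int \mu = \int \nu = 1$ implies $\int g = 1$.)

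For the upper bound, the plan is to exploit the elementary inequality $\ln(1+y)\leq y$. Applied pointwise to $\mu/g = 1 + (1-\alpha)(\mu-\nu)/g$, this gives
\[
\mu\,\ln\!\left(\frac{\mu}{g}\right) \,\leq\, (1-\alpha)\,\frac{\mu\,(\mu-\nu)}{g}.
\]
Now I split the integral according to the sign of $\mu-\nu$. On $\{\mu\geq\nu\}$ the integrand is nonnegative, and I use the crucial bound $\mu/g \leq 1/\alpha$ (since $g \geq \alpha\mu$) to get $\mu(\mu-\nu)/g \leq (\mu-\nu)_+/\alpha$. On $\{\mu<\nu\}$ the integrand is nonpositive and can simply be dropped. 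Combining these and using $\int(\mu-\nu)_+\,\dD\bu = \tfrac{1}{2}\|\mu-\nu\|_{L^1(\R^2)}$ (because $\int(\mu-\nu)\,\dD\bu=0$) gives
\[
H_\alpha[\mu\,|\,\nu] \,\leq\, \frac{1-\alpha}{2\alpha}\,\|\mu-\nu\|_{L^1(\R^2)} \,\leq\, \frac{1-\alpha}{\alpha}\,\|\mu-\nu\|_{L^1(\R^2)}.
\]

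I do not expect any genuine obstacle here; the only mild technical points are to justify working with $g$ in the classical Pinsker inequality (it is a bona fide probability density because $\alpha\in(0,1)$ and $\mu,\nu$ are nonnegative with unit mass), and to make sense of the integrand everywhere, which is handled by the convention $0\ln 0 = 0$ and the fact that $g>0$ wherever $\mu>0$ (indeed $g\geq\alpha\mu$). These points are standard and do not require moment or support assumptions on $\mu,\nu$, which is precisely the advantage the modified functional $H_\alpha$ is designed to provide in the well-posedness argument that follows.
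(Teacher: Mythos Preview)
Your proof is correct and follows essentially the same route as the paper: both identify $H_\alpha[\mu\,|\,\nu]$ with the standard relative entropy $H_0[\mu\,|\,g]$ for $g=\alpha\mu+(1-\alpha)\nu$, invoke Csisz\'ar--Kullback--Pinsker for the lower bound, and use $\ln(1+x)\le x$ together with $\mu/g\le 1/\alpha$ for the upper bound. Your sign-splitting in the upper bound is a slight refinement (it actually yields the sharper constant $(1-\alpha)/(2\alpha)$ before you discard the factor $1/2$), whereas the paper simply bounds $\mu(\mu-\kappa^\alpha)/\kappa^\alpha$ by $|\mu-\kappa^\alpha|/\alpha$ directly; the difference is cosmetic.
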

\begin{proof}
The first inequality is a straightforward consequence of the
Csiz\'ar-Kullback inequality: for $\mu$,  $\nu \in L^1(\R^2)$ with integral equal to one
$$
\| \mu - \nu \|_{L^1}^2 \,\leq \, 2 \, H_0[\mu\,|\,\nu].
$$
{The result is obtained applying the latter inequality with $\nu= \kappa^\alpha$, where $\kappa^\alpha$ is given by
\[
\kappa^\alpha
\,=\,
\alpha \mu 
\,+\,
(1\,-\,
\alpha
)\nu\,.
\]}

For the second inequality,{\tiny } we notice that 
$
\ds
\left|\,\mu\,
/
\kappa^\alpha\,
\right|\,\leq\,
\alpha^{-1}
$.
Then we apply the convex inequality $\ln{(1\,+\,x)}\,\leq\,x$ to the following relation
\[
H_\alpha
\left[\,
\mu\,|\,
\nu
\,\right]
\,=\,
\int_{\R^2}
\,
\mu
\ln{
\left(
1\,+\,
\frac{\mu
\,-\,
\kappa^\alpha
}{
\kappa^\alpha
}
\right)
}
\,
\dD \bu
\,.
\]
\end{proof}
From this modified relative entropy functional, we  prove the continuity of the solution with respect to both the time and the spatial variable in the functional space $L^1$. Now, we fix $\alpha=1/2$ and denote by $I_{1/2}$ the associated relative Fisher information
\begin{equation}
  \label{def:Ialpha}
I_{1/2}\left[\mu\,|\,\nu\right] \,:=\, \int_{\R^2}\left|\partial_v\ln{\left(\frac{2\mu}{\nu+\mu}\right)}\right|^2\mu\,\dD\bu\,.
\end{equation}
To deal with existence issues, we provide an entropy estimate
$$
H[\mu^\eps_{t,\bx}] \,:=\, \int_{\R^2}\mu^\eps_{t,\bx}(\bu)\ln(\mu^\eps_{t,\bx}(\bu)) \,\dD\bu\,,
$$
and follow a classical regularization argument.
\subsection{A priori estimates}
\label{estimates wp}
In this section, we suppose that we have a smooth solution $\mu^\eps_{t,\bx}$ to equation \eqref{kinetic:eq} and provide exponential moment, entropy and continuity estimates for $\mu^\eps_{t,\bx}$.

We first define  $J[\mu^\eps_{t,\bx}]$ as
\begin{equation*}
 J[\mu^\eps_{t,\bx}]
\,:=\, 
\,\int_{\R^2}
e^{|\bu|^2/2}
\,\mu^\eps_{t,\bx}(\bu)\,\dD \bu\,,
\end{equation*}
and prove the following result.

\begin{proposition}[Exponential moments]\label{exp moments}
For any $\varepsilon > 0$, suppose that assumptions \eqref{hyp1:N} and \eqref{hyp1:psi}-\eqref{hyp:rho0} are fulfilled whereas the initial condition $\mu_0^\varepsilon$  verifies the first condition of \eqref{hyp3:f0}. Then, for  any solution $\mu^\eps$ to equation \eqref{kinetic:eq}, there exists a positive constant $C$ that may depend on $\eps$ such that
\[
J[\mu^\eps_{t,\bx}]
\,\,\leq\,\,
C\,, \quad\forall \,(t,\bx)\, \in\,\R^+\times K\,.
\]
\end{proposition}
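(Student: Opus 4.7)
The plan is to test \eqref{kinetic:eq} against the exponential weight $\phi(\bu) := e^{|\bu|^2/2}$ and integrate by parts in $\bu \in \R^2$ in order to derive a Gronwall-type inequality on $J[\mu^\eps_{t,\bx}]$. Using $\nabla_\bu \phi = \bu\,\phi$ and $\partial_v^2 \phi = (1+v^2)\phi$, the formal identity reads
\begin{equation*}
\frac{\dD}{\dD t} J(t,\bx) \,=\, \int_{\R^2} F(t,\bx,\bu)\,\phi(\bu)\,\mu^\eps_{t,\bx}(\bu)\,\dD\bu, \quad F := \bu\cdot \mathbf{b}^\eps + 1 + v^2 - \frac{\rho_0^\eps(\bx)}{\eps}\,v(v-\cV^\eps).
\end{equation*}
After expanding $\mathbf{b}^\eps$ and using $\mathcal{K}_\Psi[\rho_0^\eps\mu^\eps] = v\,\Psi *_r \rho_0^\eps(\bx) - \Psi *_r(\rho_0^\eps\,\cV^\eps)(t,\bx)$, the integrand $F$ splits into the super-linearly confining term $vN(v) = v^2\omega(v)$ with $\omega \to -\infty$ by \eqref{hyp1:N}, the stiff dissipation $-\frac{\rho_0^\eps}{\eps}v^2$, the damping $-bw^2$, and several lower-order cross-products involving $v$, $w$, $\cV^\eps$, and the non-local quantities $\Psi *_r \rho_0^\eps$ and $\Psi *_r(\rho_0^\eps\,\cV^\eps)$.

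The principal obstacle is that $F$ depends on the a priori unbounded macroscopic voltage $\cV^\eps(t,\bx)$, both through the stiff cross-term $\frac{\rho_0^\eps}{\eps}v\,\cV^\eps$ and through the non-local contribution. To handle this, I would first establish a uniform polynomial moment bound $M_2[\mu^\eps](t,\bx) \leq C_\eps$ on $\R^+\times K$ by running the scheme of Proposition \ref{prop:1}: since \eqref{hyp2:N} is not assumed here, the Hölder-type closure of Proposition \ref{prop:1} is replaced by a direct $L^\infty_\bx$ argument --- \eqref{hyp1:N} alone produces an arbitrarily large negative coefficient $-k\,M_2^{(v)}$ in $\frac{\dD}{\dD t}M_2$ by picking $R_k$ with $\omega(v)\leq -k$ on $\{|v|\geq R_k\}$; Young's inequality on the cross-product $(a-1)vw$ yields the damping $-\tfrac{b}{2}\,M_2^{(w)}$ plus a lower-order $M_2^{(v)}$ contribution; and the non-local term is handled via the pointwise bound $\|\Psi *_r \rho_0^\eps\|_{L^\infty(K)}\leq C$ from \eqref{hyp1:psi}--\eqref{hyp:rho0} together with Jensen's inequality $|\cV^\eps(t,\bx)|^2 \leq M_2[\mu^\eps](t,\bx)$. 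Choosing $k$ large enough to absorb all positive contributions and passing to the supremum $\bar M_2(t) := \sup_{\bx\in K}M_2[\mu^\eps](t,\bx)$ closes a differential inequality of the form $\tfrac{\dD^+}{\dD t}\bar M_2 \leq -\alpha\,\bar M_2 + C_\eps$, hence $\bar M_2 \leq C_\eps$ uniformly on $\R^+$; in particular $\|\cV^\eps\|_{L^\infty(\R^+\times K)}\leq C_\eps$ and $\|\Psi *_r(\rho_0^\eps\,\cV^\eps)\|_{L^\infty(\R^+\times K)}\leq C_\eps$.

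With these bounds in hand, Young's inequality applied to every cross-product in $F$ (using small multiples of $v^2$, which are absorbed by $vN(v)$ via the super-linear decay of $\omega$, and a fraction of $w^2$ strictly less than $b$) produces the pointwise upper bound
\begin{equation*}
F(t,\bx,\bu) \,\leq\, v^2\,\bigl(\omega(v) + C_\eps\bigr) - \tfrac{b}{2}\,w^2 + C_\eps'.
\end{equation*}
Choosing $R_\eps$ such that $\omega(v) + C_\eps \leq -1$ for $|v|\geq R_\eps$ and splitting the integral at $\{|\bu|\leq R_\eps'\}$ for $R_\eps' := \max(R_\eps,\sqrt{2})$, one has $F\phi \leq -c\,|\bu|^2\phi + C_\eps'\phi$ outside and $F\phi \leq C_\eps''\,e^{(R_\eps')^2/2}$ inside; integrating against $\mu^\eps_{t,\bx}$ and using the elementary inequality $|\bu|^2\,\phi(\bu)\geq 2(\phi(\bu)-1)$ to exchange polynomial decay for a $-J$ contribution, I arrive at the Gronwall inequality
\begin{equation*}
\frac{\dD}{\dD t} J(t,\bx) \,\leq\, - J(t,\bx) + C_\eps''',
\end{equation*}
which, together with the initial bound $J(0,\bx)\leq M^\eps$ from \eqref{hyp3:f0}, yields $J(t,\bx) \leq M^\eps + C_\eps'''$ uniformly on $\R^+\times K$. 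The main technical subtlety is the rigorous justification of the integration by parts against the unbounded weight $\phi$, which is handled by testing the weak formulation of Definition \ref{notion de solution} against cutoffs $\phi\,\chi_R$ and passing to the limit $R\to\infty$, using the propagated polynomial moments as Lebesgue-integrable majorants.
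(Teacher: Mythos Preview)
Your proof is correct and follows essentially the same route as the paper: test \eqref{kinetic:eq} against $e^{|\bu|^2/2}$, control the $\cV^\eps$-dependent terms via a uniform bound on $\cV^\eps$, then use the super-linear confinement \eqref{hyp1:N} together with the damping in $w$ to close a Gronwall inequality on $J$. The paper's decomposition of the integrand into $\cJ_1,\cJ_2,\cJ_3$ is exactly your splitting of $F$.

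The only difference is in how the bound on $\cV^\eps$ is obtained. The paper simply invokes Corollary~\ref{cor:1} (hence Proposition~\ref{prop:1}) to get $\|\cV^\eps\|_{L^\infty(\R^+\times K)}\leq C$ and moves on. You instead re-derive the $M_2$ bound by a direct $\sup_{\bx\in K}$ closure, motivated by the observation that \eqref{hyp2:N} is not listed among the hypotheses of the proposition. This extra care is harmless but not strictly needed: Proposition~\ref{prop:1} itself only uses \eqref{hyp1:N} on the drift (not \eqref{hyp2:N}), and the polynomial-moment assumptions \eqref{hyp1:f0}--\eqref{hyp2:f0} it requires on the initial data are implied by the exponential-moment bound in \eqref{hyp3:f0} (with $\eps$-dependent constants, which the proposition allows). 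Your $L^\infty_\bx$ argument is a legitimate and slightly more self-contained alternative to the two-step averaged/local scheme of Proposition~\ref{prop:1}, and it avoids the constraint $r>1$ in the H\"older step, but the payoff is purely cosmetic here since all the required hypotheses are available anyway.
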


{\begin{remark}
	We emphasize that if we consider a simplified model with homogeneous adaptation variable $w$, it is possible to prove that exponential moment are not only propagated as in Proposition \ref{exp moments} but also created, thanks to the super-linear confining properties  \eqref{hyp1:N}-\eqref{hyp2:N} of the drift $N$. However, this approach does not work when considering the full model. Indeed, since the dynamics are linear with respect to $w$, one can not expect any gain with respect to $w$. In addition, due to the crossed terms between the $v$ and the $w$ variables displayed in equation $\eqref{kinetic:eq}$ on $\mu^\eps$, it seems hard to estimate exponential moments with respect to the $v$ and $w$ variables separetely.\\
	On top of that, one can see in the proof of Proposition \ref{exp moments} that super-linear (or at least linear with large enough coefficient) confining properties are required on $N$ to control the term $\cJ_2$ (see below) and particularly the crossed terms and the convolution term that it displays.
\end{remark}}

\begin{proof}
We multiply equation \eqref{kinetic:eq} by $e^{|\bu|^2/2}$ and integrate with respect to $\bu \in \R^2$, after an integration by part, it yields
\begin{align*}
    \,\frac{\dD}{\dD t}J[\mu^\eps_{t,\bx}]
\,    =\,
        \cJ\,,
\end{align*}
where $\cJ$ is split into
$
\cJ \,=\,  \cJ_{1}
\,+\, \cJ_{2}
\,+\, \cJ_{3}\,,
$ with 
\begin{equation*}
\left\{
\begin{array}{l}
    \displaystyle  \cJ_{1} \,=\,
    \left(
    \frac{1}{\eps}\,\left(
    \rho_0^\eps\,
    \cV^\eps
    \right)\,
    +\,
    \Psi*_r
    \left(
    \rho_0^\eps\,
    \cV^\eps
    \right)
    \right)(t,\bx)\,
    \int_{\R^2}
    v
    \,
    e^{|\bu|^2/2}\,
    \mu^\eps_{t,\bx}(\bu)\,\dD \bu
   \, ,\\[1em]
    \displaystyle \cJ_{2} \,=\,
    \int_{\R^2}
    \left(
    w\, A(\bu)
    \,-\,v\,w
    \,-\,
    v^2\,
    \left(
    \frac{\rho_0^\eps(\bx)}{\eps}
    +
    \Psi*_r \rho_0^\eps(\bx)
    -
    1
    \right)
    +
    1
    \right)
    e^{|\bu|^2/2}\,
    \mu^\eps_{t,\bx}(\bu)\,\dD \bu
    \,,\\[1em]
    \displaystyle \cJ_{3} \,=\,
    \int_{\R^2}
    v \, N(v)
    \,
    e^{|\bu|^2/2}\,
    \mu^\eps_{t,\bx}(\bu)\,\dD \bu
    \,.
    \end{array}
    \right.
\end{equation*}
The  proof follows the same lines as the one given in Proposition \ref{prop:1} on the moment estimates except that here the non-local term $\cJ_1$ can be roughly estimated. Indeed, applying Corollary \ref{cor:1} and assumptions \eqref{hyp2:psi} \& \eqref{hyp:rho0}, we first obtain
\[
\cJ_{1}
\,\leq\,
C
\,\int_{\R^2}
|v|\,
e^{|\bu|^2/2}\,\mu^\eps_{t,\bx}(\bu)\,\dD \bu\,,
\]
for some positive constant 
$\ds C\,>\,0$
that may depend on $\eps$.
Then we estimate $\cJ_2$ and $\cJ_3$ following the computations in the proof of Proposition \ref{prop:1} and taking advantage of the confinement property \eqref{hyp1:N} on $N$. In the end, we get
\begin{equation*}
\frac{\dD}{\dD t}J[\mu^\eps_{t,\bx}] 
\,\leq\,
\int_{\R^2}
\left(
\left(C\,-\,\omega^-(v)
\right)|v|^2
\,-\,
\alpha
|w|^2
+
C
\right)
e^{|\bu|^2/2}
\, \mu^\eps_{t,\bx}(\bu)\,\dD \bu\,,
\end{equation*}
where $C$ and $\alpha$ are two positive constants that may depend on $\eps$ and where $\omega^-$ is defined as
\[
\omega^-(v)
\,=\,
\left(
\omega(v) \,\,\mathds{1}_{|v| \geq 1}
\right)^{-}\,,
\]
where $\omega$ is given in \eqref{hyp1:N}. Finally, according to \eqref{hyp1:N}, we have 
\[
\lim_{|\bu| \rightarrow + \infty}
\left(C\,-\,\omega^-(v)
\right)|v|^2
\,-\,
\alpha
|w|^2
+
C
\,=\,
-\infty\,,
\]
hence it gives 
\[
\,\frac{\dD}{\dD t} J[\mu^\eps_{t,\bx}]
\,\leq\,
C\,-\,\frac{1}{C}\, J[\mu^\eps_{t,\bx}]\,.
\]
We conclude the proof applying Gronwall's lemma and using the first assumption in \eqref{hyp3:f0}.
\end{proof}

Then we provide an entropy estimate of the solution $\mu^\eps_{t,\bx}$.

\begin{proposition}[Entropy estimates]
For any $\varepsilon > 0$,  suppose that assumptions \eqref{hyp1:N}-\eqref{hyp2:N} and \eqref{hyp1:psi}-\eqref{hyp:rho0} are fulfilled whereas the initial condition $\mu_0^\varepsilon$  verifies the second condition in \eqref{hyp3:f0}. Then, for  any solution $\mu^\eps$ to equation \eqref{kinetic:eq}, there exists a positive constant $C>0$ that may depend on $\eps$ such that
\[
H[\mu^\eps_{t,\bx}]
\,\,\leq\,\,
H[\mu^\eps_{0,\bx}]\,
+
\, C\,t\,, \quad\forall \,(t,\bx)\, \in\,\R^+\times K\,.
\]
\end{proposition}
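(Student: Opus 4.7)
The approach is to test equation \eqref{kinetic:eq} against $1+\ln\mu^\eps_{t,\bx}(\bu)$ and integrate over $\bu\in\R^2$. The formal computation is justified by a standard regularization procedure (e.g.\ a small parabolic regularization in the $w$ variable), taking advantage of the uniform-in-time exponential moment bound from Proposition \ref{exp moments} to control the boundary contributions at infinity and to pass to the limit.

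Splitting the resulting identity as $\frac{\dD}{\dD t}H[\mu^\eps_{t,\bx}]=\cT_1+\cT_2+\cT_3$ according to the transport, diffusion, and stiff relaxation contributions, I would compute each term by integration by parts. For the transport term, using $\mu^\eps\,\nabla_\bu\ln\mu^\eps=\nabla_\bu\mu^\eps$ together with $\partial_v\cK_\Psi[\rho_0^\eps\mu^\eps]=\Psi*_r\rho_0^\eps(\bx)$ and $\partial_w A=-b$, I obtain
\[
\cT_1 \,=\, -\int_{\R^2}\mathrm{div}_{\bu}\,\mathbf{b}^\eps\,\mu^\eps_{t,\bx}(\bu)\,\dD\bu \,=\, -\int_{\R^2}\bigl(N'(v)\,-\,\Psi*_r\rho_0^\eps(\bx)\,-\,b\bigr)\mu^\eps_{t,\bx}(\bu)\,\dD\bu.
\]
The diffusion term yields minus the Fisher information in $v$,
\[
\cT_2 \,=\, -\int_{\R^2}\frac{|\partial_v \mu^\eps_{t,\bx}|^2}{\mu^\eps_{t,\bx}}\,\dD\bu \,\leq\, 0,
\]
which I would simply discard. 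Finally, two successive integrations by parts combined with the normalization $\int\mu^\eps_{t,\bx}\,\dD\bu=1$ give the explicit closed form
\[
\cT_3 \,=\, \frac{\rho_0^\eps(\bx)}{\eps},
\]
which is finite (though $\eps$-dependent) by assumption \eqref{hyp:rho0}.

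Gathering these contributions and bounding $\|\Psi*_r\rho_0^\eps\|_{L^\infty(K)}$ by assumptions \eqref{hyp2:psi}-\eqref{hyp:rho0}, I arrive at
\[
\frac{\dD}{\dD t}H[\mu^\eps_{t,\bx}] \,\leq\, -\int_{\R^2}N'(v)\,\mu^\eps_{t,\bx}(\bu)\,\dD\bu \,+\, C_\eps,
\]
for some $\eps$-dependent constant $C_\eps>0$. The main obstacle is then to control the remaining integral uniformly in $t$. Since $N\in\scC^2(\R)$ and $|N(v)|\leq C|v|^p$ at infinity thanks to \eqref{hyp2:N}, a standard argument gives polynomial control $|N'(v)|\leq C(1+|v|^{p-1})$; combined with Proposition \ref{exp moments}, which ensures that all polynomial moments of $\mu^\eps_{t,\bx}$ are bounded uniformly in $(t,\bx)\in\R^+\times K$, this bounds $\int N'(v)\,\mu^\eps_{t,\bx}\,\dD\bu$ by an $\eps$-dependent constant. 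Integrating in time then yields the announced linear-in-$t$ estimate $H[\mu^\eps_{t,\bx}] \leq H[\mu^\eps_{0,\bx}] + Ct$.
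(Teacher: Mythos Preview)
Your overall strategy is sound and your identity
\[
\frac{\dD}{\dD t}H[\mu^\eps_{t,\bx}] \,+\, I[\mu^\eps_{t,\bx}] \,=\, \frac{\rho_0^\eps}{\eps} \,+\, \Psi*_r\rho_0^\eps \,+\, b \,-\, \int_{\R^2}N'(v)\,\mu^\eps_{t,\bx}(\bu)\,\dD\bu
\]
is correct. The gap is in the sentence ``a standard argument gives polynomial control $|N'(v)|\leq C(1+|v|^{p-1})$''. No such argument exists: the assumptions \eqref{hyp1:N}--\eqref{hyp2:N} bound $N(v)/v$, not $N'(v)$, and a $\scC^2$ function with $|N(v)|\leq C|v|^p$ can have an arbitrarily fast-growing derivative (e.g.\ add an oscillatory perturbation like $v^2\sin(e^v)$ to $-v^3$). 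So the integral $\int N'(v)\,\mu^\eps_{t,\bx}$ is not controlled by any moment bound, polynomial or exponential.

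The fix is to undo your last integration by parts on the $N$ term: keep it as $\int N(v)\,\partial_v\ln\mu^\eps_{t,\bx}\,\mu^\eps_{t,\bx}\,\dD\bu$ and use Young's inequality to write
\[
\int_{\R^2} N(v)\,\partial_v\ln\mu^\eps_{t,\bx}\,\mu^\eps_{t,\bx}\,\dD\bu \,\leq\, \frac{1}{2}\,I[\mu^\eps_{t,\bx}] \,+\, C\int_{\R^2}|N(v)|^2\,\mu^\eps_{t,\bx}\,\dD\bu\,.
\]
The first term is absorbed by the Fisher information you discarded, and the second is bounded via \eqref{hyp2:N} and the $2p$-moment control of Proposition~\ref{prop:1}. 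This is precisely the route taken in the paper; your extra integration by parts trades a term you can control ($|N|^2$, of order $2p$) for one you cannot ($N'$).
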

{\begin{remark}
	The latter result raises the natural question of wether the entropy estimate holds uniformly with respect to time. An answer was given in  \cite[Theorem 2.2]{mlimit3} in a simplified setting: authors consider a spatially homogeneous network and treat the case where $N$ is a cubic function. They provide uniform estimate for the norm of the solution to \eqref{kinetic:eq} in regular norms, namely in $H^1$ and $H^1\cap H_v^2$ spaces. It might be possible to adapt their argument to a low regularity setting such as ours and to obtain some uniform in time estimates on the entropy. However, this is an interesting question on its own and since the present work focuses on the asymptotic $\eps \rightarrow 0$ rather than on the long time behavior, we did not follow this path here.
\end{remark}}
\begin{proof}
We multiply equation \eqref{kinetic:eq} by $\ln{
\left(
\mu^\eps_{t,\bx}
\right)
}$ and integrate with respect to $\bu \in \R^2$. Then we use conservation of mass for \eqref{kinetic:eq} and integrate by part. It yields
\begin{align*}
    \,\frac{\dD}{\dD t}H[\mu^\eps_{t,\bx}]
    \,+\,
I\left[
\mu^\eps_{t,\,\bx}
\right]
\,    =\,
\frac{\rho_0^\eps}{\eps}
\,
+\,
\Psi*_r\rho_0^\eps\,
+\,
b
\,+\,
\int_{\R^2}\,
N(v)\,
\partial_v
\left(
\ln{
\mu^\eps_{t,\bx}
}
\right)\,
\mu^\eps_{t,\bx}\,\dD \bu\,,
\end{align*}
where the Fisher information $I$ is given by
\[
I\left[
\mu^\eps_{t,\,\bx}
\right]
\,=\,
\int_{\R^2}
\left|
\partial_v\,
\ln{
\left(
\mu^\eps_{t,\,\bx}
\right)
}
\right|^2\,
\mu^\eps_{t,\,\bx}\,
\dD \bu\,.
\]
According to Young's inequality, we have
\[
\int_{\R^2}\,
N(v)\,
\partial_v
\left(
\ln{
\mu^\eps_{t,\bx}
}
\right)\,
\mu^\eps_{t,\bx}\,\dD \bu
\,\leq\,
\frac{C}{\eta}
\int_{\R^2}\,
\left|N(v)\right|^2\,
\mu^\eps_{t,\bx}\,\dD \bu
\,+\,
\eta\,
I
\left[
\mu^\eps_{t,\bx}
\right]\,,
\]
for all positive $\eta$. We choose $\eta\,=\,1/2$, apply Proposition \ref{prop:1}, and assumptions \eqref{hyp2:N}, \eqref{hyp2:psi} \&  \eqref{hyp:rho0}. This yields
\begin{align*}
    \,\frac{\dD}{\dD t}H[\mu^\eps_{t,\bx}]
\,    \leq\,
C\,,
\end{align*}
for some $C>0$ that may depend on $\eps$. Then we integrate the former inequality with respect to time and obtain the result.
\end{proof}

We now turn to continuity estimates with respect to the spatial variable and close the estimates in $L^1$ making use of the modified relative entropy $H_\alpha\left[\,
\mu\, |\,
\nu\,
\right]$ given in \eqref{def:Halpha}.

\begin{proposition}[Continuity with respect to $\bx$]
  \label{continuity:space}
For any $\varepsilon > 0$,  suppose that assumption \eqref{hyp1:N} and \eqref{hyp1:psi}-\eqref{hyp:rho0} are fulfilled whereas the initial condition $\mu_0^\varepsilon$  verifies the first condition of \eqref{hyp3:f0}. Then, for  any solution $\mu^\eps$ to equation \eqref{kinetic:eq}, the following estimate holds
\[
\left\|
\mu^\eps_{t,\bx}
\,-\,
\mu^\eps_{t,\by}
\right\|_{L^1(\R^2)}
\,\leq\,
C\,
e^{Ct}\,
\gamma(\bx,\by),\,\,\,
\forall\,
(\bx\,,\, \by) \in K^2\,,
\]
for some positive constant $C$ which may depend on $\eps$ and where $\gamma$ is the following continuous function
\[
\gamma(\bx,\by)
\,=\,
\left\|
\mu^\eps_{0,\bx}
\,-\,
\mu^\eps_{0,\by}
\right\|_{L^1(\R^2)}^{1/2}
\,+\,
\left|
\rho_0^\eps(\bx)
\,-\,
\rho_0^\eps(\by)
\right|
\,+\,
\left\|
\Psi(\bx,\,\cdot)
\,-\,
\Psi(\by,\,\cdot)
\right\|_{L^1(K)}\,.
\]
\end{proposition}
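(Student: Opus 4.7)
The plan is to close an entropy--entropy-dissipation estimate for the modified relative entropy $H_{1/2}$ of \eqref{def:Halpha}, and then to convert the bound to $L^1$ through Lemma \ref{entropy V.S. L1}. Set $\mu_1 := \mu^\eps_{t,\bx}$, $\mu_2 := \mu^\eps_{t,\by}$ and $\kappa := (\mu_1 + \mu_2)/2$; under assumption \eqref{hyp3:f0}, Proposition \ref{exp moments} provides a uniform exponential moment on $K$ which legitimates the manipulations below.

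The first step is to differentiate $H_{1/2}[\mu_1 | \mu_2]$ in time using equation \eqref{kinetic:eq} at $\bx$ and at $\by$. Following the classical derivation of the Fisher-information dissipation for Fokker--Planck equations, and exploiting the favourable sign of the $-\partial_v^2$ diffusion together with the stiff $1/\eps$-drift, one obtains
\[
\frac{d}{dt} H_{1/2}[\mu_1 | \mu_2] \,+\, c_0 \, I_{1/2}[\mu_1|\mu_2] \,\leq\, \cR(t),
\]
where $I_{1/2}$ is the relative Fisher information \eqref{def:Ialpha} and $\cR$ encodes the mismatch between the drifts and stiff coefficients at $\bx$ and $\by$. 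Since the $w$-component of $\mathbf{b}^\eps$ is the $\bx$-independent affine map $A$ and the diffusion acts only on $v$, every term in $\cR$ can be written against $\partial_v\log(\mu_1/\kappa)$ and estimated by Cauchy--Schwarz against $\sqrt{I_{1/2}}$.

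The second step is to bound the pieces of $\cR$. The drift $\mathbf{b}^\eps$ depends on $\bx$ only through $\Psi*_r\rho_0^\eps$ and $\Psi*_r(\rho_0^\eps\cV^\eps)$, so assumptions \eqref{hyp2:psi}, \eqref{hyp:rho0} and Corollary \ref{cor:1} give $|\mathbf{b}^\eps(t,\bx,\bu) - \mathbf{b}^\eps(t,\by,\bu)| \leq C(1+|v|)\|\Psi(\bx,\cdot) - \Psi(\by,\cdot)\|_{L^1(K)}$, while the difference of the stiff drifts contributes terms of size $(1+|v|)\eps^{-1}(|\rho_0^\eps(\bx) - \rho_0^\eps(\by)| + |\cV^\eps(\bx) - \cV^\eps(\by)|)$. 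The non-trivial sub-step is to re-absorb the macroscopic difference $|\cV^\eps(\bx) - \cV^\eps(\by)| = \bigl|\int v(\mu_1-\mu_2)\,d\bu\bigr|$ into the entropy itself: since $v$ is unbounded, the bare Csisz\'ar--Kullback inequality does not suffice, but combining the uniform exponential moment of Proposition \ref{exp moments} with a weighted Pinsker-type inequality yields $|\cV^\eps(\bx) - \cV^\eps(\by)| \leq C\sqrt{H_{1/2}[\mu_1|\mu_2]}$. Using this, Young's inequality to absorb a fraction of $I_{1/2}$ into the dissipation, and Proposition \ref{prop:1} to control the $(1+|v|)^2$-moment of $\mu_1$, leads to the closed differential inequality
\[
\frac{d}{dt} H_{1/2}[\mu_1 | \mu_2] \,\leq\, C_\eps\, H_{1/2}[\mu_1|\mu_2] \,+\, C_\eps\bigl(|\rho_0^\eps(\bx) - \rho_0^\eps(\by)| + \|\Psi(\bx,\cdot) - \Psi(\by,\cdot)\|_{L^1(K)}\bigr)^2,
\]
with a constant $C_\eps$ independent of $(\bx,\by)$.

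Gronwall's lemma together with the linear upper bound $H_{1/2}[\mu^\eps_{0,\bx}|\mu^\eps_{0,\by}] \leq \|\mu^\eps_{0,\bx} - \mu^\eps_{0,\by}\|_{L^1}$ from Lemma \ref{entropy V.S. L1} then yields an $e^{C_\eps t}(\|\mu^\eps_{0,\bx} - \mu^\eps_{0,\by}\|_{L^1} + \gamma(\bx,\by)^2)$ bound on $H_{1/2}[\mu^\eps_{t,\bx}|\mu^\eps_{t,\by}]$, and the matching Csisz\'ar--Kullback lower bound $\|\mu_1-\mu_2\|_{L^1}^2 \leq 8\,H_{1/2}[\mu_1|\mu_2]$ converts this into the announced inequality. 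The square root of $\|\mu^\eps_{0,\bx} - \mu^\eps_{0,\by}\|_{L^1}$ appearing in the definition of $\gamma$ is exactly the price paid when moving between the two sides of the $L^1$/entropy equivalence. The main obstacle of the argument is the stiff $\eps^{-1}|\cV^\eps(\bx) - \cV^\eps(\by)|$ contribution, which must be re-absorbed into the very quantity being estimated; this is where the exponential moment of Proposition \ref{exp moments}, and therefore assumption \eqref{hyp3:f0}, plays a decisive role.
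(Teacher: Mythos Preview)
Your proposal is correct and follows essentially the same route as the paper: differentiate the modified relative entropy $H_{1/2}$, use the Fisher dissipation $I_{1/2}$, reduce the remainder to a term involving $(\mathcal{N}_{\bx}-\mathcal{N}_{\by})$ tested against $\partial_v\ln(\mu_1/\kappa)$, re-absorb $|\cV^\eps(t,\bx)-\cV^\eps(t,\by)|$ into $H_{1/2}^{1/2}$ via the weighted Csisz\'ar--Kullback inequality of Bolley--Villani (which is exactly where the exponential moments from Proposition~\ref{exp moments} enter), apply Young's inequality and Gronwall, and convert back to $L^1$ through Lemma~\ref{entropy V.S. L1}. The only cosmetic difference is that the paper writes the cancellation of the $\bx$-independent transport terms explicitly by first deriving the equation for $\kappa$, whereas you invoke it as the ``classical derivation''; the algebra and the decisive use of \eqref{hyp3:f0} are identical.
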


\begin{proof}
Our strategy consists in estimating 
$\ds
H_{1/2}
$ {instead of the $L^1$ error in order to take advantage of the entropy dissipation, then Lemma \ref{entropy V.S. L1} will ensure the continuity in $L^1$.}

We choose some $\bx$ and $\by$ in $K$ all along this proof and consider 
$
\ds
\kappa
\,=\,
\left(
\mu^\eps_{t,\bx}
\,+\,
\mu^\eps_{t,\by}
\right)/2$. 
It satisfies the following equation
\[
\partial_t\, \kappa
        \,+\,
        \partial_v 
        \left( \,
        \left(N(v)
         -w
         \right)\,
         \kappa
         \,\right) 
         \,+\,
         \partial_w
        \left( \,
       A(\bu)\,  \kappa
       \,\right) 
        \,-\,
        \partial_v^2\, 
        \kappa
        \,-\,
        \frac{1}{2}
        \partial_v
        \left(\,
        \mathcal{N}_{
        \bx}\,\mu^\eps_{t,\bx}
        \,+\,
        \mathcal{N}_{
        \by}\,\mu^\eps_{t,\by}
        \right)
        \,=\,0\,,
\]
where $\mathcal{N}$ gathers the non-linear terms and is given by
\[
\mathcal{N}_{
        \bx}(t,v)
        \,:=\,
\frac{\rho^\eps_0(\bx)}{\eps} (v-\cV^\eps(t,\,\bx))
\,+\,
\left(
\Psi*_r\rho_0^\eps(\bx)\,v
\,-\,
\Psi*_r
\left(\rho_0^\eps
\mathcal{V}^\eps
\right)(t,\,\bx)
\right)\,.
\]
We compute the time derivative of 
$\ds
H_{1/2}
\left[\,
\mu^\eps_{t,\,\bx}\,|\,
\mu^\eps_{t,\,\by}\,
\right]
$
using the former equation, equation \eqref{kinetic:eq} and conservation of mass for \eqref{kinetic:eq}. After an integration by part, all the terms associated to linear transport cancel and we obtain
\[
\frac{\dD}{\dD t}
H_{1/2}
\left[\,
\mu^\eps_{t,\,\bx}\,|\,
\mu^\eps_{t,\,\by}\,
\right]
\,+\,
I_{1/2}
\left[\,
\mu^\eps_{t,\,\bx}\,|\,
\mu^\eps_{t,\,\by}\,
\right]
\,=\,
\cH_1\,,
\]
where $I_{1/2}\left[\mu\,|\,\nu\right]$ is given by \eqref{def:Ialpha} and corresponds to the dissipation due to the second order term  whereas  $\cH_1$ is given by
\[
\cH_1
\,=\,
-\int_{\R^2}\,
\partial_{v}
\left(
\frac{\mu^\eps_{t,\bx}}{\kappa}
\right)\,
\left(\,
\mathcal{N}_{\bx}\,\kappa
\,-\,
\frac{1}{2}
\left(
\,\mathcal{N}_{\bx}
\,\mu^\eps_{t,\bx}
\,+
\,
\mathcal{N}_{\by}\,\mu^\eps_{t,\by}
\right)
\right)\,
\dD \bu\,.
\]
Exact computations yield
\[
\cH_1
=
-\frac{1}{2}
\int_{\R^2}\,
\partial_v
\left(
\ln{
	\left(
	\frac{
		\mu^\eps_{t,\bx}
	}{
		\kappa
	}
	\right)
}\,
\right)
\left(\,
\mathcal{N}_{\bx}\,
\,-\,
\mathcal{N}_{\by}\,
\right)
\,
\frac{\mu^\eps_{t,\by}}{\kappa}\,
\mu^\eps_{t,\bx}
\,
\dD \bu\,.
\]
{Then, we first notice that it holds
\[
\left|\,
\mathcal{V}^\eps
\left(
t,\bx
\right)
\,-\,
\mathcal{V}^\eps
\left(
t,\by
\right)
\,
\right|
\,\leq\,2\,W_1\left(\mu^\eps_{t,\bx},\kappa\right)\,.\]
Furthermore, from Proposition \ref{exp moments}, we get that $\mu^\eps_{t,\bx}$, $\mu^\eps_{t,\by}$ and therefore $\kappa$ have exponential moments, hence we can apply Corollary $2.4$ in \cite{Bolley/Villani}, which ensures
\[
W_1\left(\mu^\eps_{t,\bx},\kappa\right)\,\leq\,
C\,H_0
\left[
\mu^\eps_{t,\bx}\,|\,
\kappa
\right]^{1/2}\,.
\]
Since by definition it holds $\ds H_0
\left[
\mu^\eps_{t,\bx}\,|\,
\kappa
\right]
\,=\,H_{1/2}
\left[
\mu^\eps_{t,\bx}\,|\,
\mu^\eps_{t,\by}
\right]$, we obtain
\[
\left|\,
\mathcal{V}^\eps
\left(
t,\bx
\right)
\,-\,
\mathcal{V}^\eps
\left(
t,\by
\right)
\,
\right|
\,\leq\,
C\,H_{1/2}
\left[
\mu^\eps_{t,\bx}\,|\,
\mu^\eps_{t,\by}
\right]^{1/2}\,,
\]
for some positive constant $C>0$ which may depend on $\eps$. }
Consequently, using assumption \eqref{hyp:rho0} and Corollary \ref{cor:1}, we obtain the following bound for $\cH_1$
\[
\left|
\cH_1
\right|
\,\leq\,
C
\left(
\beta(\bx,\by)
\,+\,
H_{1/2}
\left[
\mu^\eps_{t,\bx}\,|\,
\mu^\eps_{t,\by}
\right]^{1/2}
\right)
\int_{\R^2}
\left|
\partial_v
\ln{
\left(
\frac{
\mu^\eps_{t,\bx}
}{
\kappa
}
\right)
}
\right|\,
(1\,+\,|v|)\,
\frac{\mu^\eps_{t,\by}}{\kappa}\,
\mu^\eps_{t,\bx}
\,
\dD \bu\,,
\]
where $C$ is a positive constant that may depend on $\eps$ and where $\beta$ is given by 
\[
\beta(\bx,\by)
\,=\,
\left|
\rho_0^\eps(\bx)
\,-\,
\rho_0^\eps(\by)
\right|
\,+\,
\left\|
\Psi(\bx,\,\cdot)
\,-\,
\Psi(\by,\,\cdot)
\right\|_{L^1(K)}\,.
\]
Then we apply Young's inequality, Proposition \ref{prop:1} and the bound 
$
\ds
\left|\mu^\eps_{t,\by}\,/\,
\kappa \right|
\,\leq\,
2
$. It yields
\[
\left|
\cH_1
\right|
\,\leq\,
C\,\eta^{-1}
\left(
\beta(\bx,\by)^2
\,+\,
H_{1/2}
\left[
\mu^\eps_{t,\bx}\,|\,
\mu^\eps_{t,\by}
\right]
\right)
\,+\,
\eta\,
I_{1/2}
\left[
\mu^\eps_{t,\bx}\,|\,
\mu^\eps_{t,\by}
\right]\,
,
\]
for all positive $\eta$ and 
where $C$ may depend on $\eps$. Therefore, taking $\eta\,=\,1/2$, we obtain 
\[
\frac{\dD}{\dD t}\,
H_{1/2}
\left[
\mu^\eps_{t,\bx}\,|\,
\mu^\eps_{t,\by}
\right]
\,+\,
\frac{1}{2}\,
I_{1/2}
\left[
\mu^\eps_{t,\bx}\,|\,
\mu^\eps_{t,\by}
\right]
\,\leq\,
C
\left(
\beta(\bx,\by)^2
\,+\,
H_{1/2}
\left[
\mu^\eps_{t,\bx}\,|\,
\mu^\eps_{t,\by}
\right]
\right)\,.
\]
We apply Gronwall's lemma and Lemma \ref{entropy V.S. L1}. It yields
\[
\left\|
\mu^\eps_{t,\bx}
\,-\,
\mu^\eps_{t,\by}
\right\|_{L^1(\R^2)}
\,\leq\,
C\,
e^{Ct}
\left(
\left\|
\mu^\eps_{0,\bx}
\,-\,
\mu^\eps_{0,\by}
\right\|_{L^1(\R^2)}^{1/2}
\,+\,
\beta(\bx,\by)
\right)\,.
\]
\end{proof}

We now turn to continuity with respect to the time variable and split the proof into two steps. First we  prove continuity at time $t=0$ and then deduce continuity for all time $t>0$. 

\begin{proposition}[Continuity at time $t=0$]
  \label{continuity:time0}
Under the assumptions of Theorem \ref{WP mean field eq}, the following estimate holds
\[
\sup_{\bx \in K}\,
\left\|
\,
\mu^\eps_{t,\bx}
\,-\,
\mu^\eps_{0,\bx}
\,
\right\|_{L^1(\R^2)}
\,\leq\,
C\,\sqrt{t}\,,\,\,\,
\forall\,
t \in \R_+\,,
\]
for some positive constant $C$ which may depend on $\eps$.
\end{proposition}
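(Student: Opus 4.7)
The strategy follows the same lines as Proposition \ref{continuity:space}: monitor the modified relative entropy $H_{1/2}[\,\mu^\eps_{t,\bx}\,|\,\mu^\eps_{0,\bx}\,]$, which vanishes at $t=0$, show that its time derivative is controlled by itself plus a constant, and convert the resulting bound into an $L^1$ estimate via the Csiz\'ar-Kullback type inequality of Lemma \ref{entropy V.S. L1}. The essential difference with Proposition \ref{continuity:space} is that the reference measure $\mu^\eps_{0,\bx}$ does not evolve in time, so the cancellation of the linear transport terms that was available when both arguments solved \eqref{kinetic:eq} no longer occurs and these terms must be absorbed directly.

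Differentiating $H_{1/2}[\,\mu^\eps_{t,\bx}\,|\,\mu^\eps_{0,\bx}\,]$ in time, plugging in equation \eqref{kinetic:eq} for $\partial_t\mu^\eps_{t,\bx}$ (while $\partial_t\mu^\eps_{0,\bx}=0$) and integrating by parts with respect to $\bu$, I expect an identity of the form
\[
\frac{\dD}{\dD t} H_{1/2}\!\left[\mu^\eps_{t,\bx}\,|\,\mu^\eps_{0,\bx}\right] \,+\, \frac{1}{2}\, I_{1/2}\!\left[\mu^\eps_{t,\bx}\,|\,\mu^\eps_{0,\bx}\right] \,\leq\, \mathcal{R}_1 \,+\, \mathcal{R}_2\,,
\]
where $\kappa_{t,\bx}=(\mu^\eps_{t,\bx}+\mu^\eps_{0,\bx})/2$, the term $\mathcal{R}_1$ collects the drift contributions (namely the confining term $N(v)-w$, the affine adaptation term $A(\bu)$, the stiff short-range term $\rho_0^\eps(v-\mathcal{V}^\eps)/\eps$, and the non-local piece $\mathcal{K}_\Psi[\rho_0^\eps\mu^\eps]$) tested against $\partial_v\ln(\mu^\eps_{t,\bx}/\kappa_{t,\bx})$, while $\mathcal{R}_2$ denotes the cross contributions that arise when the spatial derivative in the Fisher-type quadratic form acts on $\mu^\eps_{0,\bx}$ rather than on $\mu^\eps_{t,\bx}$.

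Each piece of $\mathcal{R}_1$ is handled by Cauchy-Schwarz and Young's inequalities exactly as in the proof of Proposition \ref{continuity:space}: each term is split into a fraction of the form $\eta\,I_{1/2}$ that is absorbed into the left-hand side and a remainder controlled by polynomial or exponential moments of $\mu^\eps_{t,\bx}$, all of which are bounded uniformly in $(t,\bx)\in\R^+\times K$ by Propositions \ref{prop:1}, \ref{exp moments} and Corollary \ref{cor:1}. The cross remainder $\mathcal{R}_2$ involves $\partial_v\mu^\eps_{0,\bx}$, which is not regularized by the flow; this is precisely where assumption \eqref{hyp4:f0} enters, providing the uniform initial Fisher information $\sup_{\bx\in K}\|\nabla_\bu\sqrt{\mu^\eps_{0,\bx}}\|^2_{L^2(\R^2)}\leq m^\eps$ needed to dominate $\mathcal{R}_2$ after one more application of Young's inequality against $I_{1/2}$. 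Gathering these bounds yields a differential inequality of the form
\[
\frac{\dD}{\dD t} H_{1/2}\!\left[\mu^\eps_{t,\bx}\,|\,\mu^\eps_{0,\bx}\right] \,\leq\, C\,\bigl(\,1\,+\,H_{1/2}\!\left[\mu^\eps_{t,\bx}\,|\,\mu^\eps_{0,\bx}\right]\bigr)\,,
\]
with a constant $C$ possibly depending on $\eps$ but independent of $(t,\bx)\in\R^+\times K$.

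Since $H_{1/2}[\mu^\eps_{0,\bx}\,|\,\mu^\eps_{0,\bx}]=0$, Gronwall's lemma provides $H_{1/2}[\mu^\eps_{t,\bx}\,|\,\mu^\eps_{0,\bx}]\leq C(e^{Ct}-1)\leq C'\,t$ on any bounded time window, uniformly in $\bx\in K$. Applying the first inequality of Lemma \ref{entropy V.S. L1} with $\alpha=1/2$ converts this into the desired estimate $\|\mu^\eps_{t,\bx}-\mu^\eps_{0,\bx}\|_{L^1(\R^2)}\leq C\sqrt{t}$ for small $t$, while for large $t$ the trivial bound $\|\mu^\eps_{t,\bx}-\mu^\eps_{0,\bx}\|_{L^1(\R^2)}\leq 2$ is already of order $\sqrt{t}$. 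I expect the main technical obstacle to be the careful bookkeeping of the cross terms $\mathcal{R}_2$: without the $H^1$-type control on $\sqrt{\mu^\eps_{0,\bx}}$ ensured by \eqref{hyp4:f0}, these derivatives of the initial datum cannot be absorbed by the dissipation $I_{1/2}$ and the argument breaks down.
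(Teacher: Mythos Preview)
There is a genuine gap in your treatment of the adaptation drift $\partial_w(A(\bu)\,\mu^\eps)$. You list $A(\bu)$ among the contributions ``tested against $\partial_v\ln(\mu^\eps_{t,\bx}/\kappa_{t,\bx})$'' and propose to absorb it into $I_{1/2}$ via Young's inequality. But after integration by parts this term is tested against $\partial_w\ln(\cdot)$, not $\partial_v\ln(\cdot)$, and the relative Fisher information $I_{1/2}$ controls only $v$-derivatives (there is no diffusion in $w$). In Proposition~\ref{continuity:space} this difficulty never arises because both arguments of $H_{1/2}$ solve \eqref{kinetic:eq} and the $w$-transport terms cancel identically; here $\mu^\eps_{0,\bx}$ is frozen and they do not. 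Your scheme, as written, would leave a term of the form $\int |A|^2\,|\partial_w\ln(\mu^\eps_{t,\bx}/\kappa)|^2\,\mu^\eps_{t,\bx}\,\dD\bu$ with nothing to absorb it.

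The paper resolves this by two choices you did not make. First, it reverses the order of the arguments and works with $H_{1/2}[\,\mu^\eps_{0,\bx}\,|\,\mu^\eps_{t,\bx}\,]$ rather than $H_{1/2}[\,\mu^\eps_{t,\bx}\,|\,\mu^\eps_{0,\bx}\,]$, so that the weight in every integral is the initial datum. Second, it rewrites the $w$-drift contribution through the auxiliary function $\psi(x)=\ln x - x/2$, exploiting the algebraic identity $\mu^\eps_{0,\bx}\,\psi'(\mu^\eps_{0,\bx}/\kappa)=\mu^\eps_{t,\bx}/2$. This decomposes the term into two pieces: one bounded outright because $x\mapsto x\psi(x)$ and $x\mapsto x|\psi(x)|^2$ are bounded on $[0,2]$, and one involving only $\partial_w\ln\mu^\eps_{0,\bx}$---never $\partial_w\mu^\eps_{t,\bx}$---which is then controlled by the $\partial_w$-part of assumption~\eqref{hyp4:f0}. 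Thus \eqref{hyp4:f0} is used twice: once for the diffusive cross term (your $\mathcal R_2$) via $\partial_v\ln\mu^\eps_{0,\bx}$, and once here via $\partial_w\ln\mu^\eps_{0,\bx}$. The resulting differential inequality is simply $\frac{\dD}{\dD t}H_{1/2}\leq C$, with no $H_{1/2}$ on the right-hand side.
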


\begin{remark}
It is the only time that we use assumption \eqref{hyp4:f0}.
\end{remark}

\begin{proof}
All along this proof, we set 
$
\ds
\kappa 
\,=\,
\left(
\mu^\eps_{t,\bx}
\,+\,
\mu^\eps_{0,\bx}
\right)/2
$. In order to simplify notation, we also define $B^\eps$ as follows
\[
B^\eps(t,\bx,\bu)
\,=\,
N(v)
\,-\,w \,-\,
\mathcal{K}_{\Psi}
[\rho_0^\eps \,\mu^\eps]
\left(
t,\bx,v
\right)
\,-\,
\frac{1}{\eps}\,
        \rho^\eps_0\,
        (v-\cV^\eps)\,,
\]
and we point out that according to Corollary \ref{cor:1}, assumptions \eqref{hyp2:N} on $N$, \eqref{hyp2:psi} \& \eqref{hyp:rho0}, there exists a positive constant $C>0$ that may depend on $\eps$ such that
\[
\left|
B^\eps(t,\bx,\bu)
\right|
\,\leq\,
C
\left(\,
|\bu|^p
\,+\,
1\,
\right),\quad
\forall\, 
\ds
(t,\bx,\bu)
\in
\R_+
\times
K
\times 
\R^2\,.
\]
We compute the derivative of
$
H_{1/2}
\left[\,
\mu^\eps_{0,\bx}
\,|\,
\mu^\eps_{t,\bx}
\,
\right]
$ using equation \eqref{kinetic:eq}. It yields
\begin{align*}
\frac{\dD }{\dD t}
H_{1/2}
\left[
\mu^\eps_{0,\bx}
\,|\,
\mu^\eps_{t,\bx}
\right]
&\,
=\,
\cH_1
\,+\,
\cH_2
\,+\,
\cH_3\,,
\end{align*}
where
\begin{equation*}
\left\{
\begin{array}{l}
    \displaystyle \cH_{1}\,=\,
    -\frac{1}{2}
\int_{\R^2}
\,
\partial^2_v\,
\mu^\eps_{t,\bx}
\,
\frac{\mu^\eps_{0,\bx}}{\kappa}
\,\dD \bu\,,
\\[1.1em]
\displaystyle \cH_{2}\,=\,
    \frac{1}{2}
\int_{\R^2}
\,
\partial_v
\left[
B^\eps\, \mu^\eps_{t,\bx}
\right]
\frac{\mu^\eps_{0,\bx}}{\kappa}
\,\dD \bu\,,
\\[1.1em]
    \displaystyle \cH_{3}\,=\,
    \frac{1}{2}
\int_{\R^2}
\partial_w
\left[
A \,\mu^\eps_{t,\bx}
\right]
\frac{\mu^\eps_{0,\bx}}{\kappa}
\,\dD \bu\,
    .\\[0,8em]
    \end{array}
    \right.
    \end{equation*}
We start with $\cH_1$. First, we integrate by part and rewrite the term as follows
\[
\cH_1
\,=\,
-\,
I_{1/2}
\left[
\mu^\eps_{0,\bx}
\,|\,
\mu^\eps_{t,\bx}
\right]
\,+\,
\int_{\R^2}
\partial_v
\left(
\ln{
\frac{\mu^\eps_{0,\bx}}{\kappa}
}\,
\right)
\left(
1
\,-\,
\frac{1}{2}
\frac{\mu^\eps_{0,\bx}}{\kappa}
\right)
\partial_v
\left(
\ln{
\mu^\eps_{0,\bx}
}\,
\right)
\mu^\eps_{0,\bx}
\,\dD \bu\,.
\]
Second, we apply the inequality
$
\ds
\left|
\mu^\eps_0\,/\,\kappa 
\right|
\,\leq\,
2
$, Young's inequality and assumption \eqref{hyp4:f0} on $\mu^\eps_0$. It~yields for all positive $\eta$
\[
\cH_1
\,\leq\,
-\,
\left(
1-
\frac{\eta}{2}
\right)\,
I_{1/2}
\left[
\mu^\eps_{0,\bx}
\,|\,
\mu^\eps_{t,\bx}
\right]
\,+\,8\,
\frac{m^\eps}{\eta},
\]
We turn to $\cH_2$. After an integration by part, it rewrites as follows
\[
\displaystyle \cH_{2}\,=\,
    -\frac{1}{2}
\int_{\R^2}
\,B^\eps\,
\partial_v
\left(
\ln{
\frac{\mu^\eps_{0,\bx}}{\kappa}
 }
\right)
\frac{\mu^\eps_{t,\bx}}{\kappa}
\,\mu^\eps_{0,\bx}\,
\,\dD \bu\,.
\]
Then we apply the inequality 
$
\ds
\left|
\mu^\eps\,/\,\kappa
\right|
\leq
2
$,
Young's inequality, the bound on $B^\eps$ and the first assumption~in~\eqref{hyp3:f0}. We obtain for some positive constant $C$ and 
for all positive $\eta$
\[
\cH_2
\,\leq\,
\eta\,
I_{1/2}
\left[
\mu^\eps_{0,\bx}
\,|\,
\mu^\eps_{t,\bx}
\right]
\,+\,
\frac{C}{\eta}\,.
\]
To end with, we estimate $\cH_3$. This term is a little trickier to estimate since we do not have dissipation with respect to the adaptation variable. We rewrite 
$\ds
\cH_3\,
=\,
\cH_{31}\,+\,
\cH_{32}$,
where
\begin{equation*}
\left.\right.\\[0,3em]
\left\{
\begin{array}{l}
    \displaystyle \cH_{31}\,=\,
    \int_{\R^2}
\partial_w
\left(
A(\bu)
\right)\,
\psi
\left(
\frac{\mu^\eps_{0,\bx}}{\kappa}
\right)
\frac{\mu^\eps_{0,\bx}}{\kappa}\,
\kappa
\,\dD \bu\,,
\\[1.2em]
\displaystyle \cH_{32}\,=\,
    -\int_{\R^2}
    A(\bu)\,
    \psi\left(
    \frac{\mu^\eps_{0,\bx}}{\kappa}
    \right)\,
    \partial_w
\left(
    \ln{
\mu^\eps_{0,\bx}}
\right)\,
\mu^\eps_{0,\bx}
\,\dD \bu\,,
    \end{array}
    \right.
    \end{equation*}
where 
$\ds\psi(x)\,=\,\ln{(x)}-x/2$.
We notice that 
$\mu^\eps_{0,\bx}/\kappa
$ lies in $[0,\,2]$ and that 
$\ds
\left(x \mapsto x\,\psi(x)
\right)$ is bounded on $[0,\,2]$. Hence, using conservation of mass for equation \eqref{kinetic:eq}, we obtain
\[
\cH_{31}\,\leq\,
    b\sup_{x \in [0,2]}
    \left|
    x\,\psi(x)
    \right|\,.
\]
We turn to $\cH_{32}$. We apply Young's inequality and use assumption \eqref{hyp4:f0} on $\mu^\eps_0$. It yields
\[
\cH_{32}\,=\,
    \frac{1}{2}\int_{\R^2}
    \left|A(\bu)
    \right|^2
    \,
    \left|\,
    \psi\left(
    \frac{\mu^\eps_{0,\bx}}{\kappa}
    \right)
    \right|^2\,
    \frac{\mu^\eps_{0,\bx}}{\kappa}\,
\kappa
\,\dD \bu
\,+\,
2\,m^\eps\,.
\]
We notice that 
$
\ds
\left(
x \mapsto 
x\,
\left|
\psi(x)
\right|^2\right)
$ is bounded on 
$
\ds
[0,2]
$ and that  $\mu^\eps_{0,\bx}/\kappa$ lies in $[0,\,2]$. Furthermore, we apply Proposition \ref{prop:1} and assumption \eqref{hyp1:f0}. It yields
\[
\cH_3\,\leq\,C\,.
\]
Gathering former computations and taking $\eta$ small enough, we obtain 
\begin{align*}
\frac{\dD }{\dD t}
H_{1/2}
\left[
\mu^\eps_{0,\bx}
\,|\,
\mu^\eps_{t,\bx}
\right]
\,+\,
\frac{1}{2}\,
I_{1/2}
\left[
\mu^\eps_{0,\bx}
\,|\,
\mu^\eps_{t,\bx}
\right]
&\,
\leq\,
C\,.
\end{align*}
We integrate the former relation between $0$ and $t$ and apply Lemma \ref{entropy V.S. L1}. Since the constant $C$ does not depend on $\bx$, we take the supremum over $K$. In the end, we obtain
\[
\sup_{\bx \in K}\,
\left\|
\,
\mu^\eps_{t,\bx}
\,-\,
\mu^\eps_{0,\bx}
\,
\right\|_{L^1(\R^2)}
\,\leq\,
C\,\sqrt{t}\,.
\]
\end{proof}
Using Proposition \ref{continuity:time0}, we deduce strong continuity at all times for $\mu^\eps$.
\begin{proposition}[Continuity at time $t>0$]
  \label{continuity:time}
  For any $\varepsilon > 0$, suppose that assumptions \eqref{hyp1:N} and \eqref{hyp1:psi}-\eqref{hyp:rho0} are fulfilled whereas the initial condition $\mu_0^\varepsilon$  verifies the first condition of \eqref{hyp3:f0}, the following estimate holds
\[
\sup_{\bx \in K}
\left\|\,
\mu^\eps_{t\,,\,\bx}
\,-\,
\mu^\eps_{t+h\,,\,\bx}\,
\right\|_{L^1
\left(
\R^2
\right)
}
\,\leq\,
e^{Ct}\,
\sup_{\bx \in K}
\left\|\,
\mu^\eps_{0\,,\,\bx}
\,-\,
\mu^\eps_{h\,,\,\bx}\,
\right\|_{L^1
\left(
\R^2
\right)
}^{1/2},\,\,\,
\forall\,
(t\,,\,h) \in 
\left(
\R_+
\right)^2\,,
\]
for some positive constant $C$ which may depend on $\eps$.
\end{proposition}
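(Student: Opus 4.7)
The approach is to view $\nu^\eps_{t,\bx}:=\mu^\eps_{t+h,\bx}$ as a second solution of \eqref{kinetic:eq} with initial datum $\mu^\eps_{h,\bx}$, which is legitimate because $\rho_0^\eps$ is time-homogeneous so the equation is autonomous. I then mimic the modified relative-entropy computation of Proposition \ref{continuity:space}, the only difference being that now the two solutions share the same $\bx$ but come from different initial data. Setting $\kappa_{t,\bx}:=(\mu^\eps_{t,\bx}+\nu^\eps_{t,\bx})/2$ and differentiating $H_{1/2}[\mu^\eps_{t,\bx}\,|\,\nu^\eps_{t,\bx}]$ along the two flows, the autonomous drift $(N(v)-w,A(\bu))$ together with the $v$-linear pieces of the stiff and nonlocal drifts $\rho_0^\eps(\bx)\,v/\eps$ and $\Psi*_r\rho_0^\eps(\bx)\,v$ cancel exactly by integration by parts, and the diffusion produces the dissipative term $-I_{1/2}[\mu^\eps_{t,\bx}\,|\,\nu^\eps_{t,\bx}]$. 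Since $\bx$ is common to both solutions, the spatial-coefficient residues that produced the $\beta(\bx,\by)$ contribution in Proposition \ref{continuity:space} disappear, and the only remainder comes from the $v$-independent difference
\[
\mathcal{N}_\mu(t,\bx) - \mathcal{N}_\nu(t,\bx) \,=\, \frac{\rho_0^\eps(\bx)}{\eps}\bigl(\mathcal{V}^\eps(t+h,\bx) - \mathcal{V}^\eps(t,\bx)\bigr) \,+\, \Psi*_r\bigl(\rho_0^\eps\,(\mathcal{V}^\eps(t+h,\cdot)-\mathcal{V}^\eps(t,\cdot))\bigr)(\bx).
\]

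To control this remainder I proceed as in the proof of Proposition \ref{continuity:space}. By Jensen's inequality and the definition of $\mathcal{V}^\eps$, one has $|\mathcal{V}^\eps(t,\by)-\mathcal{V}^\eps(t+h,\by)|\leq 2\,W_1(\mu^\eps_{t,\by},\nu^\eps_{t,\by})$, and Proposition \ref{exp moments} combined with Corollary~2.4 of \cite{Bolley/Villani} gives $W_1(\mu^\eps_{t,\by},\nu^\eps_{t,\by})\leq C\,H_{1/2}[\mu^\eps_{t,\by}\,|\,\nu^\eps_{t,\by}]^{1/2}$ uniformly in $\by\in K$. Using the pointwise bound $\nu^\eps_{t,\bx}/\kappa_{t,\bx}\leq 2$ inside Cauchy--Schwarz, the entropy production from the remainder is estimated by
\[
C\,\bigl(\mathcal{E}_\bx(t)+\sup_{\by\in K}\mathcal{E}_\by(t)\bigr)\,\sqrt{I_{1/2}[\mu^\eps_{t,\bx}\,|\,\nu^\eps_{t,\bx}]},\qquad \mathcal{E}_\by(t):=H_{1/2}[\mu^\eps_{t,\by}\,|\,\nu^\eps_{t,\by}]^{1/2},
\]
with $C$ depending on $\eps$, $m_*$ and $\Psi$ via assumptions \eqref{hyp:rho0} and \eqref{hyp1:psi}. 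Young's inequality (with parameter $\eta=1/2$) absorbs half of the Fisher information and leaves
\[
\frac{\dD}{\dD t}\,H_{1/2}[\mu^\eps_{t,\bx}\,|\,\nu^\eps_{t,\bx}] \,+\, \tfrac12\,I_{1/2}[\mu^\eps_{t,\bx}\,|\,\nu^\eps_{t,\bx}] \,\leq\, C\,H_{1/2}[\mu^\eps_{t,\bx}\,|\,\nu^\eps_{t,\bx}] \,+\, C\,\sup_{\by\in K}H_{1/2}[\mu^\eps_{t,\by}\,|\,\nu^\eps_{t,\by}].
\]

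To close the estimate I set $\Phi(t):=\sup_{\bx\in K}H_{1/2}[\mu^\eps_{t,\bx}\,|\,\nu^\eps_{t,\bx}]$; integrating the previous inequality in time and then taking the supremum over $\bx$ yields $\Phi(t)\leq\Phi(0)+2C\int_0^t\Phi(s)\,\dD s$, hence $\Phi(t)\leq\Phi(0)\,e^{2Ct}$ by Gronwall's lemma. Applying Lemma \ref{entropy V.S. L1} at time $t$ to convert the entropy into an $L^1$ bound, and at time $0$ in the reverse direction to control $\Phi(0)$ by $\sup_\bx\|\mu^\eps_{0,\bx}-\mu^\eps_{h,\bx}\|_{L^1}$, and finally taking a square root, produces the claimed inequality with the exponent $1/2$ on the initial $L^1$ norm. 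The main technical obstacle, as in Proposition \ref{continuity:space}, is the careful bookkeeping that makes the linear transport terms cancel and allows the cross term to be converted into absorbable Fisher information via the $\nu^\eps/\kappa\leq 2$ bound; the coupling between $\bx$ and the rest of $K$ through the convolution $\Psi*_r$ is then handled cleanly by the passage to the supremum $\Phi$.
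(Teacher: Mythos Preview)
Your proposal is correct and follows essentially the same approach as the paper: both treat $\mu^\eps_{t+h,\bx}$ as a time-shifted solution, differentiate the modified relative entropy $H_{1/2}$, observe that the linear transport terms cancel and the diffusion produces $-I_{1/2}$, control the nonlinear remainder $\mathcal{N}_t-\mathcal{N}_{t+h}$ via the Bolley--Villani inequality (after Young's inequality with the $\mu^\eps/\kappa\leq 2$ bound), take the supremum over $\bx$ to close a Gronwall inequality, and convert back to $L^1$ using Lemma~\ref{entropy V.S. L1}. The only cosmetic difference is that the paper writes the differential inequality with a single $\sup_{\bx}H_{1/2}$ on the right, whereas you keep two terms before passing to the supremum.
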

\begin{proof}
All along this proof, we consider some $\bx \in K$ and $h>0$. We introduce the following notation
$
\ds
\kappa
\,=\,
\left(
\mu^\eps_{t\,,\,\bx}
\,+\,
\mu^\eps_{t+h\,,\,\bx}
\right)/2$, which satisfies the following equation
\begin{align*}
\partial_t\, \kappa
        \,=\,&-\,
        \partial_v 
        \left( \,
        \left(N(v)
         -w
        - \left(\frac{\rho_0^\eps}{\eps}
         +
         \Psi*_r\rho_0^\eps
         \right)v
         \right)\,
         \kappa
         \,\right) 
         \,-\,
         \partial_w
        \left( \,
       A(\bu)\,  \kappa
       \,\right) 
        \,+\,
        \partial_v^2\, 
        \kappa\\[0,8em]
        &\,-\,
        \frac{1}{2}
        \partial_v
        \left(\,
        \mathcal{N}_{
        t}\,\mu^\eps_{t,\bx}
        \,+\,
        \mathcal{N}_{
        t+h}\,\mu^\eps_{t+h,\bx}
        \right)\,,
\end{align*}
where $\mathcal{N}$ gathers the non-linear terms with respect to the time variable and is given by
\[
\mathcal{N}_{t}(\bx)
        =
\frac{\rho^\eps_0(\bx)}{\eps}\, \cV^\eps(t,\,\bx)
\,+\,
\Psi*_r
\left(\rho_0^\eps
\mathcal{V}^\eps
\right)(t,\,\bx)\,.
\]
We compute the derivative of 
$\ds
H_{1/2}
\left[\,
\mu^\eps_{t,\,\bx}\,|\,
\mu^\eps_{t+h,\,\bx}\,
\right]
$
using the former equation, equation \eqref{kinetic:eq} and conservation of mass for \eqref{kinetic:eq}. After an integration by part, all the terms associated to linear transport cancel and we obtain
\[
\frac{\dD}{\dD t}
H_{1/2}
\left[\,
\mu^\eps_{t,\,\bx}\,|\,
\mu^\eps_{t+h,\,\bx}\,
\right]
\,+\,
I_{1/2}
\left[\,
\mu^\eps_{t,\,\bx}\,|\,
\mu^\eps_{t+h,\,\bx}\,
\right]
\,=\,
\cH_1\,,
\]
where $\cH_1$ is given by
\[
\cH_1
\,=\,
\left(
\mathcal{N}_{t}
\,-\,
\mathcal{N}_{t\,+\,h}
\right)\,
\int_{\R^2}
\partial_v
\left(
\ln{
\frac{\mu^\eps_{t\,,\,\bx}}{\kappa}
}\,
\right)
\frac{\mu^\eps_{t\,,\,\bx}
\,
\mu^\eps_{t+h\,,\,\bx}
}{\kappa}\,
\dD \bu\,.
\]
We use Young's inequality, assumptions \eqref{hyp2:psi} \& \eqref{hyp:rho0} and the inequality 
$
\ds
\left|\,\mu^\eps\,/\,
\kappa \,\right|
\,\leq\,
2
$. It yields
\[
\left|
\cH_1
\right|
\,\leq\,
\eta\,
I_{1/2}
\left[\,
\mu^\eps_{t,\,\bx}\,|\,
\mu^\eps_{t+h,\,\bx}\,
\right]
\,+\,
C\eta^{-1}\,
\sup_{\bx \in K}
\left|
\cV(t,\bx)
\,-\,
\cV(t+h,\bx)
\right|^2\,,
\]
for all positive $\eta$ and some constant $C$ that may depend on $\eps$.
Then from Proposition \ref{exp moments}, we get exponentional moments on $\kappa$ and  apply Corollary $2.4$ in \cite{Bolley/Villani}, which yields
\[
\sup_{\bx \in K}
\left|
\cV(t,\bx)
\,-\,
\cV(t+h,\bx)
\right|^2
\,\leq\,
C\,
\sup_{\bx \in K}
H_{1/2}
\left[\,
\mu^\eps_{t,\,\bx}\,|\,
\mu^\eps_{t+h,\,\bx}\,
\right]\,,
\]
for some constant $C\,>\,0$ which may depend on $\eps$. Gathering the former computations and taking $\eta\,=\,1/2$, it yields
\[
\frac{\dD}{\dD t}
H_{1/2}
\left[\,
\mu^\eps_{t,\,\bx}\,|\,
\mu^\eps_{t+h,\,\bx}\,
\right]
\,+\,
\frac{1}{2}\,
I_{1/2}
\left[\,
\mu^\eps_{t,\,\bx}\,|\,
\mu^\eps_{t+h,\,\bx}\,
\right]
\,\leq\,
C\,
\sup_{\bx \in K}
H_{1/2}
\left[\,
\mu^\eps_{t,\,\bx}\,|\,
\mu^\eps_{t+h,\,\bx}\,
\right]\,.
\]
We integrate this relation between $0$ and $t$ and take the supremum over all $\bx$ in $K$. It yields
\[
\sup_{\bx \in K}
H_{1/2}
\left[\,
\mu^\eps_{t\,,\,\bx}\,|\,
\mu^\eps_{t+h\,,\,\bx}\,
\right]
\,\leq\,
\sup_{\bx \in K}
H_{1/2}
\left[\,
\mu^\eps_{0\,,\,\bx}\,|\,
\mu^\eps_{h\,,\,\bx}\,
\right]
\,+\,
C\,
\int_0^t\,
\sup_{\bx \in K}
H_{1/2}
\left[\,
\mu^\eps_{s\,,\,\bx}\,|\,
\mu^\eps_{s+h\,,\,\bx}\,
\right]\dD s\,.
\]
We apply Gronwall's lemma to the former inequality and use Lemma \ref{entropy V.S. L1}. We obtain
\[
\sup_{\bx \in K}
\left\|
\mu^\eps_{t\,,\,\bx}
\,-\,
\mu^\eps_{t+h\,,\,\bx}
\right\|_{L^1
\left(
\R^2
\right)
}
\,\leq\,
C
e^{Ct}\,
\sup_{\bx \in K}
\left\|
\mu^\eps_{0\,,\,\bx}
\,-\,
\mu^\eps_{h\,,\,\bx}
\right\|_{L^1
\left(
\R^2
\right)
}^{1/2}\,.
\]
\end{proof}
\subsection{Uniqueness}
\label{uniqueness}
We turn to the proof of uniqueness for equation \eqref{kinetic:eq}. We follow the same method as in the proof of Proposition \ref{continuity:time}. We consider two solutions $\ds\mu^1$ and $\ds\mu^2$ to equation \eqref{kinetic:eq} in the sense of Definition \ref{notion de solution} and with the same initial condition $\ds\mu^\eps_{0}$. Then we take some $T>0$ and prove that the following relative entropy
\[
H_{1/2}
\left[\,
\mu^1_{t,\,\bx}\,|\,
\,\mu^2_{t,\bx}
\,
\right],
\]
is zero for all $(t,\bx) \in [0\,,\,T] \times K$. All along this proof, we take some $\bx \in K$ and omit the dependence with respect to $\bx$ and $t$ when the context is clear. Furthermore we write
\[
\left(
\mathcal{V}^1\,,\,
\mathcal{V}^2
\right)
\,=\,
\left(
\int_{\R^2}\,v\,\mu^1\,\dD \bu\,,\,
\int_{\R^2}\,v\,\mu^2\,\dD \bu
\right),
\]
and 
$\ds 
\kappa
\,=\,
\left(\mu^1\,+\,\mu^2
\right)/2
$. Since $\mu^1$ and $\mu^2$ are solution to \eqref{kinetic:eq}, $\kappa$ solves the following equation
\[
\partial_t \kappa
\,+\,
\partial_v 
\left( 
\left(N(v)
-w
-
\left(
\frac{\rho_0^\eps}{\eps}
+
\Psi *_r \rho_0^\eps
\right)
\right)
\kappa
\right) 
\,+\,
\partial_w
\left( 
A(\bu)\, \kappa
\,\right) 
\,-\,
\partial_v^2\, 
\kappa
\,+\,
\frac{1}{2}
\partial_v
\left(\,
\mathcal{N}^1\,\mu^1
+
\mathcal{N}^2\,\mu^2
\right)
=0\,,
\]
where $\mathcal{N}^{i}$, for
$
i \in \{1\,,\,2\}
$
,
 gathers the non-linear terms and is given by
\[
\mathcal{N}^i(t,\,\bx)
=
\frac{\rho^\eps_0(\bx)}{\eps}
\cV^i(t,\,\bx)
\,+\,
\Psi*_r
\left(\rho_0^\eps
\mathcal{V}^i
\right)(t,\,\bx)\,.
\]
We compute the derivative of 
$
H_{1/2}
\left[\,
\mu^1\,|\,
\mu^2\,
\right]
$
using the former equation, equation \eqref{kinetic:eq} and conservation of mass for \eqref{kinetic:eq}. After an integration by part, all the terms associated to linear transport cancel and we obtain
\[
\frac{\dD}{\dD t}
H_{1/2}
\left[\,
\mu^1\,|\,
\mu^2\,
\right]
\,+\,
I_{1/2}
\left[\,
\mu^1\,|\,
\mu^2\,
\right]
\,=\,
\cH_1\,,
\]
where $\cH_1$ is given by
\[
\cH_1
\,=\,
\frac{
\mathcal{N}^1
\,-\,
\mathcal{N}^2
}{2}
\,
\int_{\R^2}
\,
\partial_v
\left(
\ln{
\left(
\frac{\mu^1}{\kappa}
\right)
}
\right)\,
\frac{\mu^2}{\kappa}\,
\mu^1\,
\dD \bu\,.
\]
According to Theorem \ref{WP mean field eq}, $\mu^1$ and $\mu^2$ both have uniformly bounded exponential moments on~$[0,T]$, hence applying Corollary $2.4$ in \cite{Bolley/Villani}, we obtain that
\[
\left|
\mathcal{V}^1
\,-\,
\mathcal{V}^2
\right|
\,\leq\, W_1
\left(\,\mu^1\,,\,\mu^2\,
\right)\,\leq\,
C\,
H_{1/2}
\left[\,
\mu^1\,|\,
\mu^2\,
\right]^{1/2}\,,
\]
for some positive constant $C>0$ that may depend on $T$.
Consequently, we use Young's inequality, the estimate 
$
\ds
\left|
\mu^2/
\kappa
\right|\,\leq\, 2
$,
assumptions \eqref{hyp2:psi} \& \eqref{hyp:rho0} and we obtain
\[
\frac{\dD}{\dD t}
H_{1/2}
\left[\,
\mu^1_{t\,,\,\bx}\,|\,
\mu^2_{t\,,\,\bx}\,
\right]
\,+\,
\frac{1}{2}\,
I_{1/2}
\left[\,
\mu^1_{t\,,\,\bx}\,|\,
\mu^2_{t\,,\,\bx}\,
\right]
\,\leq\,
C\,
\sup_{\bx \in K}
\left(
H_{1/2}
\left[\,
\mu^1_{t\,,\,\bx}\,|\,
\mu^2_{t\,,\,\bx}\,
\right]
\right)\,.
\]
We integrate the former relation between $0$ and $t$ and take the supremum over all $\bx$ in $K$ in the left-hand side. It yields
\[
\sup_{\bx \in K}
\left(
H_{1/2}
\left[\,
\mu^1_{t\,,\,\bx}\,|\,
\mu^2_{t\,,\,\bx}\,
\right]
\right)
\,\leq\,
C\,
\int_0^t
\sup_{\bx \in K}
\left(
H_{1/2}
\left[\,
\mu^1_{s\,,\,\bx}\,|\,
\mu^2_{s\,,\,\bx}\,
\right]
\right)\,\dD s\,.
\]
We apply Gronwall's lemma and conclude the proof.
\subsection{Existence}
\label{section:existence}
In this section, we outline the main ideas in order to construct the solution to equation \eqref{kinetic:eq} given by Theorem \ref{WP mean field eq}. Let us first point out that the continuity of the macroscopic quantities 
$\mathcal{V}^\eps$ and  $\mathcal{W}^\eps$
may be deduced from the continuity and the exponential moments of the solution $\mu^\eps$. Indeed, according to \cite{Bolley/Villani} (see Corollary $2.4$), we have
\[
\left(1-\alpha\right)
\left|
\mathcal{V}^\eps(t,\,\bx)
\,-\,
\mathcal{V}^\eps(s,\,\by)
\right|
\,\leq\,
C
\,
H_{1/2}
\left[
\mu^\eps_{t,\bx},\,
\mu^\eps_{s,\by}
\right]^{{1}/{2}}\,,
\]
as soon as $\mu^\eps_{t,\bx}$ and $
\mu^\eps_{s,\by}$ both have exponential moments. Then we apply Lemma \ref{entropy V.S. L1} and deduce continuity.\\
Hence, it is sufficient to prove that $\mu^\eps$ exists in order to complete the proof. For that matter, we consider the following regularized equation 
\begin{equation}\label{regularized eq}
  \ds\partial_t \, \mu^R
        \,+\,
        \partial_v 
        \left( 
        \left(N^R(v)
         -w
        -\frac{\rho^\eps_0}{\eps} (v-\cV^R)
        -\mathcal{K}_{\Psi}
        \left[\rho_0^\eps \,\mu^R
        \right]\right) \,\mu^R\right) 
        \,+\,
        \partial_w
        \left( 
       A(\bu) \, \mu^R\right) 
        \,-\,
        \partial_v^2\,
        \mu^R\,=\,0\,,
\end{equation}
where
\[
\cV^R(t,\bx) 
        \,=\,
        \ds\int_{\R^2}v~\mu^R_{t\,,\,\bx}(\bu)\,\dD\bu\,,
\]
and where 
$
\ds
\left(
N^R
\right)_{R>0}
$ is a suitable sequence of {globally Lipschitz functions}. We construct solutions to \eqref{regularized eq} with an iterative scheme. In order to prove that the scheme converges, we use the exact same method as in the proof for uniqueness (see Section \ref{uniqueness}).\\
Then we let the truncation parameter $R$ grow to infinity. Making use of the continuity estimates in Section \ref{estimates wp}, we check that Ascoli theorem applies here and prove that the sequence 
$\left(\mu^R\right)_{R\,>\,0}$ is relatively compact. Furthermore, we prove that the limit of any subsequence of $\left(
\mu^R\right)_{R\,>\,0}$ which converges in 
$
\scC^0
\left(
[0,T]\times K\,,\,
L^1
\left(
\R^2
\right)
\right)
$ and which has uniformly bounded exponential moments is a solution to \eqref{kinetic:eq}.


\section{Proof of Proposition \ref{WP combined problem}}\label{proof wp combined pb}
We provide an entropy estimate in order to ensure existence and a uniqueness estimate in order to ensure that
\[
\pi^\eps(t,\cdot)
\in \Pi
\left(
\nu^\eps_{t,\bx},\, 
\mathcal{M}_{\rho_0^\eps}\otimes \bar{\nu}_{t,\bx}
\right),\quad
\forall\, t \in \R^+\,.
\]
We start with the uniqueness result.

\begin{lemma}\label{uniqueness combined pb}
Under the assumptions of Theorem \ref{WP mean field eq},
consider $\bx \in K$, $\eps>0$ and a solution $\pi^\eps$ to equation \eqref{combined problem} in the sense of Definition \ref{notion de solution combined problem} and with initial condition 
$\pi^\eps_0$ lying in 
$\Pi
\left(
\nu^\eps_{0,\bx},\, 
\mathcal{M}_{\rho_0^\eps}\otimes \bar{\nu}_{0,\bx}
\right)
$. Then we have
\[
\pi^\eps(t,\cdot)
\in \Pi
\left(
\nu^\eps_{t,\bx}, \,
\mathcal{M}_{\rho_0^\eps}\otimes \bar{\nu}_{t,\bx}
\right),\quad
\forall\, t \in \R^+\,.
\]
\end{lemma}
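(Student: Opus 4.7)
The plan is to show that the two marginals of $\pi^\eps$ solve linear PDEs whose unique solutions can be identified with $\nu^\eps_{t,\bx}$ and $\mathcal{M}_{\rho_0^\eps}\otimes\bar{\nu}_{t,\bx}$, respectively. Starting from Definition \ref{notion de solution combined problem}, the first step is to test the weak formulation of \eqref{combined problem} against functions of the form $\varphi(\bu,\bu')=\phi(\bu)$ and $\varphi(\bu,\bu')=\phi(\bu')$ separately. For the first choice, all derivatives in $\bu'$ vanish, and integration produces the weak form of equation \eqref{nu:eq} for the marginal $\pi^\eps_1(t,\cdot):=\int_{\R^2}\pi^\eps(t,\cdot,\bu')\,\dD\bu'$. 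For the second choice, the drift $\mathbf{b}^\eps_0$ depends only on $\bu$, so its contribution disappears after integration in $\bu$; moreover the mixed second derivatives in $D\cdot\nabla_{v,v'}$ and the pure $\partial_v^2$ term disappear as boundary terms, and one is left with the linear Fokker--Planck equation
\begin{equation*}
\partial_t \pi^\eps_2+\partial_{w'}\!\left[A_0(0,w')\pi^\eps_2\right]\,=\,\frac{1}{\eps}\partial_{v'}\!\left(\rho_0^\eps\, v'\,\pi^\eps_2+\partial_{v'}\pi^\eps_2\right),
\end{equation*}
for $\pi^\eps_2(t,\cdot):=\int_{\R^2}\pi^\eps(t,\bu,\cdot)\,\dD\bu$.

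The second step is to identify the solutions of the two marginal problems. On the one hand, $\nu^\eps_{t,\bx}$ obtained from $\mu^\eps$ via the change of variable \eqref{change:var} solves \eqref{nu:eq} in the weak sense. On the other hand, $\mathcal{M}_{\rho_0^\eps}\otimes\bar{\nu}_{t,\bx}$ solves the marginal Fokker--Planck equation by separation of variables: $\mathcal{M}_{\rho_0^\eps}$ is the stationary density of the Ornstein--Uhlenbeck operator $\partial_{v'}(\rho_0^\eps\, v'\,\cdot+\partial_{v'}\cdot)$, while $\bar{\nu}_{t,\bx}$ solves the pure transport in $w'$ given by Theorem \ref{wp macro eq}. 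The initial data for $\pi^\eps_1$ and $\pi^\eps_2$ coincide by hypothesis with $\nu^\eps_{0,\bx}$ and $\mathcal{M}_{\rho_0^\eps}\otimes\bar{\nu}_{0,\bx}$.

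The final step is to invoke uniqueness for each linear marginal problem. For $\pi^\eps_1$, uniqueness for \eqref{nu:eq} is equivalent, via the smooth change of variables \eqref{change:var}, to uniqueness for \eqref{kinetic:eq}, already furnished by Theorem \ref{WP mean field eq}. For $\pi^\eps_2$, we face a linear Fokker--Planck equation with a bounded affine drift in $w'$ and an Ornstein--Uhlenbeck structure in $v'$; uniqueness follows from a relative-entropy argument parallel to the one developed in Subsection \ref{uniqueness}, using the Fisher information associated with the $\partial_{v'}^2$ dissipation to close a Gronwall estimate for $H_{1/2}[\pi^\eps_2\,|\,\mathcal{M}_{\rho_0^\eps}\otimes\bar{\nu}_{t,\bx}]$.

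The main technical hurdle will be the uniqueness estimate for $\pi^\eps_2$, since one must work within the $L\log L$ functional class prescribed by Definition \ref{notion de solution combined problem} and control the interaction between the $w'$-transport and the $v'$-diffusion. The modified relative entropy $H_{1/2}$ introduced in \eqref{def:Halpha} is again the right tool: the Fisher-information dissipation absorbs the contribution of the affine $A_0(0,w')$ drift after a Young inequality, and the absence of nonlocal feedback in the marginal equation makes the Gronwall closure immediate. Once both marginal identifications are established, we conclude $\pi^\eps(t,\cdot)\in\Pi(\nu^\eps_{t,\bx},\mathcal{M}_{\rho_0^\eps}\otimes\bar{\nu}_{t,\bx})$ for all $t\geq 0$, as claimed.
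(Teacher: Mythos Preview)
Your overall strategy---derive the two marginal equations, identify the target solutions, and invoke uniqueness---matches the paper. The difference, and the weak points, lie in how you carry out the uniqueness step.

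The paper bypasses both the appeal to Theorem~\ref{WP mean field eq} and the relative-entropy machinery. Since $\mathbf{b}^\eps_0$ in \eqref{combined problem} is already fixed (it is computed from the given $\mu^\eps$, not from $\pi^\eps$), the equation satisfied by each marginal is \emph{linear}. The paper therefore writes down the linear equation for the difference $\pi^1-\nu^\eps_{t,\bx}$, multiplies by $\sign(\pi^1-\nu^\eps_{t,\bx})$, and concludes $\frac{\dD}{\dD t}\|\pi^1-\nu^\eps_{t,\bx}\|_{L^1(\R^2)}\leq 0$ directly; the same argument handles $\pi^2$. This is a one-line $L^1$ contraction.

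Your route for $\pi^\eps_1$ has a gap: Theorem~\ref{WP mean field eq} yields uniqueness for the \emph{nonlinear} equation \eqref{kinetic:eq}, where $\cV^\eps$ is the first moment of the unknown. Here $\mathbf{b}^\eps_0$ carries the moment of the already-constructed $\mu^\eps$, so the preimage of $\pi^\eps_1$ under \eqref{change:var} solves a linear equation with a \emph{prescribed} $\cV^\eps$, not \eqref{kinetic:eq} unless you already know $\pi^\eps_1=\nu^\eps_{t,\bx}$. What you actually need is uniqueness for the linear problem, which is simpler and follows either from the paper's $L^1$ argument or from a stripped-down version of the relative-entropy computation (the nonlocal feedback, the only source of difficulty in Section~\ref{uniqueness}, is absent here). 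For $\pi^\eps_2$ your relative-entropy approach would work, but your description of the mechanism is off: in a purely linear equation the transport terms cancel \emph{identically} when computing $\frac{\dD}{\dD t}H_{1/2}$, just as in the proofs of Propositions~\ref{continuity:space} and~\ref{continuity:time}. The Fisher information does not need to absorb the $A_0(0,w')$ contribution via Young's inequality; that contribution is zero.
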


\begin{proof}
We define $\pi^1$ (resp. $\pi^2$) the marginal of $\pi^\eps$ with respect to $\bu$ (resp. $\bu'$) and we drop the dependence with respect to time and space when the context is clear.
We integrate equation \eqref{combined problem} with respect to $\bu'$. We obtain that
the difference between $\pi^1$ and $\nu^\eps_{t,\,\bx}$ solves
\begin{equation*}
\partial_t\, 
\left(\pi^1
-
\nu^\eps
\right)
\,+\,
\text{div}_{\bu}
\left[
\mathbf{b}^\eps_0
\,
\left(\pi^1
-
\nu^\eps
\right)
\right]
\,=\,
\frac{1}{\eps}\,
\partial_v
\left[\,\rho_0^\eps
\,v\, 
\left(\pi^1
-
\nu^\eps
\right)
+
\partial_v \,
\left(\pi^1
-
\nu^\eps
\right)
\right]\,.
\end{equation*}
Multiplying the former equation by 
$
\mathrm{sign}
\left(
\pi^1
-
\nu^\eps
\right)
$, we obtain that
$
\left|
\pi^1
-
\nu^\eps
\right|
$
is a sub-solution to the former equation. Then we integrate with respect to $\bu$ and obtain
\[
\frac{\dD}{\dD t}
\left\|
\pi^1
\,-\,
\nu^\eps
\right\|_{L^1\left(\R^2\right)}
\,\leq\,
0\,.
\]
We follow the method same for 
$
\ds
\pi^2
$
and obtain the expected result.
\end{proof}

We end this section with an entropy estimate, which ensures existence for solutions to \eqref{combined problem} in the sense of Definition \ref{notion de solution combined problem}. We define the Fisher information associated to \eqref{combined problem}
\[
I_{v,v'}
\left[\,
\pi\,
\right]
\,=\,
\int_{\R^4}
\left|\,
\left(\,
\partial_v
\,+\,
\partial_{v'}\,
\right)
\ln{\pi}\,
\right|^2\,
\pi\,\dD\bu\,\dD \bu'\,.
\]
\begin{lemma}
Under the assumptions of Theorem \ref{WP mean field eq},
consider $\bx \in K$, $\eps>0$ and
a solution $\pi^\eps$ to equation \eqref{combined problem} with some initial condition $\pi^\eps_0$ lying in 
$
\ds
\Pi
\left(
\nu^\eps_{0,\bx}, 
\mathcal{M}_{\rho_0^\eps}\otimes \bar{\nu}_{0,\bx}
\right)
$, there exists a positive constant $C>0$ which may depend on $\eps$, $m_*$, $m_p$ and $\ols{m}_p$ such that
\[
H
\left[\,
\pi^\eps_t\,
\right]
\,\leq\,
H
\left[\,
\pi^\eps_0\,
\right]
\,+\,
C\,t\,.
\]
\end{lemma}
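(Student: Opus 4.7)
The plan is a standard entropy-dissipation argument. I formally test equation \eqref{combined problem} against $1+\ln \pi^\eps$ and integrate over $\R^4$; exploiting conservation of mass and integrating by parts, the time derivative of $H[\pi^\eps_t]$ splits as
\begin{align*}
\frac{\dD}{\dD t}H[\pi^\eps_t] \,=\, -\int_{\R^4}\bigl[\mathrm{div}_{\bu}\,\mathbf{b}^\eps_0 + \partial_{w'}A_0(0,w')\bigr]\pi^\eps\,\dD\bu\,\dD\bu'\,+\,\frac{2\rho_0^\eps(\bx)}{\eps}\,-\,\frac{1}{\eps}I_{v,v'}[\pi^\eps_t]\,.
\end{align*}
The production term $2\rho_0^\eps/\eps$ comes from the stiff drift $\rho_0^\eps(v,v')^\top$ after one further integration by parts (using $\int(v\partial_v + v'\partial_{v'})\pi^\eps\,\dD\bu\,\dD\bu' = -2$); the key point is that the choice $\beta=1$ makes $D$ rank one, so $(D\nabla_{v,v'}\pi^\eps)\cdot\nabla_{v,v'}\pi^\eps = (\partial_v\pi^\eps + \partial_{v'}\pi^\eps)^2 \geq 0$, and the Fisher term $-I_{v,v'}[\pi^\eps_t]/\eps$ has a favorable sign and can be discarded.

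The transport contribution rewrites as $-\int_{\R^4}\bigl[N'(\cV^\eps + \sqrt{\eps}\,v)-\Psi*_r\rho_0^\eps(\bx)-2b\bigr]\pi^\eps\,\dD\bu\,\dD\bu'$, using $\mathrm{div}_{\bu}\mathbf{b}^\eps_0 = N'(\cV^\eps+\sqrt\eps\,v)-\Psi*_r\rho_0^\eps - b$ and $\partial_{w'}A_0(0,w')=-b$. The convolution and $b$ contributions are controlled by assumptions \eqref{hyp2:psi} and \eqref{hyp:rho0}. For the nonlinearity, the $\scC^2$-smoothness of $N$ combined with \eqref{hyp2:N} yields $|N'(\cV^\eps + \sqrt\eps\,v)|\leq C(1+|v|^{p-1})$ after invoking Corollary \ref{cor:1} to absorb $\cV^\eps$. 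Since the $\bu$-marginal of $\pi^\eps$ is $\nu^\eps_{t,\bx}$, Remark \ref{moment nu eps} together with Proposition \ref{prop:2} gives a time-uniform (but $\eps$-dependent) bound on $\int_{\R^2} |v|^{p-1}\nu^\eps_{t,\bx}(\dD\bu)$. Assembling the pieces produces $\frac{\dD}{\dD t}H[\pi^\eps_t]\leq C$ with $C=C(\eps, m_*, m_p, \ols{m}_p)$, and integration in time concludes.

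The main obstacle is making the formal testing against $1+\ln\pi^\eps$ rigorous: for a merely distributional solution, neither $\pi^\eps\ln\pi^\eps$ nor the Fisher term $I_{v,v'}[\pi^\eps_t]$ need be well-defined a priori, and the integrations by parts are not automatically licit. The standard remedy, which I would run in parallel with the construction of the solution, is to regularize the initial datum to a smooth strictly positive density with Gaussian tails and add a small isotropic viscosity in $(v,v')$; the computation above is then classical on the approximations, and one passes to the limit using weak lower semi-continuity of $H$ and of $I_{v,v'}$, together with the uniform entropy bound itself to extract an $L\log L$-compact limit which solves \eqref{combined problem} in the sense of Definition \ref{notion de solution combined problem}. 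Uniqueness of this limit among solutions with a marginal in $\Pi(\nu^\eps_{t,\bx},\mathcal{M}_{\rho_0^\eps}\otimes\bar\nu_{t,\bx})$ is then a direct consequence of Lemma \ref{uniqueness combined pb}.
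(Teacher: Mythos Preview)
Your overall structure---test against $1+\ln\pi^\eps$, identify the Fisher dissipation, bound the remaining production terms, integrate in time---is correct and matches the paper. The difference lies in how you treat the nonlinear term $N$. You integrate by parts one extra time to turn the $N$-contribution into $-\int N'(\cV^\eps+\sqrt\eps\,v)\,\pi^\eps$, which you then bound pointwise via $|N'(\cV^\eps+\sqrt\eps\,v)|\leq C(1+|v|^{p-1})$. This bound, however, does \emph{not} follow from the paper's hypotheses: assumption \eqref{hyp2:N} controls only $|N(v)|\leq C|v|^p$ at infinity, and $N\in\scC^2$ gives no growth control on $N'$ (one can cook up $N$ satisfying \eqref{hyp1:N}--\eqref{hyp2:N} whose derivative grows arbitrarily fast). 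So the step ``$\scC^2$-smoothness combined with \eqref{hyp2:N} yields $|N'|\leq C(1+|v|^{p-1})$'' is a genuine gap.

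The paper sidesteps this precisely because the Fisher term $I_{v,v'}$ is degenerate (it only controls $\partial_v+\partial_{v'}$, not $\partial_v$ alone). Rather than integrating by parts a second time, the paper keeps the $N$-term in the form
\[
\cA=\frac{1}{\sqrt\eps}\int_{\R^4}N(\cV^\eps+\sqrt\eps\,v)\,\partial_v\!\bigl(\ln\pi^\eps_t\bigr)\,\pi^\eps_t\,\dD\bu\,\dD\bu',
\]
observes that since $N(\cV^\eps+\sqrt\eps\,v)$ is independent of $v'$ one may freely add $\partial_{v'}(\ln\pi^\eps_t)$ inside (the extra piece integrates to zero), and then applies Young's inequality against $I_{v,v'}$. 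This leaves only $\int|N(\cV^\eps+\sqrt\eps\,v)|^2\nu^\eps_{t,\bx}\,\dD\bu$ to control, which \emph{is} available from \eqref{hyp2:N}, Corollary \ref{cor:1} and Proposition \ref{prop:2}. Your approach would be the more elementary one had a growth bound on $N'$ been assumed; under the paper's hypotheses it fails, and the degenerate-Fisher trick is needed.
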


\begin{proof}
We integrate equation \eqref{combined problem} with respect to $\bu$ and $\bu'$ and deduce that mass is conserved through time. Hence,
we compute the derivative of 
$
\ds
H
\left[\,
\pi^\eps_t\,
\right]
$
multiplying equation \eqref{combined problem}
by 
$\ds
\ln{
\left(
\pi^\eps_t
\right)}$. After an integration by part, it yields
\[
\frac{\dD}{\dD t}
H
\left[\,
\pi^\eps_t\,
\right]
\,+\,
\frac{1}{\eps}\,
I_{v,v'}
\left[\,
\pi^\eps_t\,
\right]
\,=\,
\Psi*_r\rho_0^\eps(\bx)
\,+\,
2
\left(b
\,+\,
\frac{\rho_0^\eps}{\eps}
\right)
\,+\,
\cA\,,
\]
where $\cA$ is given by
\[
\cA
\,=\,
\frac{1}{\sqrt{\eps}}
\int_{\R^4}
N
\left(
\mathcal{V}^\eps
+
\sqrt{\eps}\,v
\right)\,
\partial_v
\left(
\ln{
\pi^\eps_t
}
\right)\,
\pi^\eps_t\,
\dD \bu \,\dD \bu'\,.
\]
Since the Fisher information $I_{v,v'}$ is somehow degenerate, we can not apply directly Young's inequality to $\cA$. Hence, we re-write $\cA$ as follows
\[
\cA
\,=\,
\frac{1}{\sqrt{\eps}}
\int_{\R^4}
N
\left(
\mathcal{V}^\eps
+
\sqrt{\eps}\,v
\right)\,
\left(
\,
\partial_v
\,+\,
\partial_v'
\,\right)\,
\left(
\ln{
\pi^\eps_t
}
\right)\,
\pi^\eps_t\,
\dD \bu \,\dD \bu'\,.
\]
Then, we apply Young's inequality to $\cA$ and obtain
\[
\cA
\,\leq\,
\frac{C\eta}{\eps}\,
I_{v,v'}
\left[\,
\pi^\eps_t\,
\right]
\,+\,
C\eta\,
\int_{\R^4}
\left|
N
\left(
\mathcal{V}^\eps
+
\sqrt{\eps}\,v
\right)
\right|^2\,
\pi^\eps_t\,
\dD \bu \,\dD \bu'\,,
\]
for all $\eta$ in $]0,1[$. Applying Lemma \ref{uniqueness combined pb}, assumption \eqref{hyp2:N}, Proposition \ref{prop:2} and Corollary \ref{cor:1}, it yields
\[
\int_{\R^4}
\left|
N
\left(
\mathcal{V}^\eps
+
\sqrt{\eps}\,v
\right)
\right|^2\,
\pi^\eps_t\,
\dD \bu \,\dD \bu'
\,\leq\,C\,.
\]
Hence, taking $\eta$ small enough, we obtain 
\[
\frac{\dD}{\dD t}
H
\left[\,
\pi^\eps_t\,
\right]
\,+\,
\frac{1}{2\,\eps}\,
I_{v,v'}
\left[\,
\pi^\eps_t\,
\right]
\,\leq\, C\,,
\]
where $C$ may depend on $\eps$, $m_p$ and $\ols{m}_p$. We integrate the former inequality between $0$ and $t$ and obtain the result. 
\end{proof}


\end{document}